\newcommand{\ftn}[1]{\relax}
\renewcommand{\ftn}[1]{\footnote{#1}}
\newcommand{\ftnanup}[1]{\relax}
\renewcommand{\ftnanup}[1]{\footnote{#1}}
\newcommand{\ftnb}[1]{\relax}
\renewcommand{\ftnb}[1]{\footnote{\color{dblue}#1}}
\newcommand{\ftnr}[1]{\relax}
\renewcommand{\ftnr}[1]{\footnote{\bfseries\color{dmagenta}#1}}
\definecolor{dmagenta}{rgb}{.4,.1,.5}
\definecolor{dred}{rgb}{.6,.0,.0}
\definecolor{dgreen}{rgb}{.0,.5,.0}
\definecolor{dblue}{rgb}{.0,.0,.7}
\definecolor{violet}{rgb}{.3,.0,.9}
\definecolor{orange}{cmyk}{0,.5,.1,.0}
\definecolor{dcyan}{cmyk}{.5,.0,.0,.0}
\theoremstyle{plain}
\newtheorem{lemma}{Lemma}[section]
\newtheorem{theorem}{Theorem}[section]
\newtheorem{proposition}{Proposition}[section]
\newtheorem{corollary}{Corollary}[section]
\theoremstyle{definition}
\newtheorem{definition}{Definition}[section]
\newtheorem{condition}{Condition}[section]
\theoremstyle{remark}
\newtheorem{remark}{Remark}[section]
\newtheorem{example}{Example}[section]
\numberwithin{equation}{section}
\newcommand{\tpi}{{\widetilde{\uppi}}}
\DeclareMathOperator{\Exp}{\mathbb{E}}
\DeclareMathOperator{\Prob}{\mathbb{P}}
\newcommand{\D}{\mathrm{d}}
\newcommand{\E}{\mathrm{e}}
\newcommand{\RR}{\mathbb{R}}
\newcommand{\NN}{\mathbb{N}}
\newcommand{\fL}{\mathfrak{L}_{\alpha}}
\newcommand{\fLsym}{\mathfrak{L}^{\mathsf{sym}}_{\alpha}}
\newcommand{\R}{\mathbb{R}}
\newcommand{\I}{\mathcal{I}}
\newcommand{\dd}{\mathfrak{d}}
\newcommand{\cZ}{\mathcal{Z}}
\newcommand{\scrP}{\mathscr{P}}
\newcommand{\calA}{{\mathcal{A}}}
\newcommand{\calB}{{\mathcal{B}}}
\newcommand{\calD}{{\mathcal{D}}}
\newcommand{\calF}{{\mathcal{F}}}
\newcommand{\calH}{{\mathcal{H}}}
\newcommand{\cK}{{\mathcal{K}}}
\newcommand{\calP}{{\mathcal{P}}}
\newcommand{\calS}{{\mathcal{S}}}
\newcommand{\calV}{{\mathcal{V}}}
\newcommand{\abs}[1]{\lvert#1\rvert}
\newcommand{\norm}[1]{\lVert#1\rVert}
\newcommand{\dabs}[1]
{\bm\lvert\kern-0.50ex\bm\lvert\kern0.1ex #1
\kern0.1ex\bm\rvert\kern-0.50ex\bm\rvert}
\newcommand{\bdabs}[1]
{\bigl\lvert\kern-0.72ex\bigl\rvert\kern-0.55ex
\bigl\lvert\kern-0.72ex\bigl\rvert\kern0.1ex #1
\kern0.1ex\bigl\lvert\kern-0.72ex\bigl\rvert\kern-0.55ex
\bigl\lvert\kern-0.72ex\bigl\rvert}
\newcommand{\dbrac}[1]
{[\kern-0.45ex[\kern0.1ex #1\kern0.1ex]\kern-0.45ex]}
\newcommand{\bdbrac}[1]
{\bigl[\kern-0.75ex\bigl[\kern0.1ex #1\kern0.1ex\bigr]\kern-0.75ex\bigr]}
\newcommand{\grad}{\nabla}
\newcommand{\Rd}{\mathbb{R}^{d}}
\newcommand{\Rds}{\mathbb{R}^{d}}
\newcommand{\In}{\bm1_{\{\abs{z}\le 1\}}}
\newcommand{\babs}[1]{\bigl\lvert#1\bigr\rvert}
\newcommand{\Babs}[1]{\Bigl\lvert#1\Bigr\rvert}
\newcommand{\babss}[1]{\biggl\lvert#1\biggr\rvert}
\newcommand{\bnorm}[1]{\bigl\lVert#1\bigr\rVert}
\newcommand{\bnormm}[1]{\biggl\lVert#1\biggr\rVert}
\newcommand{\df}{:=}
\DeclareMathOperator{\dist}{dist}
\DeclareMathOperator{\diam}{diam}
\let\oldtocsection=\tocsection
\let\oldtocsubsection=\tocsubsection
\let\oldtocsubsubsection=\tocsubsubsection
\renewcommand{\tocsection}[2]{\hspace{0em}\oldtocsection{#1}{#2}}
\renewcommand{\tocsubsection}[2]{\hspace{1em}\oldtocsubsection{#1}{#2}}
\renewcommand{\tocsubsubsection}[2]{\hspace{2em}\oldtocsubsubsection{#1}{#2}}
\begin{document}
\title[On a class of stochastic differential equations
with jumps and its properties]{On a class of stochastic differential equations\\
with jumps and its properties}

\author{Ari Arapostathis}
\address{Department of Electrical and Computer Engineering,
The University of Texas at Austin, 
1 University Station, Austin, TX 78712}
\curraddr{1616 Guadalupe St.\\
UTA 7.508\\
Austin, TX~~7801}
\email{ari@ece.utexas.edu}
\thanks{Corresponding Author:  Ari Arapostathis, 1616 Guadalupe St.,
UTA~7.502, Austin, TX~~78701, ari@ece.utexas.edu, (512) 471-3265.}
\author{Anup Biswas}
\address{Department of Electrical and Computer Engineering,
The University of Texas at Austin, 
1 University Station, Austin, TX 78712}
\email{anupbiswas@utexas.edu}
\author{Luis Caffarelli}
\address{Department of Mathematics,
The University of Texas at Austin, 
1 University Station, Austin, TX 78712}
\email{caffarel@math.utexas.edu}

\date{\today}

\begin{abstract}
We study stochastic differential equations with jumps with no diffusion part.
We provide some basic stochastic characterizations of solutions of the
corresponding non-local partial differential equations and prove the Harnack
inequality for a class of these operators.
We also establish key connections between the recurrence properties of
these jump processes and the non-local partial differential operator.
One of the key results is the regularity of solutions of the Dirichlet problem
for a class of operators with weakly H\"older continuous kernels.
\end{abstract}

\subjclass[2000]{Primary 60J45, 60J75, 58F11;
Secondary 60H30}

\keywords{Stochastic differential equations with jumps,
Harnack inequality, $\alpha$-stable process,
invariant probability measure,
invariant density, exit time, Dynkin's formula, ergodicity}

\maketitle

\hrule
\tableofcontents
\vspace{-0.7cm}
\hrule

\bigskip
\section{Introduction}

Stochastic differential equations (SDEs) with jumps have received
wide attention in stochastic analysis as well as
in the theory of differential equations.
Unlike continuous diffusion processes, SDEs with jumps have long range
interactions and therefore the generators of such processes are non-local
in nature.
These processes arise in various applications, for instance, in mathematical
finance and control \cite{cont-tankov, soner}
and image processing \cite{gilboa-osher}.
There have been various studies on such processes from a stochastic analysis
viewpoint concentrating on existence, uniqueness, and stability
properties of the solution of the stochastic differential equation
\cite{abels-kassmann,bass04,chen-kim-song,chen-wang,kurtz, komatsu},
as well as from a differential equation viewpoint focusing on the existence and
regularity of viscosity solutions
\cite{barles-chas-imbert,barles-chas-imbert-11,caffarelli-silvestre-regu}.
One of our objectives in this paper is to establish stochastic representations
of solutions of SDEs with jumps via the associated
integro-differential operator. 

Let us consider a Markov process $X$ in $\Rd$ with generator $\calA$.
Let $D$ be a smooth bounded domain in $\Rd$.
We denote the first exit time of the process $X$ from $D$
by $\tau(D)=\inf\{t\ge 0: X_{t}\notin D\}$.
One can formally say that
\begin{equation}\label{ee1.1}
u(x)\;\df\;\Exp_{x}\biggl[\int_{0}^{\tau(D)}f(X_{s})\,\D{s}\biggr]
\end{equation}
satisfies the following equation
\begin{equation}\label{ee1.2}
\calA u\;=\;-f \quad \text{in}~D, \quad u\;=\;0\quad \text{in}~D^{c}\,,
\end{equation}
where $\Exp_{x}$ denotes the expectation operator on the canonical
space of the process
starting at $x$ when $t=0$.
An important question is when can we actually identify the solution of
\eqref{ee1.2} as the right hand side of \eqref{ee1.1}.
When $\calA=\Delta + b$, i.e., $X$ is a drifted Brownian notion, one can use
the regularity of the solution
and It\^{o}'s formula to establish \eqref{ee1.1}.
Clearly then, one standard method to obtain a representation of the
mean first exit time from $D$ is to find
a classical solution of \eqref{ee1.2} for non-local operators.
This is related to the work in \cite{bass-2009}
where estimates on classical solutions are obtained when $D=\Rd$.
The author in \cite{bass-2009} also raises questions
concerning the existence and regularity of solutions to the Dirichlet problem
for non-local operators.
We provide a partial answer to these questions in Theorem~\ref{T6.1}.

One of the main results of this paper is the existence of
a classical solution of \eqref{ee1.2} for a fairly general class of
non-local operators.
We focus on operators of the form
\begin{equation}\label{ee1.3}
\I f(x)\;=\; b(x)\cdot \grad f(x)
+\int_{\Rd}\dd f(x;z)\,\uppi(x,z)\,\D{z}\,,
\end{equation}
where
\begin{equation}\label{ee1.4}
\dd f(x;z)\;\df\; f(x+z)-f(x)-\In\grad{f}(x)\cdot z\,,
\end{equation}
with $\bm1_{A}$ denoting the indicator function of a set $A$.
The kernel $\uppi$ satisfies the usual integrability conditions.
When $\uppi(x,z) = \frac{k(x,z)}{\abs{z}^{d+\alpha}}$, with $\alpha\in (1,2)$,
and $b$, $k$ and $f$ are locally H\"older in $x$ with exponent $\beta$, and
$k(x,\cdot)-k(x,0)$ satisfies the integrability condition in \eqref{ee3.10},
we show in Theorem~\ref{T3.1}
that $u$ defined by \eqref{ee1.1} is the unique solution of \eqref{ee1.2} in
$C^{2s+\beta}_{\mathrm{loc}}(D)\cap C(\Rd)$.
This result can be extended to include non-zero boundary
conditions provided that the boundary data is regular enough.
The proof is based on various regularity results concerning the Dirichlet problem,
that are of independent interest and can be found in Section~\ref{S6}. 
For the case $k\equiv1$, with continuous $f$ and $g$,
we characterize the solution of
\begin{equation}\label{ee1.5}
\I u\;=\;-f \quad \text{in}~D, \quad u\;=\;g\quad \text{in}~D^{c}
\end{equation}
in the viscosity framework.
Theorem~\ref{T3.2}, which appears later in Section~\ref{sec-pde}, asserts that
\begin{equation*}
u(x)\;=\;\Exp_{x}\biggl[\int_{0}^{\tau(D)}f(X_{s})\,\D{s}
+g(X_{\tau(D)})\biggr]\,,
\quad x\in\Rd\,,
\end{equation*}
is the unique viscosity solution to \eqref{ee1.5}.
One of the hurdles in establishing this lies in
showing that $\Exp_{x}[\tau(\Bar{D})]=0$ whenever $x\in\partial D$.
When $X$ is a drifted Brownian motion, this can be easily deduced from the
fact that Brownian motion has infinitely many zeros in every finite interval.
But similar crossing properties are not known for $\alpha$-stable processes.
We also have to restrict ourselves to the regime $\alpha\in (1,2)$, so that the
jump process `dominates' the drift, and this allows us to
establish that $\Exp_{x}[\tau(\Bar{D})]=0$ whenever $x\in\partial D$.
The proof technique uses an estimate of the first exit time 
of an $\alpha$-stable process from a cone \cite{hernandez}.
These auxiliary results can be found in Section~\ref{sec-pde}.

Recall that a function $h$ is said to be \emph{harmonic} with respect to $X$ in $D$
if $h(X_{t\wedge\tau(D)})$ is a martingale.
One of the important properties of nonnegative harmonic functions for
nondegenerate continuous diffusions is the
Harnack inequality, which plays a crucial role in various regularity
and stability estimates.
The work in \cite{bass-levin} proves the Harnack inequality for a class of pure
jump processes, and this is further generalized in \cite{bass-kassmann} 
for non-symmetric kernels that may have variable order.
A parabolic Harnack inequality is obtained in \cite{Barlow-Bass-Chen-Kassmann}
for symmetric jump processes associated with the Dirichlet form with a
symmetric kernel.
In \cite{song-vondracek-04} sufficient 
conditions on Markov processes to satisfy the Harnack inequality are identified.
Let us also mention the work in \cite{ari-ghosh-04,foondun,song-vondracek-05}
where a Harnack
inequality is established for jump processes with a non-degenerate diffusion part.
Recently, \cite{kassmann-mimica} proves a Harnack type estimate for harmonic
functions that are not necessarily nonnegative in all of $\Rd$.

In this paper we prove a Harnack inequality for harmonic functions
relative to the operator $\I$ in \eqref{ee1.3} when $k$ and $b$ are locally
bounded and measurable, and either $k(x,z) = k(x,-z)$,
or $\abs{\uppi(x,z)-\uppi(x,0)}$ is a lower order kernel (Theorem~\ref{T4.1}).
The method of the proof is based on verifying the sufficient conditions in
\cite{song-vondracek-04}.
Later we use this Harnack estimate to obtain certain stability results
for the process.
Let us also mention that the estimates obtained in Section~\ref{sec-pde} and
Section~\ref{sec-harnack} may also be used to establish H\"{o}lder continuity for
harmonic functions by following a similar method as in
\cite{bass-kassmann-holder}.
However we don't pursue this here.

In Section~\ref{sec-stability} we discuss the ergodic properties
of the process such as positive recurrence, invariant probability measures, etc.
We provide a sufficient condition for positive recurrence and the existence
of an invariant probability measure.
This is done via imposing a \emph{Lyapunov stability} condition on the
generator.
Following Has$'$minski\u{\i}'s method, we establish the existence of a unique
invariant probability measure for a fairly large class of processes.
We also show that one may obtain a positive recurrent process by
using a non-symmetric kernel and no drift (see Theorem~\ref{T5.3}).
In this case, the non-symmetric part of the kernel plays the role of the drift.
Let us mention here that in \cite{wee} the author provides sufficient conditions
for stability for a class of jump diffusions and this is accomplished
by constructing suitable Lyapunov type functions.
However, the class of kernels considered in \cite{wee} satisfies a
different set of hypotheses than those assumed in this paper, and in a certain way
lies in the complement of the class of L\'evy kernels that we consider.
Stability of $1$-dimensional processes is discussed in
\cite{Wang-08} under the assumption of Lebesgue-irreducibility.
Lastly, we want to point out one of the interesting results of this paper,
which is the characterization of the mean hitting time of a bounded domain
as a viscosity solution of the exterior Dirichlet problem (Theorem~\ref{T5.4}).
This is established for the class of operators with weakly
H\"older continuous kernels in Definition~\ref{D-hkernel}.

The organization of the paper is as follows.
In Section~\ref{S1.1} we introduce the notation used in the paper.
In Section~\ref{S2} we introduce the model and assumptions.
Section~\ref{sec-pde} establishes stochastic representations
of viscosity solutions.
In Section~\ref{sec-harnack} we show the Harnack inequality.
Section~\ref{sec-stability} establishes the connections
between the recurrence properties
of the process and solutions of the non-local equations.
Finally, Section~\ref{S6} is devoted to the proof of the
regularity of solutions to the Dirichlet problem for weakly H\"older continuous
kernels.
These results are used in Section~\ref{sec-stability}.

\subsection{Notation}\label{S1.1}
The standard norm in the $d$-dimensional Euclidean space
$\Rd$ is denoted by $\abs{\,\cdot\,}$,
and we let $\Rd_{*}\df \Rd\setminus\{0\}$.
The set of non-negative real numbers is denoted by $\RR_{+}$,
$\NN$ stands for the set of natural numbers, and $\bm1_{A}$ denotes
the indicator function of a set $A$.
For vectors $a, b\in\Rd$, we
denote the scalar product by $a\cdot b$.
We denote the maximum (minimum) of two real numbers
$a$ and $b$ by $a\vee b$ ($a\wedge b$).
We let $a^{+}\df a\vee 0$ and $a^-\df (-a)\vee 0$.
By $\lfloor a\rfloor$ ($\lceil a\rceil$) we denote the largest (least) integer less
than (greater than) or equal to the real number $a$.
For $x\in\Rd$ and $r\ge 0$, we denote by $B_{r}(x)$ the open ball of radius
$r$ around $x$ in $\Rd$, while $B_{r}$ without an argument
denotes the ball of radius $r$ around the origin.
Also in the interest of simplifying the
notation we use $B\equiv B_{1}$, i.e., the unit ball centered at $0$.

Given a metric space $\calS$, we denote by $\calB(\calS)$ and
$B_{b}(\calS)$ the Borel $\sigma$-algebra of $\calS$
and the set of bounded Borel measurable functions on $\calS$, respectively.
The set of Borel probability measures on $\calS$ is denoted
by $\calP(\calS)$, $\parallel \cdot\parallel_{\mathrm{TV}}$ denotes the total
variation norm on $\calP(\calS)$, and
$\delta_{x}$ the Dirac mass at $x$.
For any function $f:\calS\to \Rd$ we define
$\norm{f}_{\infty}\df\sup_{x\in\calS}\,\abs{f(x)}$.

The closure and the boundary of a set $A\subset\Rd$ are denoted
by $\Bar{A}$ and $\partial{A}$, respectively, and
$\abs{A}$ denotes the Lebesgue measure of $A$.
We also define
\begin{equation*}
\tau(A)\;\df\;\inf\;\{s\ge 0 : X_{s}\notin A\}\,.
\end{equation*}
Therefore $\tau(A)$ denotes the first exit time of the process $X$ from $A$.
For $R>0$, we often use the abbreviated notation $\tau_{R}\df\tau(B_{R})$.

We introduce the following notation for spaces of real-valued functions on
a set $A\subset\Rd$.
The space $L^{p}(A)$, $p\in[1,\infty)$, stands for the Banach space
of (equivalence classes) of measurable functions $f$ satisfying
$\int_{A} \abs{f(x)}^{p}\,\D{x}<\infty$, and $L^{\infty}(A)$ is the
Banach space of functions that are essentially bounded in $A$.
For an integer $k\ge 0$, the space $C^{k}(A)$ ($C^{\infty}(A)$)
refers to the class of all functions whose partial
derivatives up to order $k$ (of any order) exist and are continuous,
$C_{c}^{k}(A)$ is the space of functions in $C^{k}(A)$ with
compact support, and $C_{b}^{k}(A)$ is the subspace of $C^{k}(A)$
consisting of those functions whose derivatives up to order $k$ are bounded.
Also, the space $C^{k,r}(A)$, $r\in(0,1]$, is the class of all functions
whose partial derivatives up to order $k$ are H\"older continuous of order $r$.
For simplicity we write  $C^{0,r}(A)=C^{r}(A)$.
For any $\gamma> 0$, $C^\gamma(A)$ denotes the space
$C^{\lfloor\gamma\rfloor,\gamma-\lfloor\gamma\rfloor}(A)$,
under the convention $C^{k,0}(A) = C^{k}(A)$.

In general if $\mathcal{X}$ is a space of real-valued functions on a
domain $D$,
$\mathcal{X}_{\mathrm{loc}}$ consists of all functions $f$ such that
$f\varphi\in\mathcal{X}$ for every $\varphi\in C_{c}^{\infty}(D)$.

For a nonnegative multiindex $\beta=(\beta_{1},\dotsc,\beta_{d})$,
we let $\abs{\beta}\df \beta_{1}+\dotsb+\beta_{d}$ and
$D^{\beta}\df \partial_{1}^{\beta_{1}}\dotsb
\partial_{d}^{\beta_{d}}$, where
$\partial_{i} \df\frac{\partial}{\partial x_{i}}$.

Let $D$ be a bounded domain with a $C^{2}$ boundary.
Define $d_{x} \;\df\; \text{dist}(x, \partial D)$
and $d_{xy}\df\min(d_{x}, d_{y})$.
For $u\in C(D)$ and $r\in\RR$,
we introduce the weighted norm
\begin{align*}
\dbrac{u}^{(r)}_{0;D}&\;\df\;
\sup_{x\in D}\; d_{x}^{r}\,\abs{u(x)}\,,\\
\intertext{and, for $k\in\NN$ and $\delta\in(0,1]$, the seminorms}
\dbrac{u}^{(r)}_{k;D}&\;\df\; \sup_{\abs{\beta}=k}\;
\sup_{x\in D}\; d_{x}^{k+r}\babs{D^{\beta}u(x)}\\
\dbrac{u}^{(r)}_{k,\delta;D}&\;\df\; \sup_{\abs{\beta}=k}\;
\sup_{x,y\in D}\; \biggl(d_{xy}^{k+\delta+r}\,
\frac{\babs{D^{\beta}u(x)-D^{\beta}u(y)}}{\abs{x-y}^{\delta}}\biggr)\,.
\end{align*}
For $r\in\RR$ and $\gamma\ge0$, with $\gamma+r\ge0$, we define the space
\begin{equation*}
\mathscr{C}^{(r)}_{\gamma}(D)\;\df\;
\bigl\{u\in C^{\gamma}(D)\cap C(\R^{d})\,\colon
u(x)=0~\text{for}\,x\in D^{c},
~ \dabs{u}^{(r)}_{\gamma;D}<\infty\bigr\}\,,
\end{equation*}
where
\begin{equation*}
\dabs{u}^{(r)}_{\gamma;D}\;\df\;
\sum_{k=0}^{\lceil\gamma\rceil-1}
\dbrac{u}^{(r)}_{k,D}
+ \dbrac{u}^{(r)}_{\lceil\gamma\rceil-1,\,\gamma+1-\lceil\gamma\rceil;\,D}\,,
\end{equation*}
under the convention $\dabs{u}^{(r)}_{0;D}=\dbrac{u}^{(r)}_{0;D}$.
We also use the notation $\dabs{u}^{(r)}_{k,\delta;D}
=\dabs{u}^{(r)}_{k+\delta;D}$ for $\delta\in(0,1]$.
It is straightforward to verify that $\dabs{u}^{(r)}_{\gamma;D}$
is a norm, under which $\mathscr{C}^{(r)}_{\gamma}(D)$ is a Banach space.

If the distance functions $d_x$ or $d_{xy}$ are not included in the above
definitions, we denote the corresponding seminorms by
$[\,\cdot\,]_{k;D}$ or $[\,\cdot\,]_{k,\delta;D}$ and define
\begin{equation*}
\norm{u}_{C^{k,\delta}(D)} \;\df\;
\sum_{\ell=0}^{k} [u]_{\ell;D} + [ u]_{k,\delta;D}\,.
\end{equation*}
Thus, $\norm{u}_{C^{\gamma}(D)}$ is well defined for any $\gamma>0$,
by the identification
$C^{\gamma}(D)=C^{\lfloor\gamma\rfloor,\gamma-\lfloor\gamma\rfloor}(A)$.

We recall the well known interpolation inequalities
\cite[Lemma~6.32, p.~30]{GilTru}.
Let $u\in C^{2,\beta}(D)$.
Then for any $\varepsilon$ there exists a constant $C=C(\varepsilon,j,k,r)$
such that 
\begin{equation*}
\begin{gathered}
\dbrac{u}^{(0)}_{j,\gamma;D}\;\le\; C\,\dabs{u}^{(0)}_{0;D}
+ \varepsilon\,[\![u]\!]^{(0)}_{k,\beta;D}
\\[5pt]
\dabs{u}^{(0)}_{j,\gamma;D}\;\le\; C\,\dabs{u}^{(0)}_{0;D}
+ \varepsilon\,\dbrac{u}^{(0)}_{k,\beta;D}
\end{gathered}
\qquad
j=0,1,2,\qquad 0\le\beta,\,\gamma\le 1\,,\quad
j+\gamma < k + \beta\,.
\end{equation*}

\section{Preliminaries}\label{S2}
Let $\alpha\in(1,2)$.
Let $b:\Rd\to\Rd$ and $\uppi:\Rd\times\Rd\to \R$ be two given measurable
functions where $\uppi$ is nonnegative.
We define the non-local operator $\I$ as follows:
\begin{equation}\label{ee2.1}
\I f(x)\;\df\;
b(x)\cdot\grad f(x) +\int_{\Rds}\dd f(x;z)\,
\uppi(x,z)\,\D{z}\,,
\end{equation}
with $\dd f$ as in \eqref{ee1.4}.
We always assume that
\begin{equation*}
\int_{\Rds}(\abs{z}^{2}\wedge 1)\,\uppi(x,z)\,\D{z}\;<\;\infty
\qquad \forall\,x\in\Rd\,.
\end{equation*}
Note that \eqref{ee2.1} is well-defined for any $f\in C^{2}_{b}(\Rd)$.
Let $\Omega=\calD([0, \infty), \Rd)$ denote the space of all right continuous
functions mapping $[0,\infty)$ to $\Rd$, having finite left limits
(c\'{a}dl\'{a}g).
Define $X_{t}=\omega(t)$ for $\omega\in\Omega$ and let $\{\calF_{t}\}$ be the
right-continuous filtration generated by the process $\{X_{t}\}$.
In this paper we always assume that given any initial distribution $\nu_{0}$
there exists a strong Markov process $(X, \Prob_{\nu_{0}})$ that satisfies
the martingale problem corresponding to $\I$,
i.e., $\Prob_{\nu_{0}}(X_{0}\in A)=\nu_{0}(A)$ for all $A\in\calB(\Rd)$ and 
for any $f\in C^{2}_{b}(\Rd)$, 
\begin{equation*}
f(X_{t})-f(X_{0})-\int_{0}^{t}\I f(X_{s})\,\D{s}
\end{equation*}
is a martingale with respect to the filtration $\{\calF_{t}\}$.
We denote the law of the process by $\Prob_{x}$ when $\nu_{0}=\delta_{x}$.
Sufficient conditions on $b$ and $\uppi$ to ensure
the existence of such processes are available in the literature.
Unfortunately, the available sufficient conditions do not cover a wide class
of processes.
We refer the reader to \cite{bass04} for the available results in this direction,
as well as to \cite{applebaum, chen-kim-song, chen-wang, komatsu, kurtz}.
When $b\equiv 0$, well-posedness of the martingale problem is obtained
under some regularity assumptions on $\uppi$ in \cite{abels-kassmann}.

Let us mention once more that our goal here is not to study the existence
of a solution to the martingale problem.
Therefore, we do not assume any regularity conditions on the coefficients,
unless otherwise stated.
Before we proceed to state our assumptions
and results, we recall the L\'{e}vy-system formula, the proof of which is a
straightforward adaptation of the proof for a purely non-local operator
and can be found in \cite[Proposition 2.3 and Remark 2.4]{bass-levin}
(see also \cite{chen-kim-song, foondun}).

\begin{proposition}\label{levy-system}
If $A$ and $B$ are disjoint Borel sets in $\calB(\Rd)$, then for any $x\in\Rd$,
\begin{equation*}
\sum_{s\le t}\bm1_{\{X_{s-}\in A,\, X_{s}\in B\}}
-\int_{0}^{t}\int_{B} \bm1_{\{X_{s}\in A\}}
\uppi(X_{s}, z-X_{s})\,\D{z}\,\D{s}
\end{equation*}
is a $\Prob_{x}$-martingale.
\end{proposition}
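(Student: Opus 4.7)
The plan is to derive the statement from the martingale problem in the standard way, following Bass--Levin. Since $\I$ contains a drift term $b\cdot\grad$ in addition to the jump part, one has to verify that this drift does not introduce spurious contributions to the jump-sum; in all other respects the argument is exactly that of the purely non-local case.

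First, I would establish the basic Lévy-system identity for functions of the jump size alone. Fix $h\in C_b(\Rd)$ vanishing in a neighborhood of $0$, and apply the martingale problem to suitable $C^2_b$ test functions that isolate large jumps. A standard approximation and localization argument yields that
\begin{equation*}
N^h_t\;\df\;\sum_{s\le t} h(X_s-X_{s-}) - \int_0^t\!\int_{\Rd} h(z)\,\uppi(X_s,z)\,\D z\,\D s
\end{equation*}
is a $\Prob_x$-martingale. The drift $b\cdot\grad$ plays no role in $N^h$ because $h$ vanishes near $0$ and the sample paths have only countably many jumps, so the absolutely continuous drift contribution in $f(X_t)$ matches the corresponding non-jump piece of $\I f(X_s)\,\D s$ and cancels.

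Second, I would extend this identity to integrands that are also predictable functions of $X_{s-}$. Using optional sampling of $N^h$ at hitting and exit times of closed sets, combined with a monotone class argument, I obtain that for any bounded Borel $F\colon\Rd\times\Rd\to\RR$ with $F(\,\cdot\,,z)\equiv 0$ on a neighborhood of $z=0$,
\begin{equation*}
\sum_{s\le t} F(X_{s-},\,X_s-X_{s-}) - \int_0^t\!\int_{\Rd} F(X_s,z)\,\uppi(X_s,z)\,\D z\,\D s
\end{equation*}
is a $\Prob_x$-martingale. Note that $X_{s-}=X_s$ outside the (countable) set of jump times, which is why $X_s$ rather than $X_{s-}$ appears inside the compensator. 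Finally, I apply this with $F(x,z)\df\bm1_A(x)\,\bm1_B(x+z)$. Since $A\cap B=\emptyset$ one has $F(x,0)=0$, so $F$ vanishes near the diagonal and the previous step applies. The jump-sum becomes $\sum_{s\le t}\bm1_{\{X_{s-}\in A,\,X_s\in B\}}$, while after the substitution $y=X_s+z$ the compensator equals $\int_0^t\bm1_{\{X_s\in A\}}\int_B\uppi(X_s,y-X_s)\,\D y\,\D s$, which is exactly the expression in the statement.

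The main obstacle is the transition from the smooth $C^2_b$ test functions available in the martingale problem to integrands involving Borel indicators supported away from $\{z=0\}$. This requires a careful truncation near the origin (where $\uppi$ may have a non-integrable singularity) followed by a monotone class argument to pass from continuous approximants to indicator functions. For pure drift-plus-jump generators both steps are entirely analogous to the symmetric-kernel case in \cite{bass-levin}, which is why this proposition is described as a straightforward adaptation.
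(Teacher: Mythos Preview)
Your approach matches the paper's: no proof is given there, only a reference to \cite[Proposition~2.3 and Remark~2.4]{bass-levin}, and your outline follows that source. The remark that the drift $b\cdot\grad$ is absorbed into the absolutely continuous part and leaves the jump compensator unchanged is precisely the adaptation the paper alludes to.

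There is, however, a gap in your final step. From $A\cap B=\emptyset$ you conclude that $F(x,z)=\bm1_A(x)\,\bm1_B(x+z)$ ``vanishes near the diagonal'' and hence falls under your second step. But disjointness only gives $F(x,0)=0$ pointwise; your second step requires $F(\,\cdot\,,z)\equiv 0$ for all $\abs{z}<\epsilon$, which holds only when $\dist(A,B)>0$. If $A$ and $B$ are disjoint but touch (say $A=(-\infty,0]$, $B=(0,\infty)$ in one dimension), arbitrarily small jumps carry the process from $A$ to $B$ and your hypothesis fails. The standard fix is to first prove the identity for $A$, $B$ at positive distance---where your argument is valid---and then approximate a general $B$ by $B_n=B\cap\{y:\dist(y,A)>1/n\}$ and pass to the limit. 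This monotone limit yields the equality of expectations for nonnegative integrands (which is in fact all the paper ever uses in Lemmas~\ref{L3.7}, \ref{L4.2}, \ref{L5.2} and Theorem~\ref{T5.1}); recovering an honest martingale, as stated, additionally requires checking that the compensator is integrable, which need not hold for kernels with singularity $\abs{z}^{-d-\alpha}$ when $\dist(A,B)=0$.
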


\section{Probabilistic representations of solutions of
non-local PDE}\label{sec-pde}
The aim in this section is to give a rigorous mathematical justification
of the connections between stochastic differential equations with jumps and
viscosity solutions to associated non-local differential equations.

Recall the generator in \eqref{ee2.1} 
where $f$ is in $C^{2}_{b}(\R^{d})$. 
We also recall the definition of a viscosity solution
\cite{barles-chas-imbert, caffarelli-silvestre-regu}.

\begin{definition}\label{D-visc}
Let $D$ be a domain with $C^{2}$ boundary.
A function $u:\Rd\to\R$ which is upper (lower)
semi-continuous on $\Bar{D}$ is said to
be a sub-solution (super-solution) to
\begin{align*}
\I u& \;=\; -f \quad \text{in}~D\,, \\[3pt]
u& \;=\; g \quad \text{in}~D^{c}\,,
\end{align*}
where $\I$ is given by \eqref{ee2.1},
if for any $x\in\Bar{D}$ and a function $\varphi\in C^{2}(\Rd)$ such that
$\varphi(x)=u(x)$ and
$\varphi(z)> u(z)$ $\bigl(\varphi(z)< u(z)\bigr)$ on $\Rd\setminus\{x\}$,
it holds that
\begin{equation*}
\I\varphi(x)\ge -f(x) \quad\bigl(\I\varphi(x)\le -f(x) \bigr)\,,
\quad\text{if}~x\in D\,,
\end{equation*}
while, if $x\in\partial D$, then
\begin{equation*}
\max\;(\I\varphi(x)+f(x), g(x)-u(x))\ge 0
\quad\bigl(\min\;(\I\varphi(x)+f(x), g(x)-u(x))\le 0\bigr)\,.
\end{equation*}
A function
$u$ is said to be a viscosity solution if it is both a sub- and a super-solution.
\end{definition}

In Definition~\ref{D-visc}
we may assume that $\varphi$ is bounded, provided $u$ is bounded.
Otherwise, we may modify the function $\varphi$ by
replacing it with $u$ outside a small ball around $x$.
It is evident that every classical solution is also a viscosity solution.

Let $f$ and $g$ be two continuous functions on $\Rd$, with $g$ bounded.
Given a bounded domain $D$, we let
\begin{equation}\label{E3.1}
u(x)\;=\;\Exp_{x}\biggl[\int_{0}^{\tau(D)}f(X_{s})\,\D{s}+g(X_{\tau(D)})\biggr]
\quad \text{for}~x\in\Rd\,,
\end{equation}
where $\Exp_{x}$ denotes the expectation operator relative to $\Prob_{x}$.
In this section we characterize $u$ as
a solution of a non-local differential equation.
As usual, we say that $b$ is locally bounded, if for any compact set $K$,
$\sup_{x\in K}\;\abs{b(x)}<\infty$.

\subsection{Three lemmas concerning operators with measurable kernels}

\begin{lemma}\label{L3.1}
Let $D$ be a bounded domain.
Suppose $X$ is a strong Markov process associated
with $\I$ in \eqref{ee2.1}, with $b$ locally bounded,
and that the integrability conditions
\begin{equation}\label{ee3.2}
\sup_{x\in K}\;\int_{\{\abs{z}>1\}}\abs{z}\,\uppi(x,z)\,\D{z}\;<\;\infty\,,
\quad\text{and}\quad \inf_{x\in K}\;\int_{\Rds}\abs{z}^{2}\uppi(x,z)\,\D{z}
\;=\;\infty
\end{equation}
hold for any compact set $K$.
Then
$\sup_{x\in D}\Exp_{x}[(\tau(D))^{m}]<\infty$, for any positive integer $m$.
\end{lemma}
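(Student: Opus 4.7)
The plan proceeds in two stages: (i) bound the first moment $\sup_{x\in D}\Exp_{x}[\tau(D)]$ via a Lyapunov function for $\I$, and (ii) bootstrap to every polynomial moment using Markov's inequality together with the strong Markov property.

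For (i), I aim to produce a nonnegative $V\in C^{2}_{b}(\Rd)$ satisfying $\I V(x)\le-1$ on a neighborhood of $\bar D$. Once such $V$ is in hand, Dynkin's formula applied at the bounded stopping time $\tau(D)\wedge T$ reads
\begin{equation*}
V(x)\;=\;\Exp_{x}\bigl[V(X_{\tau(D)\wedge T})\bigr]-\Exp_{x}\biggl[\int_{0}^{\tau(D)\wedge T}\I V(X_{s})\,\D s\biggr]\,,
\end{equation*}
which (since $V\ge 0$ and $\I V\le-1$ on $D$) gives $\Exp_{x}[\tau(D)\wedge T]\le V(x)-\Exp_{x}[V(X_{\tau(D)\wedge T})]\le\|V\|_{\infty}$. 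Monotone convergence as $T\to\infty$ then yields $\sup_{x\in D}\Exp_{x}[\tau(D)]\le\|V\|_{\infty}<\infty$.

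For the construction of $V$, I would choose a smooth, nonnegative, bounded modification of $x\mapsto C-|x-x_{0}|^{2}$, with $x_{0}\in D$ and $C$ large, that is strictly concave on a large neighborhood of $\bar D$ and is cut off smoothly to a constant outside. On $\bar D$, the drift part $b\cdot\grad V$ is uniformly bounded because $b$ is locally bounded and $D$ is compact. In the concavity region, the Taylor identity $\dd V(x;z)=\int_{0}^{1}(1-t)\,z^{\top}D^{2}V(x+tz)\,z\,\D t$ yields an integrand bounded above by $-|z|^{2}$ for jumps whose line segment stays in the region, producing a negative contribution of the form $-c\int_{|z|\le r_{0}}|z|^{2}\,\pi(x,z)\,\D z$. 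The remaining contribution from jumps of size $|z|>1$ is bounded in magnitude by $\|V\|_{\infty}\sup_{x\in K}\int_{|z|>1}\pi(x,z)\,\D z$, which is finite thanks to $\sup_{x\in K}\int_{|z|>1}|z|\pi(x,z)\,\D z<\infty$. The hypothesis $\inf_{x\in K}\int|z|^{2}\pi(x,z)\,\D z=\infty$ is then invoked to supply the divergent negative contribution needed to dominate the bounded positive terms uniformly on $\bar D$, allowing the parameters in $V$ to be tuned so that $\I V\le-1$ on a neighborhood of $\bar D$.

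For (ii), set $M\df\sup_{x\in D}\Exp_{x}[\tau(D)]<\infty$. Markov's inequality gives $\Prob_{x}(\tau(D)>2M)\le\tfrac12$ uniformly in $x\in D$. Iterating the strong Markov property at the deterministic time $2M$ yields
\begin{equation*}
\Prob_{x}\bigl(\tau(D)>2Mn\bigr)\;\le\;\sup_{y\in D}\Prob_{y}\bigl(\tau(D)>2M\bigr)\cdot\Prob_{x}\bigl(\tau(D)>2M(n-1)\bigr)\;\le\;2^{-n}
\end{equation*}
for every $n\in\NN$, so $\tau(D)$ has exponential tails uniformly in $x\in D$. Integrating these tails delivers $\sup_{x\in D}\Exp_{x}[(\tau(D))^{m}]<\infty$ for every $m\in\NN$.

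The main obstacle is the Lyapunov construction in step (i): making the negative non-local contribution uniformly dominate the drift and large-jump contributions on $\bar D$. The infinite-second-moment hypothesis is precisely the quantitative input that makes this possible, but rendering the domination uniform on $\bar D$ requires careful bookkeeping of how $\|V\|_{\infty}$ and $\|\grad V\|_{\infty}$ enter the positive side of the estimate.
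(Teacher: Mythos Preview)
Your two-stage plan is sound, and step~(ii) is both correct and cleaner than the paper's route. The paper bootstraps by induction, writing
\begin{equation*}
\Exp_{x}[\tau^{m+1}]\;=\;(m+1)\int_{0}^{\infty}\Exp_{x}\bigl[(\tau-t)^{m}\bm1_{\{t<\tau\}}\bigr]\,\D t
\end{equation*}
and conditioning on $\calF_{t\wedge\tau}$ to pull out a factor $\sup_{y\in D}\Exp_{y}[\tau^{m}]$. Your argument---Markov's inequality plus iteration of the Markov property at deterministic multiples of $2M$---yields geometric tails and hence all polynomial moments at once; in fact it gives an exponential moment. Both approaches rest entirely on the uniform first-moment bound from step~(i).

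Step~(i), however, has the gap you yourself flag, and it is a real obstruction rather than mere bookkeeping. Your crude estimate $\abs{\dd V(x;z)}\le 2\norm{V}_{\infty}$ for $\abs{z}>1$ is too lossy. To make the negative contribution $-c\int_{\abs{z}\le r_{0}}\abs{z}^{2}\uppi(x,z)\,\D z$ large you must enlarge the concave region, say to radius $R$; but then $\norm{V}_{\infty}$ grows like $R^{2}$, while for, e.g., $\uppi(x,z)=\abs{z}^{-d-\alpha}$ with $\alpha\in(1,2)$ the negative term grows only like $R^{2-\alpha}$. The two sides do not decouple in the way your sketch assumes.

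The paper's resolution (run there with the convex $f(x)=\abs{x}^{2}$, capped at $8R^{2}$ outside $B_{2R}$, to show $\I f\ge 1$ on $D$; the concave dual works identically) is to split the jump integral at the \emph{variable} radius $R$ rather than at $1$. For $\abs{z}\ge R>2\diam(D)$ and $x\in D$ one has $\abs{x+z}\ge\abs{x}$, so radial monotonicity makes this tail contribution have the favourable sign and it is simply dropped. What remains is (a) the exact identity $\int_{B_{R}}\bigl(f(x+z)-f(x)-\grad f(x)\cdot z\bigr)\uppi(x,z)\,\D z=\int_{B_{R}}\abs{z}^{2}\uppi(x,z)\,\D z$ on the quadratic region, and (b) the gradient correction $\int_{1<\abs{z}\le R}\grad f(x)\cdot z\,\uppi(x,z)\,\D z$, whose magnitude is at most $2\diam(D)\int_{\abs{z}>1}\abs{z}\,\uppi(x,z)\,\D z$---a constant \emph{independent of $R$}, because $\abs{\grad f(x)}=2\abs{x}\le 2\diam(D)$ for $x\in D$ no matter how large $R$ is. Now the divergence of $\int_{B_{R}}\abs{z}^{2}\uppi(x,z)\,\D z$ as $R\to\infty$ does deliver the domination. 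The missing idea in your sketch is that the positive error should be controlled by $\abs{\grad V}$ restricted to $D$, not by $\norm{V}_{\infty}$.
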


\begin{proof}
Without loss of generality we assume that $0\in D$.
Otherwise we inflate the domain to include $0$.
Let $\Bar d=\diam(D)$ and $M_{D}=\sup_{x\in D}\,\abs{b(x)}$.
Recall that $B_{R}$ denotes the ball of radius $R$ around the origin.
We choose $R>1\vee 2(\Bar d\vee M_{D})$, and large enough so as to satisfy
the inequality
\begin{equation*}
\inf_{x\in D}\;\int_{B_{R}}\abs{z}^{2}\,\uppi(x,z)\,\D{z}
\;>\; 1+ 2\Bar d M_{D}+2\Bar d\sup_{x\in D}\;
\int_{\{1< \abs{z}\le R\}} \abs{z}\, \uppi(x,z)\,\D{z}\,.
\end{equation*} 
We let $f\in C^{2}_{b}(\Rd)$ be a radially increasing
function such that $f(x)=\abs{x}^{2}$ for $\abs{x}\le 2R$ and
$f(x)=8R^{2}$ for $\abs{x}\ge 2R+1$.
Then, for any $x\in D$, we have
\begin{align*}
\I f (x) &\;=\;b(x)\cdot\grad f(x)
+\int_{\Rds}\dd f(x;z)\,\uppi(x,z)\,\D{z}\\
&\;\ge\; -2\Bar d M_{D}
+ \int_{B_{R}}\bigl(f(x+z)-f(x)-\grad f(x)\cdot z\bigr)\,\uppi(x,z)\,\D{z}\\
&\mspace{50mu} +\int_{\{1< \abs{z}\le R\}}\grad f(x)\cdot z\;\uppi(x,z)\,\D{z}
+\int_{B_{R}^{c}}\bigl(f(x+z)-f(x)\bigr)\,\uppi(x,z)\,\D{z}\,.
\end{align*}
Also, for any $\abs{z}\ge R$, it holds that $\abs{x+z}\ge \Bar d\ge \abs{x}$.
Therefore $f(x+z)\ge f(x)$.
Hence 
\begin{align*}
\I f(x) &\;\ge\; -2\Bar d M_{D} +\int_{\{1< \abs{z}\le R\}}\grad f(x)\cdot z\;
\uppi(x,z)\,\D{z}\\
&\mspace{200mu}+ \int_{B_{R}}\bigl(f(x+z)-f(x)-\grad f(x)\cdot z\bigr)\,
\uppi(x,z)\,\D{z}\\
&\;\ge\; -2\Bar d M_{D} -2\Bar d\int_{\{1<\abs{z}\le R\}} \abs{z}\,
\uppi(x,z)\,\D{z}
+\int_{B_{R}}\abs{z}^{2}\,\uppi(x,z)\,\D{z}\\
&\;\ge\; 1\,.
\end{align*}
Thus
\begin{align*}
\Exp_{x}[f(X_{\tau(D)\wedge t})]-f(x)
&\;=\; \Exp_{x}\biggl[\int_{0}^{\tau(D)\wedge t}\I f(X_{s})\,\D{s}\biggr]\\[5pt]
&\;\ge\; \Exp_{x}[\tau(D)\wedge t]\qquad\forall\,x\in D\,.
\end{align*}
Letting $t\to\infty$ we obtain $\Exp_{x}[\tau(D)]\le 8R^{2}$.
Since $x\in D$ is arbitrary, this shows that
$$\sup_{x\in D}\;\Exp_{x}[\tau(D)]\le 8R^{2}\,.$$

We continue by using the method of induction.
We have proved the result for $m=1$.
Assume that it is true for $m$, i.e., 
$M_{m}\df\sup_{x\in D}\Exp_{x}[(\tau(D))^{m}]<\infty$. 
Let $h(x)= M_{m}f(x)$ where $f$ is defined above.
Then from the calculations above we obtain
\begin{equation}\label{ee3.3}
\Exp_{x}[h(X_{\tau(D)\wedge t})]-h(x)\;\ge\; \Exp_{x}[M_{m}(\tau(D)\wedge t)]
\qquad\forall\,x\in D\,.
\end{equation}
Denoting $\tau(D)$ by $\tau$ we have
\begin{align*}
\Exp_{x}[\tau^{m+1}]
&\;=\;\Exp_{x}\biggl[\int_{0}^{\infty} (m+1)(\tau-t)^{m}\,
\bm1_{\{t<\tau\}}\,\D{t}\biggr]
\\[5pt]
&\;=\; \Exp_{x}\biggl[\int_{0}^{\infty} (m+1)
\Exp_{x}\bigl[(\tau-t)^{m}\,
\bm1_{\{t<\tau\}}\bigm|\calF_{t\wedge\tau}\bigr]\,\D{t}\biggr]
\\[5pt]
&\;=\; \Exp_{x}\biggl[\int_{0}^{\infty} (m+1) \bm1_{\{t\wedge \tau<\tau\}}
\Exp_{X_{t\wedge \tau}}[\tau^{m}]\,\D{t}\biggr]
\\[5pt]
&\;\le\; \sup_{x\in D}\;\Exp_{x}[\tau^{m}]\,
\Exp_{x}\biggl[\int_{0}^{\infty} (m+1) \bm1_{\{t\wedge \tau<\tau\}}\,\D{t}\biggr]
\\[5pt]
&\;\le\; M_{m}(m+1)\,\Exp_{x}[\tau]\,,
\end{align*}
and in view of \eqref{ee3.3}, the proof is complete.
\end{proof}

Boundedness of solutions to the Dirichlet problem 
on bounded domains and with zero boundary data is asserted in the following lemma.

\begin{lemma}\label{L3.2}
Let $b$ and $f$ be locally bounded functions and $D$ a bounded domain.
Suppose $\uppi$ satisfies \eqref{ee3.2}.
Then there exists a constant $C$, depending on
$\diam(D)$, $\sup_{x\in D}\,\abs{b(x)}$ and $\uppi$,
such that any viscosity solution $u$ to the equation
\begin{align*}
\I u &\;=\;f \quad \text{in}~D\,,\\
u&\;=\;0 \quad \text{in}~D^{c}\,,
\end{align*} 
satisfies $\norm{u}_{\infty}\le C\, \sup_{x\in D}\,\abs{f(x)}$.
\end{lemma}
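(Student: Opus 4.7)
The plan is to establish the bound by viscosity comparison against the $C^{2}_{b}$ barrier $\Phi$ constructed implicitly in the proof of Lemma~\ref{L3.1}, which satisfies $0\le\Phi\le 8R^{2}$ on $\Rd$ and $\I\Phi\ge 1$ in $D$, with $R$ depending only on $\diam(D)$, $\sup_{D}\abs{b}$, and $\uppi$ (through the integrability conditions~\eqref{ee3.2}). Inspection of the calculation in Lemma~\ref{L3.1} reveals that the constant ``$1$'' on the right-hand side is arbitrary: by enlarging $R$ slightly one may arrange $\I\Phi\ge 2$ throughout $D$, and it is this strict slack that drives the comparison below. Setting $M\df\sup_{x\in D}\abs{f(x)}$ and $w(x)\df 8R^{2}M-M\Phi(x)$ produces $w\in C^{2}_{b}(\Rd)$ with $0\le w\le 8R^{2}M$; since $\I$ annihilates constants, one gets the classical pointwise bound $\I w=-M\,\I\Phi\le -2M$ in $D$.

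I claim $-w\le u\le w$ on $\Rd$, which yields the lemma with $C=8R^{2}$. To prove the upper inequality, I argue by contradiction. Since $u$ is upper semi-continuous on $\Bar D$ and $u-w\le 0$ on $D^{c}$, if $\sup_{\Bar D}(u-w)$ were strictly positive, it would be attained at some interior point $x_{0}\in D$. Fix an auxiliary $\eta\in C^{2}_{b}(\Rd)$ with $\eta(0)=0$, $\grad\eta(0)=0$, and $\eta(z)>0$ for $z\ne 0$ (for instance $\eta(z)=1-\E^{-\abs{z}^{2}}$), and for $\epsilon>0$ test with $\varphi_{\epsilon}(x)\df w(x)+\bigl(u(x_{0})-w(x_{0})\bigr)+\epsilon\eta(x-x_{0})$. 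Then $\varphi_{\epsilon}\in C^{2}(\Rd)$, $\varphi_{\epsilon}(x_{0})=u(x_{0})$, and $\varphi_{\epsilon}(x)>u(x)$ for every $x\ne x_{0}$, so the sub-solution property forces $\I\varphi_{\epsilon}(x_{0})\ge f(x_{0})\ge -M$. A direct computation, using $\grad\eta(0)=0$ to kill the drift and the compensator, yields
\begin{equation*}
\I\varphi_{\epsilon}(x_{0})\;=\;\I w(x_{0})+\epsilon\int_{\Rds}\eta(z)\,\uppi(x_{0},z)\,\D z\;\le\;-2M+\epsilon\,C_{\eta}\,,
\end{equation*}
where $C_{\eta}<\infty$ is independent of $x_{0}\in D$: the bound $\eta=O(\abs{z}^{2})$ near the origin handles the singular part via the integrability of $(\abs{z}^{2}\wedge 1)\uppi$, and boundedness of $\eta$ together with~\eqref{ee3.2} controls the tail. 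Choosing $\epsilon<M/C_{\eta}$ produces the required contradiction (the trivial case $M=0$ is recovered by applying the same argument with $M$ replaced by $\delta>0$ and letting $\delta\downarrow 0$). A completely symmetric argument using the super-solution inequality gives $-u\le w$, and combining the two yields $\norm{u}_{\infty}\le 8R^{2}M$.

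The main obstacle is the viscosity comparison step: one must construct a $C^{2}$ test function that touches $u$ from the correct side strictly away from the contact point while keeping the action of $\I$ on the perturbation $\eta$ controlled uniformly in $x_{0}$. The strengthened inequality $\I\Phi\ge 2$ (rather than merely $\ge 1$) is precisely the slack that survives the $\epsilon$-perturbation and produces the contradiction; this upgrade is essentially free from the construction in Lemma~\ref{L3.1}, since the constant on the right-hand side there was only an arbitrary target.
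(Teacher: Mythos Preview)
Your proof is correct and follows essentially the same sliding-barrier strategy as the paper: both build the $C^{2}_{b}$ supersolution from Lemma~\ref{L3.1} and derive a contradiction at a touching point via the viscosity subsolution inequality, the only cosmetic difference being that you explicitly add the $\epsilon\eta$ perturbation to enforce the strict touching required by Definition~\ref{D-visc}, whereas the paper applies the test-function inequality directly to $M-\xi$. One small note: uniformity of $C_{\eta}$ over $x_{0}\in D$ is not guaranteed by the stated hypotheses (only pointwise integrability of $(\abs{z}^{2}\wedge1)\uppi(x,\cdot)$ is assumed), but your argument only needs finiteness at the fixed contact point $x_{0}$, so this does not affect validity.
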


\begin{proof}
As shown in the proof of Lemma~\ref{L3.1},
there exists a nonnegative, radially nondecreasing
function $\xi\in C^{2}_{b}(\Rd)$ satisfying
$\I\xi(x)>\sup_{x\in D}\,\abs{f(x)}$ for all $x\in \bar{D}$.
Let $M>0$ be the smallest number such that $M-\xi$
touches $u$ from above at least at one point.
We claim that $M\le \norm{\xi}_{\infty}$.
If not, then $M-\xi(x)>0$ for all $x\in D^{c}$.
Therefore $M-\xi$ touches $u$ in $D$ from above.
Hence by the definition of a viscosity solution we have
$\I (M-\xi(x))\ge f(x)$, or equivalently,
$\I \xi(x)\le - f(x)$, where $x\in D$ is a point of contact from above.
But this contradicts the definition of $\xi$.
Thus $M\le \norm{\xi}_{\infty}$.
Also by the definition of $M$ we have
\begin{equation*}
\sup_{x\in D}\;u(x)\;\le\; \sup_{x\in D}\;(M-\xi(x))\;\le\; M
\;\le\; \norm{\xi}_{\infty}\,.
\end{equation*}
The result then follows by applying the same argument to $-u$.
\end{proof}

\begin{definition}\label{D3.2}
Let $\fL$ denote the class of operators $\I$ of the form
\begin{equation}\label{gen-op}
\I f(x)\;\df\;b(x)\cdot\grad f(x)+\int_{\Rds}
\dd f(x;z)\,
\frac{k(x,z)}{\abs{z}^{d+\alpha}}\,\D{z}\,,\quad f\in C^{2}_{b}(\Rd)\,,
\end{equation}
with $b:\Rd\to\Rd$ and $k:\Rd\times\Rd\to(0,\infty)$ Borel measurable and
locally bounded, and $\alpha\in(1,2)$.
We also assume that $x\mapsto\sup_{z\in\Rd}\,k^{-1}(x,z)$ is locally bounded.
The subclass of $\fL$ consisting of those $\I$ satisfying
$k(x,z)=k(x,-z)$ is denoted by $\fLsym$.
\end{definition}

Consider the following \emph{growth condition}:
There exists a constant $K_{0}$ such that
\begin{equation}\label{E3.5}
x\cdot b(x)\,\vee\,\abs{x}\,k(x,z)\;\le\; K_{0}\,(1+\abs{x}^{2})\qquad
\forall\, x, z\in\Rd\,.
\end{equation}
It turns out that under \eqref{E3.5}, the Markov process
associated with $\I$
does not have finite explosion time, as the following lemma shows.

\begin{lemma}\label{L3.3}
Let $\I\in\fL$ and
suppose that for some constant $K_{0}>0$, the data satisfies
the growth condition in \eqref{E3.5}.
Let $X$ be a Markov process associated with $\I$.
Then 
\begin{equation*}
\Prob_{x}\,\biggl(\sup_{s\in[0, T]}\,\abs{X_{s}}<\infty\biggr)\;=\;1
\qquad\forall\, T\,>\,0\,.
\end{equation*}
\end{lemma}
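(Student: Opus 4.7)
The natural approach is a Lyapunov argument combined with the martingale problem. The plan is to produce a smooth function $V\colon\Rd\to[1,\infty)$ with $V(x)\to\infty$ as $\abs{x}\to\infty$, and a constant $c$ depending on $K_{0}$, $\alpha$, $d$, and the local data of $b$ and $k$, such that $\I V(x)\le c\,V(x)$. Granted this, the martingale property applied to a $C^{2}_{b}$ cutoff of $V$, optional stopping, and Gronwall's inequality will yield $\Exp_{x}V(X_{t\wedge\tau_{R}})\le V(x)\,\E^{ct}$ for every $R>0$; Markov's inequality then forces $\Prob_{x}(\tau_{R}\le T)\le V(x)\,\E^{cT}/V(R)\to 0$ as $R\to\infty$, which is equivalent to the stated non-explosion on $[0,T]$.

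My candidate Lyapunov is $V(x)\df(1+\abs{x}^{2})^{1/2}$, which is convex, has Lipschitz constant $1$, and satisfies $\|D^{2}V(x)\|_{\mathrm{op}}\le 1/V(x)\le 1$. The drift term is immediately controlled via \eqref{E3.5}: $b(x)\cdot\grad V(x)=b(x)\cdot x/V(x)\le K_{0}V(x)$. For the non-local term I would split the integrand at $\abs{z}=1$. For $\abs{z}\le 1$, convexity gives $0\le\dd V(x;z)\le\tfrac{1}{2}\abs{z}^{2}$, and $\int_{\{\abs{z}\le 1\}}\abs{z}^{2-d-\alpha}\,\D{z}<\infty$ because $\alpha<2$. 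For $\abs{z}>1$, the Lipschitz bound gives $\abs{\dd V(x;z)}\le\abs{z}$, and $\int_{\{\abs{z}>1\}}\abs{z}^{1-d-\alpha}\,\D{z}<\infty$ because $\alpha>1$. Both contributions are thus controlled by a constant multiple of $\sup_{z}k(x,z)$, and \eqref{E3.5} provides the crucial uniform-in-$z$ bound $\sup_{z}k(x,z)\le K_{0}(\abs{x}+\abs{x}^{-1})\le 2K_{0}V(x)$ on $\{\abs{x}\ge 1\}$. On a fixed neighborhood of the origin, the matching estimate follows from local boundedness of $b$ and $k$ built into the definition of $\fL$, and is absorbed into $c$ using $V\ge 1$.

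Since $V\notin C^{2}_{b}(\Rd)$, the martingale problem does not apply to $V$ directly. For each $R\ge 1$ I would introduce a cutoff $V_{R}\in C^{2}_{b}(\Rd)$ that is radially nondecreasing, equals $V$ on $B_{2R}$, and satisfies $0\le V_{R}\le V$ everywhere. For $x\in B_{R}$ one has $\grad V_{R}(x)=\grad V(x)$, and since $\abs{x+z}\le R+1\le 2R$ whenever $\abs{z}\le 1$, it follows that $\dd V_{R}(x;z)=\dd V(x;z)$ for $\abs{z}\le 1$. For $\abs{z}>1$ the inequality $V_{R}(x+z)\le V(x+z)$ yields $\dd V_{R}(x;z)\le\dd V(x;z)$. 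Hence $\I V_{R}(x)\le\I V(x)\le cV(x)=cV_{R}(x)$ for every $x\in B_{R}$.

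Optional stopping of the martingale $V_{R}(X_{t})-V_{R}(X_{0})-\int_{0}^{t}\I V_{R}(X_{s})\,\D{s}$ at the bounded time $t\wedge\tau_{R}$ then yields
\begin{equation*}
\Exp_{x}V_{R}(X_{t\wedge\tau_{R}})\;\le\;V(x)+c\int_{0}^{t}\Exp_{x}V_{R}(X_{s\wedge\tau_{R}})\,\D{s}\,,
\end{equation*}
so Gronwall's inequality gives $\Exp_{x}V_{R}(X_{t\wedge\tau_{R}})\le V(x)\E^{ct}$. On $\{\tau_{R}\le T\}$ the process has exited $B_{R}$, so $\abs{X_{\tau_{R}}}\ge R$; radial monotonicity of $V_{R}$ together with $V_{R}=V$ on $B_{2R}$ gives $V_{R}(X_{\tau_{R}})\ge V(R)$. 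Markov's inequality then produces $\Prob_{x}(\tau_{R}\le T)\le V(x)\E^{cT}/V(R)\to 0$, completing the proof. The main obstacle is the verification of the Lyapunov estimate: the two-sided restriction $\alpha\in(1,2)$ is essential---$\alpha<2$ for the small-jump integrability and $\alpha>1$ for the large-jump integrability---and the linear-in-$\abs{x}$ growth of $\sup_{z}k(x,z)$ supplied by \eqref{E3.5} is exactly matched to the linear growth of $V$.
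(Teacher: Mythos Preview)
Your proof is correct and follows the same architecture as the paper's: a Lyapunov bound $\I V\le cV$, Dynkin/Gronwall to control $\Exp_{x}V(X_{t\wedge\tau_{R}})$, then Markov's inequality. The substantive difference is the choice of Lyapunov function. The paper takes $f(x)\approx 1+\abs{x}^{\delta}$ with $\delta\in(0,\alpha-1)$ and, to estimate the large-jump integral $\int_{\abs{z}>1}\babs{\abs{x+z}^{\delta}-\abs{x}^{\delta}}\,\tfrac{k(x,z)}{\abs{z}^{d+\alpha}}\,\D{z}$, splits into the regions $\{\abs{x}\ge 2\abs{z}\}$ and $\{\abs{x}<2\abs{z}\}$ with separate pointwise bounds (equation \eqref{ee3.7}). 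Your choice $V(x)=(1+\abs{x}^{2})^{1/2}$ is globally $1$-Lipschitz, so the large-jump estimate collapses to the single observation $\int_{\abs{z}>1}\abs{z}^{1-d-\alpha}\,\D{z}<\infty$; the linear growth of $V$ is then exactly matched to the linear bound $\sup_{z}k(x,z)\lesssim 1+\abs{x}$ coming from \eqref{E3.5}. This is a cleaner route to the same inequality. Your explicit cutoff $V_{R}\in C^{2}_{b}(\Rd)$ with $V_{R}\le V$ and the check that $\I V_{R}\le\I V$ on $B_{R}$ is also more careful than the paper's direct appeal to Dynkin's formula for the unbounded $f$.
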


\begin{proof}
Let $\delta\in(0,\alpha-1)$, and $f\in C^{2}(\Rd)$ be a non-decreasing,
radial function satisfying
\begin{equation*}
f(x)\;=\;\bigl(1+\abs{x}^{\delta}\bigr)\quad\text{for}~\abs{x}\ge 1\,,
\quad\text{and}\quad
f(x)\;\ge\; 1\quad\text{for}~\abs{x}< 1\,.
\end{equation*}
We claim that
\begin{equation}\label{ee3.6}
\babss{\int_{\Rds}\dd f(x;z)\,\frac{k(x,z)}{\abs{z}^{d+\alpha}}\,\D{z}}
\;\le\; \kappa_{0}\,(1+\abs{x}^{\delta})\qquad\forall x\in\Rd\,,
\end{equation}
for some constant $\kappa_{0}$.
To prove \eqref{ee3.6} first note that since the second partial derivatives
of $f$ are bounded over $\Rd$, it follows that
$\Babs{\int_{\abs{z}\le 1}\dd f(x;z)\,\frac{k(x,z)}{\abs{z}^{d+\alpha}}\,\D{z}}$
is bounded by some constant.
It is easy to verify that, provided $z\ne 0$, then
\begin{equation}\label{ee3.7}
\begin{aligned}
\babs{\abs{x+z}^{\delta} - \abs{x}^{\delta}}
&\;\le\; 2\delta \abs{z}\,\abs{x}^{\delta-1}\,,
&\qquad\text{if}~ \abs{x}\ge2\abs{z}\,,\\[5pt]
\babs{\abs{x+z}^{\delta} - \abs{x}^{\delta}}
&\;\le\; 8 \abs{z}^{\delta}\,, &\qquad\text{if}~ \abs{x}<2\abs{z}\,,
\end{aligned}
\end{equation}
for some constant $\kappa$.
By the hypothesis in \eqref{E3.5}, for some constant $c$, we have
\begin{equation}\label{ee3.8}
k(x,z)\;\le\; c\,(1+\abs{x})\qquad\forall x\in\Rd\,.
\end{equation}
Combining \eqref{ee3.7}--\eqref{ee3.8} we obtain,
for $\abs{x}>1$,
\begin{align*}
\babss{\int_{\abs{z}>1}\dd f(x;z)\,\frac{k(x,z)}{\abs{z}^{d+\alpha}}\,\D{z}}
&\;\le\;
\int_{1<\abs{z}\le \frac{\abs{x}}{2}}
2\delta\,c\,(1+\abs{x})\,\abs{x}^{\delta-1}\,\abs{z}\,
\frac{1}{\abs{z}^{d+\alpha}}\,\D{z}\nonumber\\[5pt]
&\mspace{100mu}
+\int_{\abs{z}>\frac{\abs{x}}{2}}
8c\,(1+\abs{x})\,\abs{z}^{\delta}\,
\frac{1}{\abs{z}^{d+\alpha}}\,\D{z}\nonumber\\[5pt]
&\;\le\;
\kappa(d)\biggl(\frac{2\,\delta\,c}{\alpha-1}\,
(1+\abs{x})\,\abs{x}^{\delta-1}
+\frac{2^{3+\alpha-\delta}c}{\alpha-\delta}\,(1+\abs{x})\,\abs{x}^{\delta-\alpha}
\biggr)
\end{align*}
for some constant $\kappa(d)$,
thus establishing \eqref{ee3.6}.

By \eqref{ee3.6} and the assumption on the growth of $b$ in \eqref{E3.5},
we obtain
\begin{equation*}
\abs{\I f(x)}\;\le\; K_{1}\; f(x)\qquad\forall x\in\Rd\,,
\end{equation*}
for some constant $K_{1}$.
Then, by Dynkin's formula, we have,
\begin{align*}
\Exp_{x}\bigl[f(X_{t\wedge\tau_{n}})\bigr]&\;=\;
f(x) + \Exp_{x}\,\biggl[
\int_{0}^{t\wedge\tau_{n}}\I f(X_{s})\,\D{s}\biggr]\nonumber\\[5pt]
&\;\le\; f(x) + K_{1}\,
\Exp_{x}\,\biggl[\int_{0}^{t\wedge\tau_{n}} f(X_{s})\,\D{s}\biggr]\nonumber\\[5pt]
&\;\le\; f(x) + K_{1}\,
\int_{0}^{t} \Exp_{x}\,\bigl[f(X_{s\wedge\tau_{n}})\bigr]\,\D{s}\,,
\end{align*}
where in the second inequality we use the property that $f$ is radial
and non-decreasing.
Hence, by the Gronwall inequality, we have
\begin{equation}\label{ee3.9}
\Exp_{x}\bigl[f(X_{t\wedge\tau_{n}})\bigr]\;\le\; f(x)\,\E^{K_{1} t}\qquad
\forall\,t>0\,,\quad \forall\,n\in\NN\,.
\end{equation}
Since
$\Exp_{x}\bigl[f(X_{t\wedge\tau_{n}})\bigr]\,\ge\,f(n)\,\Prob_{x}(\tau_{n}\le t)$,
we obtain by \eqref{ee3.9} that
\begin{align*}
\Prob_{x}\,\biggl(\sup_{s\in[0, T]}\,\abs{X_{s}}\ge n\biggr)&\;=\;
\Prob_{x}(\tau_{n}\le T)\\[5pt]
&\;\le\; \frac{f(x)}{1+n^{\delta}}\,\E^{K_{1} T}\qquad
\forall\,T>0\,,\quad \forall\,n\in\NN\,,
\end{align*}
from which the conclusion of the lemma follows.
\end{proof}

\subsection{A class of operators with weakly H\"older continuous kernels}
\label{S3.2}

We introduce a class of kernels whose numerators
$k(x,z)$ are locally H\"older continuous in $x$, and $z\mapsto k(x,z)$
is bounded, locally in $x$.
We call such kernels $\uppi$ weakly H\"older continuous since they have the
property that for any $f$ satisfying
$\int_{\Rd} \frac{f(z)}{\abs{z}^{d+\alpha}}\,\D{z}<\infty$
the map $x\mapsto \int_{\Rd} f(z)\frac{k(x,z)}{\abs{z}^{d+\alpha}}\,\D{z}$
is locally H\"older continuous.

\begin{definition}\label{D-hkernel}
Let $\lambda: [0,\infty) \to (0,\infty)$ be a nondecreasing function
that plays the role of a parameter.
For a bounded domain $D$ define $\lambda_{D}\df \sup\,\{\lambda(R)\,\colon
D\subset B_{R+1}\}$.
Let $\mathfrak{I}_{\alpha}(\beta,\theta,\lambda)$, where
$\beta\in(0,1]$, $\theta\in(0,1)$,
denote
the class of operators $\I$ as in \eqref{gen-op}
that satisfy, on each bounded domain $D$, the following properties:
\begin{itemize}
\item[(a)]
$\alpha\in (1,2)$.
\item[(b)]
$b$ is locally H\"older continuous with exponent $\beta$,
and satisfies
\begin{equation*}
\abs{b(x)}\;\le\;\lambda_{D}\,,\quad\text{and}\quad
\abs{b(x)-b(y)}\;\le\; \lambda_{D}\,\abs{x-y}^{\beta}
\qquad \forall\, x,\,y\in D\,.
\end{equation*}
\item[(c)]
The map $k(x,z)$ is continuous in $x$ and measurable in $z$ and
satisfies
\begin{equation*}
\begin{split}
\abs{k(x,z)-k(y,z)}\;\le\; \lambda_{D}\,\abs{x-y}^{\beta}
\qquad \forall\, x,\,y\in D\,,\quad \forall\,z\in\Rd\\[5pt]
\lambda^{-1}_{D}\;\le\; k(x,z)\;\le\;\lambda_{D}
\qquad \forall\, x\in D\,,\quad \forall\,z\in\Rd\,.
\end{split}
\end{equation*}
\item[(d)]
For any $x\in D$, we have
\begin{equation}\label{ee3.10}
\babss{\int_{\Rds} \bigl(\abs{z}^{\alpha-\theta}\wedge1\bigr)\,\,
\frac{\abs{k(x,z)-k(x,0)}}{\abs{z}^{d+\alpha}}\,\D{z}}
\;\le\;\lambda_{D}\,.
\end{equation}
\end{itemize}
\end{definition}

\begin{remark}
It is evident that if
$\abs{k(x,z)-k(x,0)}\le \Tilde{\lambda}_{D}\abs{z}^{\theta'}$ for some
$\theta'>\theta$,
then property (d) of Definition~\ref{D-hkernel} is satisfied .
\end{remark}

We may view $\I$ as the sum of the translation invariant operator
$\I_{0}$ defined by
\begin{equation*}
\I_{0}\,u(x)\;\df\;b(x)\cdot\grad u(x)
+ \int_{\Rds} \dd u(x;z)\, \frac{k(x,0)}{\abs{z}^{d+2s}}\,\D{z}\,,
\end{equation*}
which is uniformly elliptic on every bounded domain,
and a perturbation that takes the form
\begin{equation*}
\widetilde\I\,u(x)\;\df\;
 \int_{\Rds} \dd u(x;z)\, \frac{k(x,z)-k(x,0)}{\abs{z}^{d+2s}}\,\D{z}\,.
\end{equation*}
We are not assuming that the numerator $k$ is symmetric, as
in the approximation techniques in
\cite{caffarelli-silvestre-approx, Kriventsov,Bjorland-12}.
Moreover, these operators are
not addressed in \cite{ChangLara-Davila} due to the presence of
the drift term.

For operators in the class $\mathfrak{I}_{\alpha}(\beta,\theta,\lambda)$,
we have the following
regularity result concerning solutions to the Dirichlet problem.

\begin{theorem}\label{T3.1}
Let $\I\in\mathfrak{I}_{\alpha}(\beta,\theta,\lambda)$,
$D$ be a bounded domain with $C^{2}$ boundary, and $f\in C^{\beta}(\Bar{D})$.
We assume that neither $\beta$, nor $2s+\beta$ are integers, and
that either $\beta<s$, or that
$\beta\ge s$ and
\begin{equation*}
\abs{k(x,z)-k(x,0)}\;\le\; \Tilde{\lambda}_{D}\, \abs{z}^{\theta}\qquad
\forall x\in D\,, \;\forall z\in \Rd\,,
\end{equation*}
for some positive constant $\Tilde{\lambda}_{D}$.
Let $\Exp_{x}$ denote the expectation operator
corresponding to the Markov process $X$ with generator given by $\I$.
Then $u(x)\df\Exp_{x}\bigl[\int_{0}^{\tau(D)}f(X_{s})\,\D{s}\bigr]$
is the unique solution in $C^{\alpha+\beta}(D)\cap C(\Bar{D})$
to the equation
\begin{align*}
\I u &\;=\;-f \quad \text{in}~D\,,\\
u&\;=\;0 \quad \text{in}~D^{c}\,.
\end{align*}
\end{theorem}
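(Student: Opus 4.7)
The plan is to combine the analytic existence/regularity theory developed in Section~\ref{S6} with a probabilistic identification via Dynkin's formula, and to obtain uniqueness as a byproduct. I would first construct a classical solution $v\in C^{\alpha+\beta}(D)\cap C(\Bar D)$ of the Dirichlet problem by PDE methods, then identify it with $u$ through a stopping-time argument, and finally read off uniqueness.

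For existence, I would use the decomposition $\I=\I_{0}+\widetilde\I$, where
\begin{equation*}
\I_{0}w(x)\;\df\;b(x)\cdot\grad w(x)+\int_{\Rds}\dd w(x;z)\,\frac{k(x,0)}{\abs{z}^{d+\alpha}}\,\D{z}
\end{equation*}
is the ``frozen-numerator'' operator, uniformly elliptic on each bounded subdomain, and $\widetilde\I$ is the perturbation with kernel $\bigl(k(x,z)-k(x,0)\bigr)\abs{z}^{-d-\alpha}$. The Schauder-type regularity results of Section~\ref{S6} for $\I_{0}$ produce, in the weighted H\"older scale $\mathscr{C}^{(r)}_{\gamma}(D)$, a solution map with a gain of $\alpha$ derivatives. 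Condition~(d) of Definition~\ref{D-hkernel}, together with the dichotomy $\beta<s$ versus $\beta\ge s$ with the pointwise bound $\abs{k(x,z)-k(x,0)}\le\Tilde\lambda_{D}\abs{z}^{\theta}$, makes $\widetilde\I$ strictly subordinate to $\I_{0}$. A standard continuity method (or a contraction after localization) then produces $v\in C^{\alpha+\beta}(D)\cap C(\Bar D)$ with $v=0$ on $D^{c}$ and $\I v=-f$ in $D$; the exclusion of integer values of $\beta$ and $\alpha+\beta$ keeps us clear of borderline function spaces.

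To identify $v$ with $u$, set $D_{\epsilon}\df\{y\in D\,\colon\dist(y,\partial D)>\epsilon\}$ and $\tau_{\epsilon}\df\tau(D_{\epsilon})$. Since $v\in C^{2}(\Bar D_{\epsilon})\cap C(\Rd)$ with $v\equiv 0$ on $D^{c}$, the integral defining $\I v(y)$ is absolutely convergent for $y\in\Bar D_{\epsilon}$ and equals $-f(y)$ there. Dynkin's formula applied up to $t\wedge\tau_{\epsilon}$ gives
\begin{equation*}
v(x)\;=\;\Exp_{x}\bigl[v(X_{t\wedge\tau_{\epsilon}})\bigr]+\Exp_{x}\biggl[\int_{0}^{t\wedge\tau_{\epsilon}}f(X_{s})\,\D{s}\biggr].
\end{equation*}
By Lemma~\ref{L3.1}, $\Exp_{x}[\tau(D)]<\infty$, so letting $t\to\infty$ via dominated convergence yields the same identity with $t\wedge\tau_{\epsilon}$ replaced by $\tau_{\epsilon}$. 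As $\epsilon\to 0$, the second term on the right converges to $\Exp_{x}\bigl[\int_{0}^{\tau(D)}f(X_{s})\,\D{s}\bigr]$, again by dominated convergence.

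The main obstacle is passing to the limit in the boundary term $\Exp_{x}[v(X_{\tau_{\epsilon}})]$. If the process leaves $D$ at time $\tau_{\epsilon}$ (for instance through a large jump), then $X_{\tau_{\epsilon}}\in D^{c}$ and $v(X_{\tau_{\epsilon}})=0$ by construction; otherwise $X_{\tau_{\epsilon}}\in\Bar D\setminus D_{\epsilon}$, so $\dist(X_{\tau_{\epsilon}},\partial D)\le\epsilon$, and since $v\in C(\Bar D)$ vanishes on $\partial D$, this forces $v(X_{\tau_{\epsilon}})\to 0$ almost surely as $\epsilon\to 0$. Combined with the uniform bound $\norm{v}_{\infty}\le C\sup_{D}\abs{f}$ from Lemma~\ref{L3.2}, the bounded convergence theorem gives $\Exp_{x}[v(X_{\tau_{\epsilon}})]\to 0$, and therefore $v=u$. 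Uniqueness in $C^{\alpha+\beta}(D)\cap C(\Bar D)$ is now immediate: any other solution $w$ in this class satisfies the same Dynkin identity by the identical argument, so $w=u=v$.
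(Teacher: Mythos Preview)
Your outline parallels the paper's structure, and the treatment of the boundary term $\Exp_{x}[v(X_{\tau_\epsilon})]\to 0$ is fine. The gap is in the running integral: you claim $\int_{0}^{\tau_\epsilon}f(X_{s})\,\D{s}\to\int_{0}^{\tau(D)}f(X_{s})\,\D{s}$ by dominated convergence, which presupposes $\tau_\epsilon\nearrow\tau(D)$ a.s. This is not automatic. If $\sigma\df\lim_{\epsilon\downarrow0}\tau_\epsilon\le\tau(D)$, one checks that $\sigma<\tau(D)$ exactly on the event where $X_{\sigma-}\in\partial D$ while $X_{\sigma}\in D$, i.e., the path creeps to the boundary from inside and then jumps back into $D$ without ever having exited. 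Ruling this out requires an analogue of Lemma~\ref{L3.7}, which the paper proves only for the drifted $\alpha$-stable case via the density of $X_{t}$; no such result is established here for the general class $\mathfrak{I}_{\alpha}(\beta,\theta,\lambda)$.

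The paper sidesteps this by going the other way: it \emph{enlarges} $D$ to its $\varepsilon$-neighborhood $D_{\varepsilon}$, solves the Dirichlet problem there via Theorem~\ref{T6.1} to obtain $u_{\varepsilon}\in C^{\alpha+\beta}(D_{\varepsilon})\cap C(\Bar{D}_{\varepsilon})$, and applies It\^o's formula directly up to $\tau(D)$. This is legitimate because $\Bar{D}$ lies compactly inside $D_{\varepsilon}$, where $u_{\varepsilon}$ is already regular enough for $\I u_{\varepsilon}=-f$ to hold pointwise. The resulting identity
\[
u_{\varepsilon}(x)\;=\;\Exp_{x}\bigl[u_{\varepsilon}(X_{\tau(D)})\bigr]
+\Exp_{x}\biggl[\int_{0}^{\tau(D)}f(X_{s})\,\D{s}\biggr]
\]
has the correct running integral from the outset, independent of $\varepsilon$; only the boundary term varies, and it vanishes in the limit because $u_{\varepsilon}\to u$ uniformly on $\Rd$ (via the uniform weighted estimate $\dabs{u_{\varepsilon}}^{(-r)}_{\alpha+\beta;D_{\varepsilon}}\le C_{0}\norm{f}_{C^{\beta}}$) and $X_{\tau(D)}\in D^{c}$ where $u=0$. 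The enlargement trick thus replaces a delicate path property by a purely analytic limit.
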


\begin{proof}
For $\varepsilon>0$, 
we denote by $D_{\varepsilon}$ the $\varepsilon$-neighborhood of $D$, i.e.,
\begin{equation}\label{ee3.11}
D_{\varepsilon}\;\df\;\{z\in \Rd:\,\dist(z, D)< \varepsilon\}\,. 
\end{equation}
Note that for $\varepsilon$ small enough, $D_{\varepsilon}$
has a $C^{2}$ boundary.
Then by Theorem~\ref{T6.1} there
exists $u_\varepsilon\in C^{\alpha+\beta}(D)\cap C(\Bar{D})$ satisfying
\begin{align*}
\I u_\varepsilon &\;=\;-f \quad \text{in}~D_{\varepsilon},\\
u_\varepsilon&\;=\;0 \quad \text{in}~D_{\varepsilon}^{c}\,.
\end{align*}
In the preceding equation $f$ stands for the Lipschitz extension of $f$.
We also have the estimate (recall the definition of
$\dabs{\,\cdot\,}^{(r)}_{\beta;D}$ in Section~\ref{S1.1})
\begin{equation*}
\dabs{u_\varepsilon}^{(-r)}_{\alpha+\beta;D_{\varepsilon}}
\;\le\; C_{0} \, \norm{f}_{C^{\beta}(\Bar{D}_\varepsilon)}\,,
\end{equation*}
with $r$ some fixed constant in $\bigl(0,\frac{\alpha}{2}\bigr)$.
As can be seen from the Lemma~\ref{L3.2} and the proof of
Theorem~\ref{T6.1}, we may select a constant $C_{0}$,
that does not depend on $\varepsilon$, for $\varepsilon$ small enough.
Since $u_\varepsilon=0$ in $D^{c}_\varepsilon$, it follows that
\begin{equation*}
\norm{u_\varepsilon}_{C^{r}(\R^{d})}\;\le\;
c_{1}\,\dabs{u_\varepsilon}^{(-r)}_{\alpha+\beta;D_{\varepsilon}}
\end{equation*}
for some constant $c_{1}$, independent of $\varepsilon$,
for all small enough $\varepsilon$.
Hence $u_\varepsilon\to u$ as $\varepsilon\to 0$, along
some subsequence, and $u\in C^{\alpha+\beta}(D)\cap C(\Bar{D})$
by Theorem~\ref{T6.1}.
By It\^{o}'s formula, we obtain
\begin{equation*}
u_\varepsilon(x)\;=\; \Exp_{x}\bigl[u_\varepsilon(X_{\tau(D)})\bigr]
+\Exp_{x}\biggl[\int_{0}^{\tau(D)}f(X_{s})\,\D{s}\biggr]\,.
\end{equation*}
Letting $\varepsilon\searrow 0$, we obtain the result.
Uniqueness follows from Theorem~\ref{T6.1}.
\end{proof}

Theorem~\ref{T3.1} can be extended to account for
non-zero boundary conditions, provided the boundary data is
regular enough, say in $C^3(\R^{d})\cap C_{b}(\Rd)$.

\subsection{Some results concerning the fractional Laplacian with drift}

In the rest of this section we consider a smaller class of operators,
but the data of the Dirichlet problem is only continuous.
We focus on stochastic differential equations
driven by a symmetric $\alpha$-stable process.
More precisely, we consider a process $X$ satisfying
\begin{equation}\label{ee3.12}
dX_{t}\;=\;b(X_{t})\,\D{t}+\D{L}_{t}\,,
\end{equation}
where $L_{t}$ is a symmetric $\alpha$-stable process with generator given by
\begin{equation*}
-(-\Delta)^{\nicefrac{\alpha}{2}}f(x)
\;=\;c(d,\alpha) \int_{\Rds}\dd f(x;z)\,\frac{1}{\abs{z}^{d+\alpha}}\,\D{z}\,,
\quad f\in C^{2}_{b}(\Rd)\,,
\end{equation*}
with $\alpha\in (1,2)$, and $c(d,\alpha)$ a normalizing constant.
Then the solution of \eqref{ee3.12} is also a solution to the
martingale problem for $\overline\I$ given by
\begin{equation*}
\overline\I f(x) \;\df\; -(-\Delta)^{\nicefrac{\alpha}{2}}f(x)
+ b(x)\cdot \grad{f}(x) \,,\quad\alpha\in (1,2)\,.
\end{equation*}
The following condition is in effect for the
rest of this section unless mentioned otherwise.

\begin{condition}\label{cond1}
There exists a positive constant $M$ such that
\begin{align*}
\abs{b(x)-b(y)} \;\le\; & M \abs{x-y}\qquad\forall\,x, y\in\R^{d}\,, \\[3pt]
\norm{b}_{\infty} \;\le\; & M\,.
\end{align*}
\end{condition}

Under Condition~\ref{cond1}, equation \eqref{ee3.12}
has a unique adapted strong c\'{a}dl\'{a}g solution
for any initial condition $X_{0}=x\in\R^{d}$, which is a Feller process
\cite{applebaum}.
We need the following assertion whose proof is standard, and therefore
omitted.

\begin{lemma}\label{L3.4}
Assume Condition~\ref{cond1} holds and $T>0$.
Let $x_{n}\to x$ as $n\to \infty$ and $X^{n}$, $X$ denote the solutions to
\eqref{ee3.12} with initial data $X^{n}_{0}=x_{n}$, $X_{0}=x$, respectively.
Then
\begin{equation*}
\lim_{n\to\infty}\;
\Exp\Biggl[\sup_{s\in[0, T]}\,\abs{X^{n}_{s}-X_{s}}^{2}\Biggr]\;=\;0\,.
\end{equation*}
\end{lemma}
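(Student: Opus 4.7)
The plan is to couple the two solutions on a common probability space carrying a single symmetric $\alpha$-stable process $L$, and exploit the fact that the driving noise is additive. Under Condition~\ref{cond1}, strong existence and uniqueness hold for \eqref{ee3.12}, so we may realize both $X^n$ and $X$ as strong solutions driven by the same $L$. Then the difference satisfies the pathwise identity
\begin{equation*}
X^n_t - X_t \;=\; (x_n - x) + \int_0^t \bigl(b(X^n_s) - b(X_s)\bigr)\,\D s\,,
\end{equation*}
since the $L$-contribution cancels. Thus the $L^2$-control over an interval $[0,T]$ reduces entirely to a deterministic Gronwall-type estimate and does not require any estimate on the jump part of the process.

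First I would apply the Lipschitz bound from Condition~\ref{cond1} to obtain, for each $t\le T$,
\begin{equation*}
\babs{X^n_t - X_t} \;\le\; \abs{x_n - x} + M\int_0^t \babs{X^n_s - X_s}\,\D s\,.
\end{equation*}
Setting $\phi_n(t) \df \sup_{s\le t}\,\babs{X^n_s - X_s}$ and noting that the right-hand side is nondecreasing in $t$, I would pass to the running supremum to get
\begin{equation*}
\phi_n(t) \;\le\; \abs{x_n - x} + M\int_0^t \phi_n(s)\,\D s\,, \qquad t\in[0,T]\,.
\end{equation*}
Gronwall's inequality then yields the pathwise estimate $\phi_n(T) \le \abs{x_n - x}\,\E^{MT}$.

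Squaring and taking expectation gives
\begin{equation*}
\Exp\Bigl[\sup_{s\in[0,T]}\,\babs{X^n_s - X_s}^2\Bigr]
\;\le\; \abs{x_n - x}^2\,\E^{2MT}\,,
\end{equation*}
and the right-hand side tends to zero as $n\to\infty$, yielding the claim. There is no genuine obstacle in this argument: the only substantive input is that additive noise cancels in the difference equation, which is what allows bypassing any moment bound on the $\alpha$-stable integrand (which would be delicate for $\alpha\in(1,2)$). If the coupling via a common driving noise is not already built into the construction of strong solutions in the reference cited, one can invoke the pathwise uniqueness and Yamada--Watanabe-type arguments for jump SDEs to justify realizing $X^n$ and $X$ on the same space, which is standard under Condition~\ref{cond1}.
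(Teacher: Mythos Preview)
Your proof is correct. The paper does not give a proof of this lemma at all; it states that the assertion is standard and omits the argument. Your coupling via a common driving L\'evy process, followed by a pathwise Gronwall estimate on the difference (which is purely a drift integral since the additive noise cancels), is exactly the kind of standard argument the authors had in mind, and it goes through without issue under Condition~\ref{cond1}.
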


The rest of the section is devoted to the proof of the following result.

\begin{theorem}\label{T3.2}
Let $D\subset\Rd$ be a bounded domain
with $C^{1}$ boundary, $f\in C_b(D)$, and $g\in C_{b}(D^{c})$.
The function $u(\cdot)$ defined in \eqref{E3.1} is continuous and bounded,
and is the unique viscosity solution to the equation
\begin{equation}\label{ee3.13}
\begin{split}
\overline\I u& \;=\;-f \quad \text{in}~D\,,\\
u& \;=\; g \quad \text{in}~D^{c}\,.
\end{split}
\end{equation}
\end{theorem}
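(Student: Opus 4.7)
The plan is to establish boundedness, continuity, and the viscosity property in sequence, and then dispatch uniqueness by a comparison argument. For boundedness, note that the kernel $c(d,\alpha)\abs{z}^{-d-\alpha}$ trivially satisfies the integrability conditions \eqref{ee3.2} of Lemma~\ref{L3.1}, and under Condition~\ref{cond1} the drift $b$ is globally bounded. Lemma~\ref{L3.1} therefore yields $\sup_{x\in D}\Exp_{x}[\tau(D)]<\infty$, from which $\norm{u}_{\infty}\le\norm{f}_{\infty}\sup_{x\in D}\Exp_{x}[\tau(D)]+\norm{g}_{\infty}$. For continuity of $u$ at an interior point $x\in D$, I would take $x_{n}\to x$, apply Lemma~\ref{L3.4} to obtain $X^{x_{n}}\to X^{x}$ uniformly on compact intervals in $L^{2}$, pass to a subsequence with almost sure uniform convergence, use that $\tau^{x}(D)$ is a.s.\ a point of strict crossing into $D^{c}$ (thanks to the jumps) to get $\tau^{x_{n}}(D)\to\tau^{x}(D)$ a.s., and conclude by dominated convergence, with uniform integrability of $\tau$ supplied by Lemma~\ref{L3.1}.

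The crux is continuity up to $\partial D$: for $x_{n}\to x\in\partial D$ one must prove $u(x_{n})\to g(x)$, which in turn reduces to $\Prob_{x}(\tau(\Bar{D})=0)=1$ for every $x\in\partial D$. Since $\partial D$ is $C^{1}$, at every such $x$ there is a truncated exterior cone $\Gamma\subset D^{c}$ with vertex at $x$. The cone exit estimate of \cite{hernandez} for the symmetric $\alpha$-stable process $L$ yields a constant $c>0$ with $\Prob_{x}(L_{t}\in\Gamma)\ge c$ for all small $t>0$. The Lipschitz bounded drift displaces the process by at most $\norm{b}_{\infty}t$ on $[0,t]$, while the $\alpha$-stable scaling makes the typical jump displacement of order $t^{1/\alpha}$; since $\alpha\in(1,2)$ we have $t\ll t^{1/\alpha}$ for $t$ small, so the drift is a subordinate perturbation of the jump part on small time scales. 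A coupling/perturbation argument (based on absolute continuity of the laws of $X$ and $L$ on small time intervals, or directly on Lemma~\ref{L3.4}) transfers the cone hitting estimate from $L$ to $X$, giving $\Prob_{x}(X_{t}\in D^{c})\ge c/2$ for all small $t>0$. By Blumenthal's zero–one law applied to $\{\tau(\Bar{D})=0\}$, this forces $\Prob_{x}(\tau(\Bar{D})=0)=1$. Right-continuity of paths, boundedness of $f$, and continuity of $g$ at $x$ then yield $u(x_{n})\to g(x)$ after one more appeal to Lemma~\ref{L3.4}.

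With continuity and boundedness of $u$ in hand, verification of the viscosity property is standard. If $\varphi\in C^{2}(\Rd)$ is bounded and touches $u$ from above at $x\in\Bar{D}$ with strict inequality elsewhere, then by the strong Markov property,
\begin{equation*}
u(x)\;=\;\Exp_{x}\biggl[\int_{0}^{t\wedge\tau(D)}f(X_{s})\,\D{s}\biggr]+\Exp_{x}\bigl[u(X_{t\wedge\tau(D)})\bigr]\,,
\end{equation*}
so $\varphi(x)\ge\Exp_{x}[\int_{0}^{t\wedge\tau(D)}f(X_{s})\,\D{s}]+\Exp_{x}[\varphi(X_{t\wedge\tau(D)})]$. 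Applying Dynkin's formula to $\varphi$, dividing by $t$, and letting $t\downarrow 0$ produces $\overline\I\varphi(x)+f(x)\ge 0$ when $x\in D$. When $x\in\partial D$, continuity yields $u(x)=g(x)$, so the boundary alternative in Definition~\ref{D-visc} is satisfied automatically. The super-solution case is symmetric. Uniqueness then follows from a standard comparison principle: the difference $w=u-v$ of two bounded continuous viscosity solutions vanishes in $D^{c}$ and satisfies $\overline\I w=0$ in the viscosity sense on $D$, and any interior maximum is ruled out by touching with a smooth test function, forcing $w\equiv 0$ on $\Bar{D}$.

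The hard part, as explicitly flagged by the authors in the introduction, is proving $\Prob_{x}(\tau(\Bar D)=0)=1$ for $x\in\partial D$: it is here that $\alpha>1$ is used in an essential way (to dominate the drift on small time scales) and that the cone exit estimate of \cite{hernandez} is indispensable. Every other step is either a direct application of Lemma~\ref{L3.1}, Lemma~\ref{L3.4}, the strong Markov property, or a routine comparison principle.
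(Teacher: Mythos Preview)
Your overall architecture matches the paper's: boundedness via Lemma~\ref{L3.1}, continuity via Lemma~\ref{L3.4} plus the boundary regularity $\Prob_{x}(\tau(\Bar D)=0)=1$ from the cone estimate of \cite{hernandez}, the viscosity property via the dynamic programming identity and Dynkin's formula, and uniqueness via comparison (the paper cites \cite{hector}). Your identification of the boundary regularity as the crux, and your reason for needing $\alpha>1$, are exactly right.

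There is, however, a genuine gap in your interior continuity argument. The claim that ``$\tau^{x}(D)$ is a.s.\ a point of strict crossing into $D^{c}$ (thanks to the jumps)'' is not correct as stated: for $\alpha\in(1,2)$ the process can and does exit $D$ continuously through $\partial D$ with positive probability. What you actually need is $\tau(D)=\tau(\Bar D)$ a.s.\ (Corollary~\ref{C3.1}), which follows by combining the strong Markov property with the same boundary regularity $\Prob_{y}(\tau(\Bar D)=0)=1$ you already proved for $y\in\partial D$. With this in hand one obtains $\tau^{n}\wedge T\to\tau\wedge T$ a.s. Second, even once $\tau^{n}\to\tau$, the convergence $g(X^{n}_{\tau^{n}})\to g(X_{\tau})$ is not automatic: if $\tau^{n}\uparrow\tau$ along a subsequence and $X$ has a jump at $\tau$, then $X_{\tau^{n}}\to X_{\tau-}$, not $X_{\tau}$. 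The paper disposes of this via Lemma~\ref{L3.7}, which shows $\Prob_{x}(X_{\tau(D)-}\in\partial D,\,X_{\tau(D)}\in\Bar D^{c})=0$ using the L\'evy system formula and the fact that $X_{s}$ has a density. Your sketch omits this step entirely; without it the dominated convergence argument for the $g$-term does not close.
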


The proof of Theorem~\ref{T3.2} relies on several lemmas which follow.
The following lemma is a careful modification of
\cite[Lemma~2.1]{song-vondracek-05}.

\begin{lemma}\label{L3.5}
Let $D$ be a given bounded domain.
There exits a constant $\kappa_{1}>0$ such that for any $x\in D$ and 
$r\in (0, 1)$
\begin{equation*}
\Prob_{x}\biggl(\sup_{0\le s\le t}\;\abs{X_{s}-x}>r\biggr)\;\le\;
\kappa_{1} t\, r^{-\alpha}\qquad \forall\,x\in D\,,
\end{equation*}
where $X$ satisfies \eqref{ee3.12}, and $X_{0}=x$.
\end{lemma}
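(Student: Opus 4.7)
The plan is to decompose the trajectory into its drift and L\'evy components and control each separately. From \eqref{ee3.12} one has $X_s - x = \int_0^s b(X_u)\,\D{u} + L_s$ for every $s$, and by Condition~\ref{cond1} the drift part is bounded in absolute value by $Mt$ uniformly for $s\le t$. Hence $\sup_{0\le s\le t}\abs{X_s - x} \le Mt + \sup_{0\le s\le t}\abs{L_s}$, and the problem reduces to a tail estimate on the running supremum of the pure L\'evy part, plus a routine case split on whether $t$ is small or large relative to $r/M$.

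In the small-time regime $t < r/(2M)$, the drift contribution to the supremum is strictly less than $r/2$, so that $\{\sup_{s\le t}\abs{X_s - x} > r\} \subseteq \{\sup_{s\le t}\abs{L_s} > r/2\}$ and the question reduces to a pure L\'evy estimate. Here I would invoke the self-similarity $L_s \stackrel{d}{=} s^{1/\alpha}\,L_1$ together with a maximal inequality of L\'evy--Ottaviani type for symmetric L\'evy processes, combined with the tail bound $\Prob(\abs{L_1}>y)\le C y^{-\alpha}$ of the $\alpha$-stable distribution. This yields $\Prob\bigl(\sup_{0\le s\le t}\abs{L_s} > r/2\bigr) \le C' t r^{-\alpha}$ for some constant $C'$ depending only on $\alpha$ and $d$; this is essentially the estimate of \cite[Lemma~2.1]{song-vondracek-05} in the driftless case.

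In the large-time regime $t \ge r/(2M)$, I would fall back on the trivial bound $\Prob_x(\cdot) \le 1$. Since $r < 1$ and $\alpha > 1$, one has $t r^{-\alpha} \ge r^{1-\alpha}/(2M) \ge 1/(2M)$, and therefore $\Prob_x(\cdot) \le 2M t r^{-\alpha}$. Combining the two regimes with $\kappa_1 \df \max(2M, C')$ gives the stated inequality, uniformly in $x\in D$, since the estimate depends on $x$ only through the initial condition and the drift bound in Condition~\ref{cond1} is global. The domain $D$ itself plays no real role beyond specifying the starting point.

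The main technical point is the maximal tail estimate for $\sup_{s\le t}\abs{L_s}$: passing from the marginal tail of $L_t$ (immediate from the scaling and stable tail) to the running supremum requires a symmetric-L\'evy reflection-type inequality which, while classical, should be cited explicitly. Everything else is a routine truncation plus the uniform bound on $\norm{b}_{\infty}$ afforded by Condition~\ref{cond1}.
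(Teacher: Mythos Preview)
Your argument is correct, but it follows a genuinely different route from the paper's. The paper does not decompose $X$ into drift and L\'evy components; instead it builds a radial test function $f_r(y)=f((y-x)/r)$ with $f\in C^2_b(\Rd)$ satisfying $f(0)=0$ and $f\equiv 1$ outside the unit ball, shows by a direct scaling computation that $\babs{\overline\I f_r}\le c_3 r^{-\alpha}$ on $\Bar B_r(x)$, and then applies Dynkin's formula to conclude
\[
\Prob_x\bigl(\tau(\Bar B_r(x))\le t\bigr)\;\le\;\Exp_x\bigl[f_r(X_{\tau(\Bar B_r(x))\wedge t})\bigr]\;\le\;c_3\,t\,r^{-\alpha}\,.
\]
Your approach is arguably more elementary for this particular SDE: it exploits the explicit additive structure $X_s-x=\int_0^s b(X_u)\,\D u + L_s$, the global drift bound from Condition~\ref{cond1}, and classical facts about the stable process (self-similarity plus a reflection-type maximal inequality). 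The trade-off is generality. The paper's test-function method uses only the generator, not the driving noise, and therefore extends immediately to any $\I\in\fL$ with locally bounded $k$ and $b$ (this is exactly Remark~\ref{R3.2}, and the extension is used in Lemma~\ref{L4.1} and elsewhere). Your decomposition, by contrast, is tied to the specific form \eqref{ee3.12} with a symmetric $\alpha$-stable driver and would not carry over to general kernels $k(x,z)$ without further work. Your closing remark that $D$ plays no role is accurate for the globally bounded setting of Condition~\ref{cond1}; in the paper's framework the domain enters through the local bounds on the coefficients, which is why it is retained in the statement.
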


\begin{proof}
Let $f\in C^{2}_{b}(\Rd)$ be such that
$f(x)=\abs{x}^{2}$ for $\abs{x}\le \frac{1}{2}$,
and $f(x)=1$ for $\abs{x}\ge 1$.
Let $c_{1}$ be a constant such that
\begin{align*}
\norm{\grad f}_{\infty}&\;\le\; c_{1}\,,\\[5pt]
\babs{f(x+z)-f(x)-\grad f(x)\cdot z}&\;\le\; c_{1} \abs{z}^{2}\qquad
\forall\, x,z\in\Rd\,.
\end{align*}
Define $f_{r}(y)=f(\frac{y-x}{r})$ where $x$ is a point in $D$.
For $y\in \Bar{B}_{r}(x)$, we obtain
\begin{align*}
\biggl|\int_{\Rds} \dd f_{r}(y;z)\,
\frac{1}{\abs{z}^{d+\alpha}}\,\D{z}\biggr|
&\;\le\; \biggl|\int_{\abs{z}\le r}
\bigl(f_{r}(y+z)-f_{r}(y)-\grad f_{r}(y)\cdot z\bigr)
\frac{1}{\abs{z}^{d+\alpha}}\,\D{z}\biggr|\\[5pt]
&\mspace{150mu}+\biggl| \int_{\abs{z}>r}\bigl(f_{r}(y+z)-f_{r}(y)\bigr)
\frac{1}{\abs{z}^{d+\alpha}}\,\D{z}\biggr| \\[5pt]
& \;\le\; c_{1}\, \frac{1}{r^{2}}\int_{\abs{z}\le r}\abs{z}^{2-d-\alpha}\,\D{z}
+ 2\int_{\abs{z}>r}\abs{z}^{-d-\alpha}\,\D{z} \\[5pt]
&\;\le\; \frac{c_{2}}{r^{\alpha}}
\end{align*}
for some constant $c_{2}$.
Since $\alpha>1$, we have
\begin{equation*}
\babs{\overline\I f_{r}(y)}\;\le\; \frac{c_{3}}{r^{\alpha}}
\qquad \forall y\in \Bar{B}_{r}(x)\,,
\end{equation*}
where $c_{3}$ is a positive constant depending on $c_{2}$ and $M$.
Therefore using It\^o's formula we obtain
\begin{equation*}
\frac{c_{3}}{r^{\alpha}}\, \Exp_{x}\bigl[\tau(\Bar{B}_{r}(x))\wedge t\bigr]
\;\ge\; \Exp_{x}\bigl[f_{r}(X_{\tau(\Bar{B}_{r}(x))\wedge t})\bigr]\,.
\end{equation*}
Since $f_{r}=1$ on $B^{c}_{r}(x)$, we have
$\Prob_{x}(\tau(\Bar{B}_{r}(x))\le t)\le c_{3}r^{-\alpha}t$.
This completes the proof.
\end{proof}

\begin{remark}\label{R3.2}
It is clear from the proof of Lemma~\ref{L3.5} that the result also holds
for operators $\I\in\fL$.
However, in this case, the constant $\kappa_{1}$ depends also
on the local bounds of $k$ and $b$.
\end{remark}

We define the following process
\begin{equation}\label{ee3.14}
Y_{t}\;\df\;x+L_{t}\,.
\end{equation}
In other words, $Y$ is a symmetric $\alpha$-stable L\'{e}vy
process starting at $x$.
It is straightforward to verify using the martingale property that
for any measurable function $f:\Rd\to \R$, we have
\begin{equation}\label{ee3.15}
\Exp_{x}[f(Y_t)]\;=\;\Exp_{\frac{x}{a}}[f(aY_{a^{-\alpha}t})]\,.
\end{equation}

We recall the following theorem from \cite[Theorem~1]{hernandez}.

\begin{theorem}\label{hernan}
Let $\theta\in (0, \pi)$.
Let $G$ be a closed cone in $\Rd$, $d\ge 2$, of angle $\theta$ with vertex at $0$.
For $d=1$ we let the cone to be the closed half line.
Define
\begin{equation*}
\eta(G)\;=\;\inf\;\{t\ge 0\,\colon Y_{t}\notin G\}\,.
\end{equation*}
Then there exists a constant $p_\alpha(\theta)>0$ such that
\begin{align*}
\Exp_{x}[(\eta(G))^{p}]
&\;<\;\infty \qquad \text{for}~ p<p_\alpha(\theta)\,,\\[5pt]
\Exp_{x}[(\eta(G))^{p}]
&\;=\;\infty \qquad \text{for}~ p>p_\alpha(\theta)\,,
\end{align*}
for all $x\in G\setminus\{0\}$.
\end{theorem}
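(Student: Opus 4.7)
The plan is to exploit the scale invariance of both the cone $G$ and the symmetric $\alpha$-stable L\'evy process $Y$ to show that $\Prob_{x}(\eta(G)>t)$ decays like a pure power of $t$, with exponent $p_{\alpha}(\theta)$ depending only on $\alpha$ and the aperture $\theta$. The dichotomy of finite versus infinite moments then follows from the layer-cake identity
\[
\Exp_{x}\bigl[(\eta(G))^{p}\bigr]\;=\;p\int_{0}^{\infty}t^{p-1}\,
\Prob_{x}(\eta(G)>t)\,\D{t}\,.
\]

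The first step is the scaling identity. Write $u(x,t)\df\Prob_{x}(\eta(G)>t)$ for $x\in G$. Applying \eqref{ee3.15} and using that $aG=G$ for every $a>0$, one checks that the rescaled process $\tilde Y_{s}\df a^{-1}Y_{a^{\alpha}s}$ is again a symmetric $\alpha$-stable process with $\tilde Y_{0}=x/a$, and that its exit time from $G$ equals $a^{-\alpha}$ times the exit time of $Y$ from $G$. This yields $u(ax, a^{\alpha}t)=u(x,t)$, so that $u(x_{0},t)= v(x_{0}/t^{1/\alpha})$ with $v(y)\df \Prob_{y}(\eta(G)>1)$. Consequently the large-$t$ behaviour of $\Prob_{x_{0}}(\eta(G)>t)$ is encoded in the behaviour of $v(y)$ as $y\to 0$ inside $G$.

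The second step is to show that $v$ decays like a pure power, $v(y)\asymp \abs{y}^{\beta_{0}}\Phi(y/\abs{y})$ as $y\to 0$ along fixed angular directions interior to $G$, for some $\beta_{0}>0$ and some strictly positive angular function $\Phi$ on $G\cap\{\abs{y}=1\}$. I would accomplish this by constructing a Martin kernel $h\colon G\to(0,\infty)$ for $G$ at the vertex: a positive function vanishing on $G^{c}$ and harmonic in $G$ with respect to $-(-\Delta)^{\alpha/2}$, homogeneous of degree $\beta_{0}$. Separating variables as $h(x)=\abs{x}^{\beta}\Phi(x/\abs{x})$, the homogeneity requirement reduces the construction to a nonlinear eigenvalue problem for a pseudodifferential operator on the spherical cap $G\cap\{\abs{y}=1\}$; the exponent $\beta=\beta_{0}$ is the unique value for which the corresponding principal eigenfunction is strictly positive, and the Krein--Rutman theorem together with a monotonicity-in-$\theta$ comparison shows $\beta_{0}$ depends continuously and monotonically on the aperture. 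Given $h$, the process $h(Y_{t\wedge\eta(G)})$ is a nonnegative local martingale, and an optional-stopping argument combined with boundary Harnack estimates for $Y$ in $G$ transfers the homogeneity of $h$ to two-sided bounds $c_{1}h(y)\le v(y)\le c_{2}h(y)$ for $y$ in an interior subcone.

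Combining the two steps, $\Prob_{x}(\eta(G)>t)\asymp t^{-\beta_{0}/\alpha}$ as $t\to\infty$ for every $x\in G\setminus\{0\}$, so setting $p_{\alpha}(\theta)\df\beta_{0}/\alpha$ the layer-cake formula delivers the stated dichotomy. The main obstacle in this plan is the construction of the Martin kernel and the accompanying sharp boundary regularity for the fractional Laplacian on the conical, unbounded domain $G$: both the singularity at the vertex and the unboundedness at infinity lie outside the classical theory developed for smooth bounded domains, and it is there that essentially all of the technical work resides; once that analysis is in place, the remainder of the proof is just a matter of unwinding the $\alpha$-stable scaling.
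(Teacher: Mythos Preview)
The paper does not prove this theorem at all: it is quoted verbatim from \cite[Theorem~1]{hernandez}. The only argument the paper supplies is the one-line remark immediately following the statement, which reduces the closed-cone version to the open-cone version proved in \cite{hernandez} by observing that any closed cone of aperture $\theta$ is contained in an open cone of slightly larger aperture and that the exit location from an open cone is almost surely not the vertex.

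Your proposal, by contrast, is an outline of how one would actually establish the result from scratch. The scaling step is correct and standard, and the strategy of passing through a homogeneous positive harmonic function (the Martin kernel at the vertex) together with a boundary Harnack principle is indeed the route taken in the literature on exit times from cones for stable processes. You correctly identify where the real work lies --- the construction of the Martin kernel and the boundary regularity on an unbounded conical domain --- and you are honest that this is left as a black box in your sketch. So your outline is plausible but is doing something the paper never attempts: the paper simply imports the theorem as an external input and uses it downstream in Lemma~\ref{L3.6}.
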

The result in \cite{hernandez} is proven for open cones.
The statement in Theorem~\ref{hernan} follows from the fact that every
closed cone is contained in an open cone except for the vertex of the cone
and with probability $1$ the exit location from an open cone is not the vertex.
The following result is also obtained in \cite{chen-kim-song} for $d\ge 2$,
using estimates of the transition density.
Our proof technique is different, 
so we present it here.

\begin{lemma}\label{L3.6}
Under the process $X$ defined in \eqref{ee3.12}, for any bounded domain $D$,
satisfying the exterior cone condition,
and $x\in\partial D$, it holds that $\Prob_{x}(\tau(\Bar{D})>0)=0$.
\end{lemma}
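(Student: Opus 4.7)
The plan is to combine Blumenthal's $0$-$1$ law for the Feller process $X$ with a pathwise coupling of $X$ and the pure-jump process $Y_{t}\df x+L_{t}$, using the self-similarity of $L$ and the hypothesis $\alpha>1$ to absorb the drift. Since $X$ is Feller (Condition~\ref{cond1}) and $\{\tau(\Bar{D})=0\}\in\calF_{0}$, Blumenthal's $0$-$1$ law reduces the statement to showing that $\Prob_{x}\bigl(\tau(\Bar{D})\le t\bigr)$ is bounded below by a positive constant for all sufficiently small $t>0$.

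First I would apply the exterior cone condition at $x$ to fix an open cone $G$ with vertex $x$ and a radius $\varepsilon>0$ such that $G\cap B_{\varepsilon}(x)\subset\Bar{D}^{c}$, together with a strictly smaller open sub-cone $G'\subset G$ sharing the vertex $x$ and a constant $\delta>0$ (obtained by compactness on the unit sphere about $x$) satisfying $B_{\delta\abs{y-x}}(y)\subset G$ for every $y\in G'$. Realize $X$ and $Y$ on a common probability space driven by the same $L$; boundedness of $b$ yields the pathwise estimate $\abs{X_{t}-Y_{t}}\le Mt$. Set $\rho_{t}\df 2Mt/\delta$ and
\[
\calE_{t}\;\df\;\bigl\{Y_{t}\in G'\cap\bigl(B_{\varepsilon/2}(x)\setminus B_{\rho_{t}}(x)\bigr)\bigr\}\,.
\]
On $\calE_{t}$, the inequality $\dist(Y_{t},\partial G)\ge\delta\abs{Y_{t}-x}\ge 2Mt$ combined with $\abs{X_{t}-Y_{t}}\le Mt$ forces $X_{t}\in G\cap B_{\varepsilon}(x)\subset\Bar{D}^{c}$ for $t\le\varepsilon/(2M)$, whence $\tau(\Bar{D})\le t$. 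The self-similarity $L_{t}\stackrel{d}{=}t^{1/\alpha}L_{1}$ and the scale invariance of the cone $G'-x$ give
\[
\Prob(\calE_{t})\;=\;\Prob\Bigl(L_{1}\in(G'-x)\cap\bigl(B_{\varepsilon/(2t^{1/\alpha})}\setminus B_{(2M/\delta)t^{(\alpha-1)/\alpha}}\bigr)\Bigr)\,,
\]
and since $\alpha>1$ the inner radius tends to $0$ and the outer to $\infty$ as $t\searrow 0$; positivity of the density of $L_{1}$ on $\Rd$ then yields $\liminf_{t\searrow 0}\Prob(\calE_{t})\ge\Prob(L_{1}\in G'-x)>0$, and Blumenthal's $0$-$1$ law upgrades this to $\Prob_{x}\bigl(\tau(\Bar{D})=0\bigr)=1$.

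The principal obstacle is the reconciliation of the two time scales in the coupling: the drift moves $X$ by $O(t)$ while the stable increments move $Y$ by $O(t^{1/\alpha})$, so the ratio $t^{(\alpha-1)/\alpha}\to 0$ precisely when $\alpha>1$, which is the regime highlighted in the introduction where the jump dominates the drift. Theorem~\ref{hernan} from \cite{hernandez} enters as the natural quantitative input---scaled cone-exit estimates for the pure stable process---which may be invoked in place of the bare density-positivity argument to furnish the lower bound on $\Prob(\calE_{t})$ and gives finer quantitative control.
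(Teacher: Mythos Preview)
Your argument is correct and genuinely different from the paper's. Both proofs couple $X$ with $Y_{t}=x+L_{t}$ via $\abs{X_{t}-Y_{t}}\le Mt$ and exploit $\alpha>1$ so that the drift correction $O(t)$ is negligible against the jump scale $O(t^{1/\alpha})$, but the way this is cashed in differs. The paper slides the exterior cone away from $x_{0}$ to a vertex $x_{r}$ at distance $r$, invokes Theorem~\ref{hernan} (M\'endez-Hern\'andez) to obtain a moment bound $\Exp_{x_{0}}[(\eta(G^{c}))^{p}]\le\kappa r^{\alpha p}$ on the first hitting time of that cone by $Y$, and then Markov's inequality plus the coupling give $\Prob_{x_{0}}(\tau(\Bar{D})>r^{\alpha'})\le\kappa r^{(\alpha-\alpha')p}$ directly, without any zero--one law. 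You instead keep the cone at $x$, take a nested sub-cone, look only at the one-time marginal $Y_{t}$, and use nothing more than self-similarity and positivity of the density of $L_{1}$ to get a uniform positive lower bound on $\Prob_{x}(\tau(\Bar{D})\le t)$; Blumenthal's $0$--$1$ law then upgrades this to probability one. Your route is more elementary---it avoids the cone-exit moment estimate entirely, and the mention of Theorem~\ref{hernan} at the end is unnecessary for the argument as written. The paper's route, on the other hand, yields a quantitative decay rate $\Prob_{x}(\tau(\Bar{D})>t)\lesssim t^{(1-\alpha'/\alpha)p}$, which your argument does not give without further work.
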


\begin{proof}
Let $x_{0}\in\partial D$ be a fixed point.
We consider an open cone $G$ in the complement of $\Bar{D}$ at a distance $r$
from the boundary $\partial D$, such that the distance between the cone and
the boundary equals the length of the linear segment
connecting $x_{0}$ with the vertex of the cone at $x_{r}$.
In fact, we may choose an angle $\theta$ and an axis for the cone that can be kept
fixed for all $r$ small enough and the above mentioned property holds.
It is quite clear that this can be done for some truncated cone.
So first we assume that the full cone $G$ with angle $\theta$ and vertex
$x_{r}$ lies in $D^{c}$.
Let $\eta(G^{c})$ denote the first hitting time of $G$.
Since a translation of coordinates does not affect the first hitting time,
we may assume that $x_{r}=0$.
Then from Theorem~\ref{hernan}, there exists $p\in(0,p_\alpha(\theta))$, satisfying
\begin{align*}
\Exp_{x_{0}}\bigl[\bigl(\eta(G^{c})\bigr)^{p}\bigr]\;=\;\abs{x_{0}}^{\alpha p}\,
\Exp_{\frac{x_{0}}{\abs{x_{0}}}}\bigl[\bigl(\eta(G^{c})\bigr)^{p}\bigr]
\;<\; \infty\,,
\end{align*}
where we used the property \eqref{ee3.15}.
Since $G$ is open, by upper semi-continuity we have
\begin{equation*}
\sup\;\bigl\{\Exp_{z}\bigl[\bigl(\eta(G^{c})\bigr)^{p}\bigr]\,
\colon z\in G^{c},\, \abs{z}=1\bigr\}\;<\;\infty\,.
\end{equation*}
Therefore we can find a constant $\kappa>0$ not depending on $r$
(for $r$ small) such that
\begin{equation}\label{ee3.16}
\Exp_{x_{0}}\bigl[\bigl(\eta(G^{c})\bigr)^{p}\bigr]
\;\le\;\kappa |r|^{\alpha p}\,.
\end{equation}
Let $\alpha'\in (1,\alpha)$.
Then by \eqref{ee3.16}, for any $\varepsilon>0$,
we may choose $r$ small enough so that
\begin{equation}\label{ee3.17}
\Prob_{x_{0}}\bigl(\eta(G^{c})>r^{\alpha'}\bigr)\;\le\;
\kappa r^{(\alpha-\alpha')p}\;<\;\varepsilon\,.
\end{equation}
Using Condition~\ref{cond1} and \eqref{ee3.12}, \eqref{ee3.14}, we have
\begin{equation}\label{ee3.18}
\sup_{s\in[0, r^{\alpha'}]}\;\abs{X_{s}-Y_{s}}\;\le\; M r^{\alpha'}\,,
\end{equation}
with probability $1$.
Hence on $\{\eta(G^{c})\le r^{\alpha'}\}$ we have 
$|Y_{\eta(G^{c})}-X_{\eta(G^{c})}|\le M r^{\alpha'}$ by \eqref{ee3.18}.
But $Y_{\eta(G^{c})}\in G$ and $\dist(x_{0}, G)=r$.
Since $Mr^{\alpha'}< r$ for $r$ small enough we have
$X_{\eta(G^{c})}\in (\Bar{D})^{c}$ on 
$\{\eta(G^{c})\le r^{\alpha'}\}$.
Therefore from \eqref{ee3.17} we obtain
\begin{equation*}
\Prob_{x_{0}}\bigl(\tau(\Bar{D})>r^{\alpha'}\bigr)\;<\;\varepsilon
\end{equation*}
for all $r$ small enough.
This concludes the proof for the case when we can fit a whole cone in $D^{c}$
near $x_{0}$.
For any other scenario we can modify the domain locally around $x_{0}$ and
deduce that the first exit time from the new domain is $0$.
We use Lemma~\ref{L3.5} to assert that with high probability the paths spend
$r^{\alpha}$ amount of time in a ball of radius of order $r$.
Combining these two facts concludes the proof.
\end{proof}

\begin{remark}
The result of Lemma~\ref{L3.6} still holds if $X$ satisfies
\eqref{ee3.12} in a weak-sense 
for some locally bounded measurable drift $b$ (see also \cite{chen-wang}).
\end{remark}

The following corollary follows from Lemma~\ref{L3.6}.

\begin{corollary}\label{C3.1}
Under the process $X$ defined in \eqref{ee3.12}, 
for any bounded domain $D$ with $C^{1}$ boundary,
$\Prob_{x}\bigl(\tau(D)=\tau(\Bar{D})\bigr)=1$ for all $x\in\Bar{D}$.
\end{corollary}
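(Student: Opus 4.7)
My plan is to prove the claimed equality $\tau(D)=\tau(\Bar{D})$ by establishing the non-trivial inequality $\tau(\Bar{D})\le\tau(D)$ almost surely; the opposite inequality is immediate from $D\subset\Bar{D}$. The argument will rely on a pathwise case analysis together with the strong Markov property at $\tau(D)$ and Lemma~\ref{L3.6}.

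First I would dispose of the case $x\in\partial D$. Since $\partial D$ is of class $C^1$, the exterior cone condition holds at every boundary point, so Lemma~\ref{L3.6} applies and yields $\Prob_x(\tau(\Bar{D})>0)=0$. Monotonicity of exit times then forces $\tau(D)=\tau(\Bar{D})=0$ almost surely.

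For $x\in D$, I would decompose according to the value of $X_{\tau(D)}$. Because $D$ is open and the trajectories are c\`adl\`ag, right-continuity ensures $X_{\tau(D)}\in D^c=\partial D\cup\Bar{D}^c$. On the event $\{X_{\tau(D)}\in\Bar{D}^c\}$ the inequality $\tau(\Bar{D})\le\tau(D)$ is immediate from the definition. On the remaining event $\{X_{\tau(D)}\in\partial D\}$ I would apply the strong Markov property of $X$ at the $\{\calF_{t}\}$-stopping time $\tau(D)$: conditionally on $X_{\tau(D)}=y\in\partial D$, the shifted process $(X_{\tau(D)+t})_{t\ge0}$ has law $\Prob_y$, and Lemma~\ref{L3.6} applied at $y$ gives $\Prob_y(\tau(\Bar{D})>0)=0$. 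Hence the additional time needed to exit $\Bar{D}$ beyond $\tau(D)$ vanishes almost surely on this event as well, and combining the two events gives $\tau(\Bar{D})=\tau(D)$ $\Prob_x$-a.s.

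The main point requiring care is the step on $\{X_{\tau(D)}\in\partial D\}$, where one must marry the strong Markov property with Lemma~\ref{L3.6} at a random boundary point: this is legitimate precisely because $\partial D$ being $C^1$ provides a \emph{uniform} exterior cone at every boundary point, so the conclusion of Lemma~\ref{L3.6} holds at each possible value of $X_{\tau(D)}\in\partial D$. Beyond this observation, the proof requires no further estimates; the result follows by combining the contributions of $\{X_{\tau(D)}\in\Bar{D}^c\}$ and $\{X_{\tau(D)}\in\partial D\}$ with the already-treated case $x\in\partial D$.
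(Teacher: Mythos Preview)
Your proof is correct and follows exactly the route the paper intends: the paper states only that the corollary ``follows from Lemma~\ref{L3.6}'' without further detail, and your argument---treating $x\in\partial D$ directly via Lemma~\ref{L3.6}, and for $x\in D$ splitting on whether $X_{\tau(D)}$ lands in $\Bar{D}^{c}$ or on $\partial D$, then invoking the strong Markov property together with Lemma~\ref{L3.6} at the random boundary point---is precisely the natural way to fill in that one-line justification.
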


\begin{lemma}\label{L3.7}
Under the process $X$ defined in \eqref{ee3.12}, for any bounded domain $D$
with $C^{1}$ boundary, and
$x\in D$, we have
\begin{align*}
\Prob_{x}\bigl(X_{\tau(D)-}\in\partial D,\, X_{\tau(D)}\in \Bar{D}^{c}\bigr)
&\;=\;0\,,\\[5pt]
\Prob_{x}\bigl(X_{s-}\in\partial D,\, X_{s}\in D,\,
X_{t}\in\Bar{D}~\text{for all}~t\in[0,s]\bigr)
&\;=\;0\,.
\end{align*}
\end{lemma}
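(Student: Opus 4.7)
The plan is to reduce both statements to the single claim that, with $\Prob_{x}$-probability one, the process $X$ has no jump in $[0,\tau(D)]$ whose left limit lies on $\partial D$. The first statement then follows because $\partial D\cap\Bar{D}^{c}=\emptyset$, so the event $\{X_{\tau(D)-}\in\partial D,\,X_{\tau(D)}\in\Bar{D}^{c}\}$ forces a jump at $\tau(D)$ from a point of $\partial D$. For the second statement, the hypothesis $X_{t}\in\Bar{D}$ on $[0,s]$ forces $\tau(\Bar{D})\ge s$, hence by Corollary~\ref{C3.1}, $\tau(D)\ge s$; combined with $X_{s}\in D$ and $\partial D\cap D=\emptyset$, the event again requires a jump at some time $s\le\tau(D)$ from $\partial D$ into $D$.

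The first ingredient is the almost-sure identity
\begin{equation*}
\int_{0}^{\tau(D)}\bm1_{\{X_{s}\in\partial D\}}\,\D{s}\;=\;0\,,
\end{equation*}
which holds directly from the definition of $\tau(D)$: for every $s<\tau(D)$ we have $X_{s}\in D$, hence $X_{s}\notin\partial D$, while $\{\tau(D)\}$ has Lebesgue measure zero.

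The second ingredient is the L\'{e}vy-system formula (Proposition~\ref{levy-system}), applied with a decomposition that sidesteps the singularity of the $\alpha$-stable kernel at points of $\partial D$. For $n\in\NN$, let $D_{n}\df\{z\in D:\dist(z,\partial D)>1/n\}$ and $E_{n}\df\{z\in\Bar{D}^{c}:\dist(z,\partial D)>1/n\}$; since $D$ and $\Bar{D}^{c}$ are open, $D=\bigcup_{n}D_{n}$ and $\Bar{D}^{c}=\bigcup_{n}E_{n}$. For $y\in\partial D$ and $z\in D_{n}\cup E_{n}$ we have $\abs{z-y}\ge 1/n$, so
\begin{equation*}
\sup_{y\in\partial D}\,\int_{D_{n}\cup E_{n}}\frac{c(d,\alpha)}{\abs{z-y}^{d+\alpha}}\,\D{z}\;\le\;K\,n^{\alpha}
\end{equation*}
for some constant $K$. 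Applying Proposition~\ref{levy-system} with $A=\partial D$ and $B=D_{n}$ (respectively $B=E_{n}$), then optional sampling at the bounded stopping time $\sigma_{T}\df\tau(D)\wedge T$, gives
\begin{equation*}
\begin{split}
\Exp_{x}\biggl[\sum_{s\le\sigma_{T}}\bm1_{\{X_{s-}\in\partial D,\,X_{s}\in B\}}\biggr]
&\;=\;\Exp_{x}\biggl[\int_{0}^{\sigma_{T}}\bm1_{\{X_{s}\in\partial D\}}\int_{B}\frac{c(d,\alpha)}{\abs{z-X_{s}}^{d+\alpha}}\,\D{z}\,\D{s}\biggr]\\
&\;\le\; Kn^{\alpha}\,\Exp_{x}\biggl[\int_{0}^{\sigma_{T}}\bm1_{\{X_{s}\in\partial D\}}\,\D{s}\biggr]\;=\;0\,.
\end{split}
\end{equation*}
As the count on the left is a nonnegative integer, it vanishes almost surely. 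Letting $T\to\infty$ and taking a countable union over $n$ eliminates every jump in $[0,\tau(D)]$ from $\partial D$ into $D\cup\Bar{D}^{c}$, which yields the claim.

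The main delicate point is the singularity of the $\alpha$-stable kernel at $\partial D$: for $y\in\partial D$ one has $\int_{D}\abs{z-y}^{-d-\alpha}\,\D{z}=\infty$, so the L\'{e}vy-system compensator is of $0\cdot\infty$ type when applied directly with $B=D$ or $B=\Bar{D}^{c}$. The decomposition into shells $\{\dist(\cdot,\partial D)>1/n\}$ makes the compensator finite and then the trivial Lebesgue-null bound on $\{s\in[0,\tau(D)]:X_{s}\in\partial D\}$ annihilates it.
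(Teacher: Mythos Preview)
Your proof is correct and follows the same overall strategy as the paper: apply the L\'evy-system formula to the pair $A=\partial D$, $B=\{z:\dist(z,\partial D)>1/n\}$ so that the compensator kernel is bounded, show the compensator vanishes, and conclude by exhausting $D\cup\Bar{D}^{c}$ with the shells.

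There is one genuine difference worth highlighting. The paper integrates the compensator over the deterministic interval $[0,t]$ and then argues that $\Exp_{x}\bigl[\int_{0}^{t}\bm1_{\{X_{s}\in\partial D\}}\,\D{s}\bigr]=0$ by invoking the fact (from \cite{bogdan-jakubowski}) that $X_{s}$ has a density for each $s>0$, so $\Prob_{x}(X_{s}\in\partial D)=0$. You instead stop at $\sigma_{T}=\tau(D)\wedge T$ and observe that $\int_{0}^{\sigma_{T}}\bm1_{\{X_{s}\in\partial D\}}\,\D{s}=0$ \emph{pathwise}, simply because $X_{s}\in D$ for every $s<\tau(D)$ by the very definition of the exit time. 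This is more elementary: it removes the dependence on the heat-kernel estimates and would work for any strong Markov process with the L\'evy-system formula. The price is that you need optional sampling at the bounded stopping time $\sigma_{T}$ rather than evaluating the martingale at a deterministic time, but since the compensator is bounded on $[0,T]$ this is harmless. Your reduction of the second statement via Corollary~\ref{C3.1} is also clean and is exactly what the paper has in mind when it says the second equality follows along the same lines.
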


\begin{proof}
We only prove the first equality,
as the proof for the second one follows along the same lines.
Condition~\ref{cond1} implies that $X_{t}$ has a density
for every $t>0$ \cite{bogdan-jakubowski}.
We let
\begin{equation*}
\Hat{D}_{R}\;\df\;\{z\in D^{c}:\,\dist(z, D)\ge R\}\,. 
\end{equation*}
It is enough to prove that
$\Prob_{x}\bigl(X_{\tau(D)-}\in\partial D,\, X_{\tau(D)}\in \Hat{D}_{R}\bigr)=0$
for every $R>0$.
For any $t>0$, we obtain by Proposition~\ref{levy-system} that
\begin{align*}
\Prob_{x}\bigl(X_{t\wedge\tau(D)-}\in\partial D,\, X_{t\wedge\tau(D)}
\in \Hat{D}_{R}\bigr)
&\;\le\; 
\Exp_{x}\Biggl[\sum_{s\le t}
\bm1_{\{X_{s-}\in\partial D,\, X_{s}\in \Hat{D}_{R}\}}\biggr]
\\[5pt]
&\;=\; c(d,\alpha)\;\Exp_{x}\biggl[\int_{0}^{t} \bm1_{\{X_{s}\in\partial D\}}
\int_{\Hat{D}_{R}}\frac{1}{\abs{X_{s}-z}^{d+\alpha}}\,\D{z}\,\D{s}\biggr]
\\[5pt]
&\;\le\; \frac{\kappa}{R^{\alpha}}\,\Exp_{x}
\biggl[\int_{0}^{t} \bm1_{\{X_{s}\in\partial D\}}\,\D{s}\biggr]
\end{align*}
for some constant $\kappa$.
But the term on the right hand side
of the above inequality is $0$ by the fact the $X_{s}$ has density.
Hence
$\Prob_{x}\bigl(X_{t\wedge\tau(D)-}\in\partial D,\,
X_{t\wedge\tau(D)}\in \Hat{D}_{R}\bigr)=0$ for any $t>0$.
Letting $t\to \infty$ completes the proof.
\end{proof}

\medskip

\begin{proof}[Proof of Theorem~\ref{T3.2}]
Uniqueness follows by the comparison principle in \cite[Corollary 2.9]{hector}.
Since $f$ and $g$ are bounded, it follows from Lemma~\ref{L3.1} that $u$
is bounded.
Also in view of Lemma~\ref{L3.6}, $u(x)=g(x)$ for $x\in \overline{D^{c}}$.
First we show that $u$ is continuous in $D$.
Let $x_{n}\to x$ in $D$ as $n\to\infty$.
To simplify the notation, we let $\tau^{n}$ denote the first exit time
from $D$ for the process $X^{n}$ that starts at $x_{n}$.
Similarly, $\tau$ corresponds to the process $X$ that starts at $x$.
From Lemma~\ref{L3.4} we obtain
\begin{equation*}
\Exp\biggl[\sup_{s\in [0, T+1]}\;\abs{X^{n}_{s}-X_{s}}^{2}\biggr]
\;\xrightarrow[n\to\infty]{}\; 0\,.
\end{equation*}
Passing to a subsequence we may assume that 
\begin{equation}\label{ee3.19}
\sup_{s\in [0, T+1]}\;\abs{X^{n}_{s}-X_{s}}\;\to\; 0
\qquad\text{as $n\to\infty$, a.s.}
\end{equation}
Recall the definition of $D_{\varepsilon}$ in \eqref{ee3.11}.
It is evident that, for any $\varepsilon>0$, \eqref{ee3.19} implies that
\begin{equation*}
\liminf_{n\to\infty}\; \tau^{n}\wedge T\;\le\; \tau(D_{\varepsilon})\wedge T\,.
\end{equation*}
Since $\tau(D_{\varepsilon})
\;\xrightarrow[\varepsilon\searrow0]{}\;\tau(\Bar{D})$ a.s.,
we obtain
\begin{equation}\label{ee3.20}
\liminf_{n\to\infty}\; \tau^{n}\wedge T\;\le\; \tau(\Bar{D})\wedge T\,.
\end{equation}
On the other hand,
$\tau(\Bar{D})\,=\,\tau(D)$ a.s.\ by Corollary~\ref{C3.1},
and thus we obtain from \eqref{ee3.20} that
\begin{equation*}
\liminf_{n\to\infty}\; \tau^{n}\wedge T\;\le\; \tau\wedge T
\quad\text{a.s.}
\end{equation*}
The reverse inequality, i.e.,
\begin{equation*}
\liminf_{n\to\infty}\; \tau^{n}\wedge T\;\ge\; \tau\wedge T
\quad\text{a.s.}\,,
\end{equation*}
is evident from \eqref{ee3.19}.
Hence we have
\begin{equation}\label{ee3.21}
\lim_{n\to\infty}\; \tau^{n}\wedge T\;=\;\tau\wedge T \,,
\end{equation}
with probability $1$.
It then follows by \eqref{ee3.19} and \eqref{ee3.21} that
\begin{equation*}
\Exp \biggl[\int_{0}^{\tau^{n}\wedge T} f(X^{n}_{s})\,\D{s} -
\int_{0}^{\tau\wedge T} f(X_{s})\,\D{s}\biggr]\;\xrightarrow[n\to\infty]{}\;0
\qquad \forall\,T>0\,.
\end{equation*}
By Lemma~\ref{L3.1} we can take limits as $T\to\infty$ to obtain,
\begin{equation}\label{ee3.22}
\Exp\, \biggl[\int_{0}^{\tau^{n}} f(X^{n}_{s})\,\D{s} -
\int_{0}^{\tau} f(X_{s})\,\D{s}\biggr]\;\xrightarrow[n\to\infty]{}\;0\,.
\end{equation}
Since $g$ is bounded, by Lemma~\ref{L3.1} we obtain
\begin{equation*}
\Exp\,\bigl[\bm1_{\{\tau\ge T\}}\,\babs{g(X^{n}_{\tau^{n}})-g(X_{\tau})}
\bigr]\;\le\;
2\,\norm{g}_{\infty}\,\Prob(\tau\ge T)\;\xrightarrow[T\to\infty]{}\;0\,.
\end{equation*}
From now on we consider a continuous extension of $g$ on $\Rd$, also
denoted by $g$.
We use the triangle inequality
\begin{align}\label{ee3.23}
\Exp\,\bigl[\bm1_{\{\tau< T\}}\,\babs{g(X^{n}_{\tau^{n}})-g(X_{\tau})}\bigr]
\;&\le\;
\Exp\,\bigl[\bm1_{\{\tau< T\}}\,\babs{g(X^{n}_{\tau^{n}})-g(X_{\tau^{n}})}\bigr]
+\Exp\,\bigl[\bm1_{\{\tau< T\}}\,\babs{g(X_{\tau^{n}})-g(X_{\tau})}\,\bigr]
\nonumber\\[5pt]
&\le\;
\Exp\,\bigl[\bm1_{\{\tau^{n}< T+\varepsilon\}}\bm1_{\{\tau< T\}}\,
\babs{g(X^{n}_{\tau^{n}})-g(X_{\tau^{n}})}\bigr]\nonumber\\[5pt]
&\mspace{10mu}+
2\norm{g}_{\infty}\,\Prob(\tau^{n}\ge\tau+\varepsilon)
+
\Exp\,\bigl[\bm1_{\{\tau< T\}}\,
\babs{g(X_{\tau^{n}})-g(X_{\tau})}\bigr]\,.
\end{align}
The second term on the right hand side of
\eqref{ee3.23} tends to $0$ as $n\to\infty$, by \eqref{ee3.21}.
The first term is dominated by
\begin{equation*}
\Exp\,\biggl[\biggl(\sup_{0\le t\le T+\varepsilon}\;
\babs{g(X^{n}_{t})-g(X_{t})}\biggr)\,
\bm1_{\{\tau^{n}< T+\varepsilon\}}\bm1_{\{\tau< T\}}\biggr]\,,
\end{equation*}
so it also tends to $0$ as $n\to\infty$,
by \eqref{ee3.19}, and the
continuity and boundedness of $g$.
For the third term, we write
\begin{multline}\label{ee3.24}
\Exp\,\bigl[\bm1_{\{\tau< T\}}\,
\babs{g(X_{\tau^{n}})-g(X_{\tau})}\bigr]\;\le\;
\Exp\,\bigl[\bm1_{\{\tau< T\}}\,
\babs{g(X_{\tau^{n}\wedge T})-g(X_{\tau\wedge T})}\bigr]\\[5pt]
+\Exp\,\bigl[\bm1_{\{\tau< T\}}\,
\babs{g(X_{\tau^{n}})-g(X_{\tau^{n}\wedge T})}\bigr]\,.
\end{multline}
The first term on the right hand side of \eqref{ee3.24}
tends to $0$, as $n\to\infty$, by \eqref{ee3.21}, Lemma~\ref{L3.7},
and the continuity and boundedness of $g$.
The second term also tends to $0$ as $T\to\infty$, uniformly in $n$,
by Lemma~\ref{L3.1}.
Combining the above, we obtain
\begin{equation}\label{ee3.25}
\Exp\,\bigl[\babs{g(X^{n}_{\tau^{n}})-g(X_{\tau})}\bigr]
\;\xrightarrow[T\to\infty]{}\;0\,.
\end{equation}
By \eqref{E3.1}, \eqref{ee3.22} and \eqref{ee3.25}, it follows
that $u(x_{n})\to u(x)$, as $n\to \infty$, which shows that
$u$ is continuous.

Next we show that $u$ is a viscosity solution to \eqref{ee3.13}.
By the strong Markov property of $X$, for any
$t\ge 0$, we have
\begin{equation}\label{ee3.26}
u(x)\;=\;
\Exp_{x}\biggl[\int_{0}^{\tau(D)\wedge t}f(X_{s})\,\D{s}
+u(X_{\tau(D)\wedge t})\biggr]\,.
\end{equation}
Let $\varphi\in C^{2}_{b}(\Rd)$ be such that $\varphi(x)=u(x)$ and
$\varphi(z)>u(z)$ for all $z\in\Rd\setminus\{x\}$.
Then by \eqref{ee3.26} and It\^o's formula we have
\begin{align*}
\Exp_{x}\biggl[\int_{0}^{\tau(D)\wedge t}\I\varphi(X_{s})\,\D{s}\biggr]
& \;=\;\Exp_{x}\bigl[\varphi(X_{\tau(D)\wedge t})\bigr]-\varphi(x)
\\[5pt]
&\;\ge\; \Exp_{x}\bigl[u(X_{\tau(D)\wedge t})\bigr]-u(x)\\[5pt]
&\;=\;-\,\Exp_{x}\biggl[\int_{0}^{\tau(D)\wedge t}f(X_{s})\,\D{s}\biggr]\,.
\end{align*}
Dividing both sides by $t$ and letting $t\to 0$ we obtain
$\I\varphi(x)\ge -f(x)$ and thus $u$ is a sub-solution.
Similarly we can show that $u$ is super-solution and so is a viscosity solution. 
\end{proof}

\smallskip
The following theorem proves the stability of the
viscosity solutions over a convergent sequence of domains.

\begin{theorem}
Let $D_{n}$, $D$ be a collection of $C^{1}$ domains such that $D_{n}\to D$ in the
Hausdorff topology, as $n\to\infty$. Let $f, \, g\in C_b(\Rd)$.
Then $u_{n}\to u$, as $n\to\infty$,
where $u_{n}$ and $u$ are the viscosity solutions of
\eqref{ee3.13} in $D_{n}$ and $D$, respectively.
\end{theorem}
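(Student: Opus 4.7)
The plan is to adapt the continuity argument from the proof of Theorem~\ref{T3.2}, with the convergent sequence of domains $D_n$ playing the role previously played by the sequence of varying initial points. By Theorem~\ref{T3.2}, for each fixed $x\in\Rd$ we have the stochastic representations
\begin{equation*}
u_n(x)\;=\;\Exp_{x}\biggl[\int_{0}^{\tau_n}f(X_{s})\,\D{s}+g(X_{\tau_n})\biggr]\,,\qquad
u(x)\;=\;\Exp_{x}\biggl[\int_{0}^{\tau}f(X_{s})\,\D{s}+g(X_{\tau})\biggr]\,,
\end{equation*}
where $\tau_n\df\tau(D_n)$, $\tau\df\tau(D)$, and $X$ starts from $x$ under $\Prob_x$. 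It thus suffices to show that these expectations converge as $n\to\infty$.

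The central step is to establish that $\tau_n\to\tau$ almost surely. Since $D_n\to D$ in the Hausdorff topology and both are $C^{1}$ domains, for each $\varepsilon>0$ one may arrange $D^{-\varepsilon}\subset D_n\subset D_\varepsilon$ for all $n$ sufficiently large, where $D_\varepsilon$ is as in \eqref{ee3.11} and $D^{-\varepsilon}\df\{z\in D\,:\,\dist(z,\partial D)>\varepsilon\}$. This sandwiches $\tau(D^{-\varepsilon})\le\tau_n\le\tau(D_\varepsilon)$. Letting $\varepsilon\searrow0$, the upper bound decreases to $\tau(\bar D)$, which equals $\tau$ a.s.\ by Corollary~\ref{C3.1}, while the lower bound increases to $\tau$ a.s., using the fact that the process cannot remain arbitrarily close to $\partial D$ without exiting (a consequence of Lemma~\ref{L3.6} and the absolute continuity of $X_t$ invoked in Lemma~\ref{L3.7}). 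A standard $\varepsilon$--$N$ argument then yields $\tau_n\to\tau$ almost surely.

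To pass to the limit in the expectations, I note that all $D_n$ are eventually contained in a common bounded set, so by Lemma~\ref{L3.1} the $\tau_n$ are uniformly bounded in $L^{p}$ for every $p\ge1$; dominated convergence then handles the integral $\int_{0}^{\tau_n}f(X_s)\,\D{s}\to\int_{0}^{\tau}f(X_s)\,\D{s}$. For the boundary term, the c\`adl\`ag property of $X$ together with $\tau_n\to\tau$ yields $X_{\tau_n}\to X_{\tau}$ almost surely off of the null events ruled out by Lemma~\ref{L3.7}, notably $\{X_{\tau-}\in\partial D,\,X_\tau\in\bar{D}^{c}\}$. Since $g$ admits a bounded continuous extension to $\Rd$, one concludes $\Exp_x[g(X_{\tau_n})]\to\Exp_x[g(X_\tau)]$ exactly as in the closing step of the proof of Theorem~\ref{T3.2}, giving $u_n(x)\to u(x)$ for every $x\in\Rd$.

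The principal obstacle is the convergence of the exit locations $X_{\tau_n}\to X_\tau$: although $\tau_n\to\tau$, at time $\tau_n$ the process may land at points close to but distinct from $X_\tau$, so one must rule out scenarios where the path grazes $\partial D$ without a clean exit jump, or exits through a piece of $\partial D_n$ that is folded near $\partial D$. This is precisely where Lemma~\ref{L3.7} and the L\'evy system formula from Proposition~\ref{levy-system}, combined with the $C^{1}$ regularity of the boundaries (so that $\partial D_n\to\partial D$ in Hausdorff distance as well), enter to guarantee that the limiting exit occurs through a genuine jump into $\bar{D}^{c}$.
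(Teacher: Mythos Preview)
Your proposal is correct and follows essentially the same approach as the paper. The paper's own proof is a two-sentence sketch that reduces the claim to establishing $\tau(D_{n})\wedge T\to\tau(D)\wedge T$ a.s.\ and $X_{\tau(D_{n})}\to X_{\tau(D)}$ on $\{\tau(D)<T\}$, deferring all details to the argument of Theorem~\ref{T3.2}; you have simply written out those details, using the same ingredients (the sandwich $D^{-\varepsilon}\subset D_{n}\subset D_{\varepsilon}$, Corollary~\ref{C3.1}, Lemma~\ref{L3.7}, and the uniform moment bounds from Lemma~\ref{L3.1}).
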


\begin{proof}
We only need to establish that for any $T>0$,
$\tau(D_{n})\wedge T\to\tau(D)\wedge T$
with probability $1$, and that $X_{\tau(D_{n})}\to X_{\tau(D)}$
on $\{\tau(D)<T\}$, as $n\to\infty$.
This can be shown following the same argument as in the proof of
Theorem~\ref{T3.2}.
\end{proof}

\section{The Harnack property for operators containing a drift term}
\label{sec-harnack}
In this section, we prove a Harnack inequality for harmonic functions.
The classes of operators considered are summarized in the following definition.

\begin{definition}\label{D4.1}
With $\lambda$ as in Definition~\ref{D-hkernel},
we let $\fL(\lambda)$,
denote the class of operators $\I\in\fL$ satisfying
\begin{equation*}
\abs{b(x)}\;\le\;\lambda_{D}\,,\quad\text{and}\quad
\lambda^{-1}_{D}\;\le\; k(x,z)\;\le\; \lambda_{D} \qquad\forall\,  x\in D\,,~
z\in\Rd\,,
\end{equation*}
for a bounded domain $D$.
As in Definition~\ref{D3.2},
the subclass of $\fL(\lambda)$ consisting of those $\I$ satisfying
$k(x,z)=k(x, -z)$ is denoted by $\fLsym(\lambda)$.
Also by $\mathfrak{I}_{\alpha}(\theta,\lambda)$ we denote the subset of
$\fL(\lambda)$ satisfying
\begin{equation*}
\babss{\int_{\Rds} \bigl(\abs{z}^{\alpha-\theta}\wedge1\bigr)\,\,
\frac{\abs{k(x,z)-k(x,0)}}{\abs{z}^{d+\alpha}}\,\D{z}}
\;\le\;\lambda_{D}\qquad \forall x\in D\,,
\end{equation*}
for any bounded domain $D$.
\end{definition}

A measurable function $h:\Rd\to\R$ is said to be \emph{harmonic} with
respect to $\I$ in a domain $D$
if for any bounded subdomain $G\subset D$, it satisfies
\begin{equation*}
h(x)\;=\;\Exp_{x}[h(X_{\tau(G)})] \qquad \forall\, x\in G\,,
\end{equation*}
where $(X, \Prob_{x})$ is a strong Markov process associated with
$\I$.
 
\begin{theorem}\label{T4.1}
Let $D$ be a bounded domain of $\Rd$ and $K\subset D$ be compact.
Then there exists a constant $C_{H}$ depending on $K$, $D$ and $\lambda$,
such that any bounded, nonnegative function which
is harmonic in $D$ with respect to an operator
$\I\in\fLsym(\lambda)\cup\mathfrak{I}_{\alpha}(\theta,\lambda)$,
$\theta \in (0, 1)$,
satisfies
\begin{equation*}
h(x)\;\le\; C_{H}\,  h(y)\qquad \text{for all}~ x, y\in K\,. 
\end{equation*}
\end{theorem}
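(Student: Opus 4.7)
The plan is to verify the abstract sufficient conditions of \cite{song-vondracek-04} for the strong Markov process $X$ associated with $\I$, and then invoke their general theorem. In our setting these hypotheses localize: fixing $R$ with $D\subset B_{R}$, the compact $K\subset D$, and a sufficiently small $r_{0}>0$, it suffices to produce constants $c_{1},c_{2},c_{3},c_{4}>0$ (depending only on $K$, $D$ and $\lambda$) such that, uniformly in $x\in K$, $r\in(0,r_{0})$, and Borel sets $A\subset B_{2R}\setminus B_{2r}(x)$, one has
\begin{equation*}
(\text{A1})~\Exp_{x}[\tau(B_{r}(x))]\le c_{1} r^{\alpha}\,,\qquad
(\text{A2})~\Prob_{x}\bigl(\tau(B_{r}(x))>c_{2} r^{\alpha}\bigr)\ge c_{3}\,,
\end{equation*}
\begin{equation*}
(\text{A3})~\Prob_{x}\bigl(X_{\tau(B_{r}(x))}\in A\bigr)\;\ge\; c_{4}\,r^{\alpha}\,
\frac{\abs{A}}{R^{d+\alpha}}\,.
\end{equation*}

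For (A1) and (A2) I would imitate the construction used in Lemma~\ref{L3.5}. Let $f\in C^{2}_{b}(\Rd)$ be radial, nondecreasing, with $f(y)=\abs{y}^{2}$ for $\abs{y}\le 1/2$ and $f(y)=1$ for $\abs{y}\ge 1$, and set $f_{r}(y)\df f((y-x)/r)$. Applying $\I$ on $B_{r}(x)$ and using the lower bound $k\ge\lambda_{D}^{-1}$ from Definition~\ref{D4.1}, the quadratic piece near the origin produces a jump contribution bounded below by $c\, r^{-\alpha}$; the drift term $b\cdot\grad f_{r}$ is $O(r^{-1})$, hence dominated because $\alpha>1$. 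Dynkin's formula then yields (A1). The reverse bound (A2) follows from the two-sided estimate $\babs{\I f_{r}}\le c_{5} r^{-\alpha}$, again a direct consequence of the uniform upper bounds on $k$ and $b$, combined with a standard Chebyshev argument applied to Dynkin's identity.

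For (A3) I use the L\'evy system identity of Proposition~\ref{levy-system}. Its compensator representation gives, for any Borel $A\subset B_{2R}\setminus B_{2r}(x)$,
\begin{equation*}
\Prob_{x}\bigl(X_{\tau(B_{r}(x))}\in A\bigr)
\;\ge\; \Exp_{x}\biggl[\int_{0}^{\tau(B_{r}(x))}\int_{A}
\uppi(X_{s},z-X_{s})\,\D{z}\,\D{s}\biggr].
\end{equation*}
Since $X_{s}\in B_{R}$ and $z\in B_{2R}$ imply $\abs{z-X_{s}}\le 3R$, the lower bound $k\ge\lambda_{D}^{-1}$ yields $\uppi(X_{s},z-X_{s})\ge \lambda_{D}^{-1}(3R)^{-d-\alpha}$, and together with (A2) this furnishes (A3).

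The main obstacle is the non-symmetric class $\mathfrak{I}_{\alpha}(\theta,\lambda)$, where the antisymmetric part of $\uppi$ can \emph{a priori} behave like an additional drift of critical order and threaten the bound on $\I f_{r}$. The remedy is the integrability condition in Definition~\ref{D4.1}: decomposing $\uppi(x,z)=\frac{k(x,0)}{\abs{z}^{d+\alpha}}+\frac{k(x,z)-k(x,0)}{\abs{z}^{d+\alpha}}$, the second kernel defines an operator of order at most $\alpha-\theta<\alpha$, so its action on $f_{r}$ contributes only $O(r^{-(\alpha-\theta)})$ and is absorbed into the dominant $r^{-\alpha}$ term. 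In the symmetric case $\fLsym(\lambda)$ the principal value of the first-order expansion vanishes by symmetry and the same test function works directly. Once (A1)--(A3) are verified uniformly over both classes, the Harnack inequality with the claimed constant $C_{H}$ is a direct consequence of the main theorem of \cite{song-vondracek-04}.
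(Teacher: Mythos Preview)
Your overall strategy---verify the abstract hypotheses of \cite{song-vondracek-04} and then invoke their Theorem~2.4---is exactly what the paper does, and your handling of the non-symmetric class $\mathfrak{I}_{\alpha}(\theta,\lambda)$ via the decomposition $k(x,z)=k(x,0)+\bigl(k(x,z)-k(x,0)\bigr)$, with the second piece contributing only at order $r^{-\alpha+\theta}$, matches the paper's Lemma~\ref{L4.1}(b) almost verbatim.

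However, the list of conditions you call (A1)--(A3) is not the list in \cite{song-vondracek-04}, and the three estimates you propose do \emph{not} suffice for Harnack. Two ingredients are missing. First, you need a Krylov--Safonov type hitting estimate: for $A\subset B_{r}(x)$ and $z\in B_{2r}(x)$, a lower bound $\Prob_{z}\bigl(\tau(A^{c})<\tau(B_{3r}(x))\bigr)\ge\kappa\,\abs{A}/\abs{B_{r}}$. This is the paper's Lemma~\ref{L4.2}; your (A3), which only bounds the exit distribution on sets \emph{outside} $B_{2r}(x)$, does not give it. Second, you need the \emph{two-sided} comparison of the exit distribution with the L\'evy kernel (the paper's Lemma~\ref{L4.3}): both an upper and a lower bound on $\Exp_{z}\bigl[H(X_{\tau(B_{r}(x))})\bigr]$ in terms of $\Exp_{z}[\tau(B_{r}(x))]\int H(y)\uppi(x,y-x)\,\D{y}$. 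The upper bound is what prevents a harmonic function from concentrating mass in a thin distant shell and blowing up the oscillation; your one-sided lower bound cannot do this. Once you add these two lemmas, both of which follow from the L\'evy-system formula and your exit-time estimates by the standard arguments in \cite{song-vondracek-04,bass-levin}, the proof goes through as you outlined.
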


We prove Theorem~\ref{T4.1} by verifying
the conditions in \cite{song-vondracek-04} where
a Harnack inequality is established for a general class of Markov processes.
We accomplish this through Lemmas~\ref{L4.1}--\ref{L4.3} which follow.
Let us also mention that some of the proof techniques are
standard but we still add them for clarity.
In fact, the Harnack property with non-symmetric kernel is also discussed
in \cite{song-vondracek-04} under some regularity condition on $k(\cdot,\cdot)$
and under the assumption of the existence of a harmonic measure.
Our proof of Lemma~\ref{L4.1}(b) which follows holds under
very general conditions, and does not rely on the existence of a harmonic measure.
In Lemmas~\ref{L4.1}--\ref{L4.3}, $(X, \Prob_{x})$ is a
strong Markov process associated with
$\I\in\fLsym(\lambda)\cup\mathfrak{I}_{\alpha}(\theta,\lambda)$,
and $D$ is a bounded domain.

\begin{lemma}\label{L4.1}
Let $D$ be a bounded domain.
There exist positive constants $\kappa_{2}$ and $r_{0}$ such that for any
$x\in D$ and $r\in (0,r_{0})$,
\begin{itemize}
\item[(a)]
$\inf_{z\in B_{\frac{r}{2}}(x)}\Exp_{z}[\tau(B_{r}(x))]
\ge \kappa^{-1}_{2}r^{\alpha}$,
\item[(b)]
$\sup_{z\in B_{r}(x)}\Exp_{z}[\tau(B_{r}(x))]\le \kappa_{2}\, r^{\alpha}$.
\end{itemize}
\end{lemma}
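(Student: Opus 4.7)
The plan is to prove both bounds via a scaling argument, appealing to the general exit-time estimates already established for operators in $\fL$ (Lemma~\ref{L3.1}, and Lemma~\ref{L3.5} extended via Remark~\ref{R3.2}), noting that both $\fLsym(\lambda)$ and $\mathfrak{I}_{\alpha}(\theta,\lambda)$ are subclasses of $\fL$. I will take $r_{0}\in(0,1]$ small enough so that the drift contribution and the ``compensator mismatch'' arising from scaling are absorbed uniformly in $x\in D$.

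For part (a), observe that $B_{r/2}(z)\subset B_{r}(x)$ whenever $z\in B_{r/2}(x)$, and consequently $\tau(B_{r}(x))\ge \tau(B_{r/2}(z))$. By Lemma~\ref{L3.5} together with Remark~\ref{R3.2}, there exists a constant $\kappa_{1}$, depending on the local bounds of $b$ and $k$ on a neighborhood of $D$, such that
\begin{equation*}
\Prob_{z}\bigl(\tau(B_{r/2}(z))\le t\bigr)\;\le\;\kappa_{1}\,t\,(r/2)^{-\alpha}\,,
\qquad r\in(0,2)\,.
\end{equation*}
Choosing $t:=(r/2)^{\alpha}/(2\kappa_{1})$ gives $\Prob_{z}(\tau(B_{r/2}(z))>t)\ge\tfrac{1}{2}$, and therefore $\Exp_{z}[\tau(B_{r}(x))]\ge \Exp_{z}[\tau(B_{r/2}(z))]\ge t/2\ge \kappa_{2}^{-1}r^{\alpha}$, for a suitable $\kappa_{2}$.

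For part (b), the strategy is to construct a bounded nonnegative barrier $w_{r}$ satisfying $\I w_{r}\ge 1$ on $B_{r}(x)$ and $\sup w_{r}\le \kappa_{2}r^{\alpha}$, so that Dynkin's formula yields
\begin{equation*}
\Exp_{z}\bigl[\tau(B_{r}(x))\wedge t\bigr]\;\le\;\Exp_{z}\bigl[w_{r}(X_{\tau(B_{r}(x))\wedge t})\bigr]\;\le\;\kappa_{2}\,r^{\alpha}\,,
\end{equation*}
after which letting $t\to\infty$ closes the argument. I take $w_{r}(y):=r^{\alpha}f\bigl((y-x)/r\bigr)$, where $f\in C^{2}_{b}(\Rd)$ is radial and nondecreasing with $f(\tilde y)=|\tilde y|^{2}$ for $|\tilde y|\le 2$ and $f\equiv C_{f}$ for $|\tilde y|\ge 3$. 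The change of variables $u:=z/r$ in the nonlocal integral gives, with $\tilde y:=(y-x)/r\in B_{1}$,
\begin{equation*}
\I w_{r}(y)\;=\;r^{\alpha-1}\,b(y)\cdot\grad f(\tilde y)
+\int\bigl[f(\tilde y+u)-f(\tilde y)-\bm1_{\{|ru|\le 1\}}\grad f(\tilde y)\cdot u\bigr]\,\frac{k(y,ru)}{|u|^{d+\alpha}}\,\D{u}\,.
\end{equation*}
The drift is $O(r^{\alpha-1})\to 0$, while the nonlocal term is bounded below by mimicking the proof of Lemma~\ref{L3.1}: using $k\ge\lambda_{D}^{-1}$, the near-field contribution is at least $\lambda_{D}^{-1}\int_{|u|\le R}|u|^{2-d-\alpha}\,\D{u}=c_{d}\lambda_{D}^{-1}R^{2-\alpha}$, which can be made arbitrarily large by choosing $R$ large, independently of $x$ and $r$, and dominates the remaining terms.

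The main obstacle is handling the mismatch of compensators $\bm1_{\{|ru|\le 1\}}$ versus $\bm1_{\{|u|\le 1\}}$ introduced by the rescaling. In the symmetric class $\fLsym(\lambda)$ this is harmless, since the antisymmetric integrand is paired with an even kernel over a symmetric annulus and integrates to zero. In the class $\mathfrak{I}_{\alpha}(\theta,\lambda)$, writing $k(y,ru)=k(y,0)+[k(y,ru)-k(y,0)]$, the first piece vanishes by the same symmetry argument, while the remainder produces a correction of the form $r^{\alpha-1}\grad f(\tilde y)\cdot\int_{r<|z|\le 1}z\,[k(y,z)-k(y,0)]/|z|^{d+\alpha}\,\D{z}$, which by the integrability condition \eqref{ee3.10} is $O(r^{\theta}\vee r^{\alpha-1})$ and thus negligible for $r<r_{0}$ small. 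This yields $\I w_{r}\ge 1$ uniformly on $B_{r}(x)$, completing the proof of (b).
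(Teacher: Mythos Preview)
Your argument tracks the paper's proof closely. Part (a) is identical: use Lemma~\ref{L3.5} (via Remark~\ref{R3.2}) at scale $r/2$ and Markov's inequality. For part (b) the paper also builds a rescaled barrier $g_r(y)=f\bigl((y-x_0)/r\bigr)$ and shows $\I g_r\ge c\,r^{-\alpha}$ on $B_r(x_0)$; the compensator mismatch is handled exactly as you propose (symmetry cancels it in $\fLsym(\lambda)$; in $\mathfrak{I}_{\alpha}(\theta,\lambda)$ one writes $k(y,z)=k(y,0)+\hat k(y,z)$, the $k(y,0)$ piece vanishes by symmetry, and the $\hat k$ piece is controlled by \eqref{ee3.10}, yielding a correction of order $r^{-\alpha+\theta_1}$ for $\theta_1<\theta\wedge(\alpha-1)$, hence lower order).

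There is, however, an inconsistency in your barrier computation. You fix $f$ with $f(\tilde y)=|\tilde y|^{2}$ only on $|\tilde y|\le 2$ and $f\equiv C_f$ on $|\tilde y|\ge 3$, yet then claim the near-field integral $\int_{|u|\le R}\dd f(\tilde y;u)\,k/|u|^{d+\alpha}\,\D u$ is bounded below by $\lambda_D^{-1}\int_{|u|\le R}|u|^{2-d-\alpha}\,\D u$ and can be made arbitrarily large by taking $R$ large. That lower bound needs $\dd f(\tilde y;u)\ge |u|^{2}$ for all $|u|\le R$, $|\tilde y|\le 1$, which with your fixed $f$ holds only for $R\le 1$: on $1<|u|\le 3$ the function is flattening and $\dd f$ can be negative. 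Since that bad annulus contributes a term of size $O(\lambda_D)$ while your good near-field is only $O(\lambda_D^{-1})$, you cannot conclude positivity for general $\lambda_D$.

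Two easy repairs. Either (i) follow Lemma~\ref{L3.1} literally and let $f$ depend on $R$ (take $f=|\cdot|^{2}$ on $B_{2R}$, constant outside $B_{2R+1}$, with $R$ chosen once depending on $\lambda_D$), so the quadratic lower bound on $\dd f$ is valid on $|u|\le R$; or (ii) do what the paper does: choose $f$ radially nondecreasing and \emph{convex on $B_4$} with $\dd f(\tilde y;u)\ge c_1|u|^{2}$ for $|\tilde y|\le 1$, $|u|\le 3$, and split at $|u|=3$. Then the near-field gives a fixed positive constant $\ge c\,\lambda_D^{-1}$, the far-field $|u|>3$ is nonnegative by radial monotonicity (since $|\tilde y+u|\ge |u|-1>2\ge |\tilde y|$), and the drift plus compensator correction are $o(1)$ as $r\to 0$. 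No ``arbitrarily large $R$'' is needed.
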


\begin{proof}
By  Lemma~\ref{L3.5} and Remark~\ref{R3.2} there exists
a constant $\kappa_{1}$ such that
\begin{equation}\label{ee4.1}
\Prob_{x}(\tau(B_{r}(x))\le t)\;\le\; \kappa_{1} t r^{-\alpha}\,,
\end{equation}
for all $t\ge 0$, and all $x\in D_{2}\df\{y: \,\dist(y, D)< 2\}$.
We choose
$t=\frac{r^{\alpha}}{2\kappa_{1}}$.
Then for $z\in B_{\frac{r}{2}}(x)$, we obtain by \eqref{ee4.1} that
\begin{align*}
\Exp_{z}[\tau(B_{r}(x))]&\;\ge\; \Exp_{z}[\tau(B_\frac{r}{2}(z))]\\[5pt]
&\;\ge\; \frac{r^{\alpha}}{2\kappa_{1}}\,
\Prob_{z}\Bigl(\tau(B_\frac{r}{2}(z))>
\frac{r^{\alpha}}{2\kappa_{1}}\Bigr)
\\[5pt]
&\;\ge\; \frac{r^{\alpha}}{4\kappa_{1}}\,.
\end{align*}
This proves the part~(a).

To prove part~(b) we consider 
a radially non-decreasing function $f\in C^{2}_{b}(\Rd)$, which is convex
in $B_{4}$, and satisfies
\begin{equation*}
f(x+z)-f(x)-z\cdot\grad f(x)\;\ge\; c_{1}\abs{z}^{2}
\qquad \text{for}~\abs{x}\le 1\,,\;\abs{z}\le 3\,,
\end{equation*}
for some positive constant $c_{1}$.
For an arbitrary point $x_{0}\in D$, define
$g_{r}(x)\df f(\frac{x-x_{0}}{r})$.
Then for $x\in B_{r}(x_{0})$ and $\I\in\fLsym(\lambda)$ we have
\begin{align*}
\int_{\Rds}\dd g_{r}(x;z)\,\frac{k(x,z)}{\abs{z}^{\alpha+d}}\,\D{z}
&\;=\;\int_{\abs{z}\le 3r}\bigl(g_{r}(x+z)-g_{r}(x)-
z\cdot\grad g_{r}(x)\bigr)
\frac{k(x,z)}{\abs{z}^{\alpha+d}}\,\D{z}\\
&\mspace{200mu}+
\int_{\abs{z}> 3r}\bigl(g_{r}(x+z)-g_{r}(x)\bigr)
\frac{k(x,z)}{\abs{z}^{\alpha+d}}\,\D{z}
\\[5pt]
&\;\ge\;  \frac{c_{1}}{r^{2}}\,\lambda^{-1}_{D}
 \int_{\abs{z}\le 3r} \abs{z}^{2-d-\alpha}\,\D{z}\\[5pt]
&\;=\; c_{2}\,\frac{3^{2-\alpha}}{2-\alpha}\, \lambda^{-1}_{D}\,r^{-\alpha}
\end{align*}
for some constant $c_{2}>0$,
where in the first equality we use the fact that $k(x,z)=k(x, -z)$, and
for the second inequality we use the property that $g(x+z)\ge g(x)$ 
for $\abs{z}\ge 3r$.
It follows that we may choose $r_{0}$ small enough such that 
\begin{equation*}
\I g_{r}(x)\;\ge\; c_{3} r^{-\alpha}
\qquad\text{for all}~r\in (0, r_{0})\,,~x\in B_{r}(x_{0})\,,~\text{and}~
x_{0}\in D\,,
\end{equation*}
with $c_{3}\df \frac{c_{2}}{2}\,\frac{3^{2-\alpha}}{2-\alpha}\,\lambda^{-1}_{D}$.

To obtain a similar estimate for $\I\in\mathfrak{I}_{\alpha}(\theta,\lambda)$
we fix some $\theta_1\in(0, \theta\wedge(\alpha-1))$.
Let $\Hat{k}(x,z)\df k(x,z)-k(x,0)$.
We have
\begin{align*}
\int_{\Rds}\dd g_{r}(x;z)\,\frac{k(x,z)}{\abs{z}^{\alpha+d}}\,\D{z}
&\;=\;\int_{\abs{z}\le 3r}\dd g_{r}(x;z)
\frac{k(x,z)}{\abs{z}^{\alpha+d}}\,\D{z}
 -\int_{3r<\abs{z}<1}z\cdot\grad g_{r}(x)\frac{k(x,z)-k(x,0)}{\abs{z}^{d+\alpha}}\,
 \D{z}\\[5pt]
&\mspace{150mu} +\int_{\abs{z}> 3r}\bigl(g_{r}(x+z)-g_{r}(x)\bigr)
\frac{k(x,z)}{\abs{z}^{\alpha+d}}\,\D{z}\\[5pt]
&\ge\; \frac{c_{1}}{\lambda_{D}\,r^{2}}
\int_{\abs{z}\le 3r} \abs{z}^{2-d-\alpha}\,\D{z}
- \frac{\norm{\grad f}_{\infty}}{r}\int_{3r<\abs{z}<1}\abs{z}
\frac{\abs{\Hat{k}(x,z)}}{\abs{z}^{d+\alpha}}\, \D{z}
\\[5pt]
&\ge\; c_{2}\,\frac{3^{2-\alpha}}{(2-\alpha)\,\lambda_{D}\,r^{\alpha}}
 -\frac{\norm{\grad f}_{\infty}}{r}\int_{3r<\abs{z}<1}
\abs{z}^{\alpha-\theta_{1}}(3r)^{-\alpha+\theta_{1}+1}
\frac{\abs{\Hat{k}(x,z)}}{\abs{z}^{d+\alpha}}\, \D{z}\\[5pt]
&\ge\; c_{2}\,\frac{3^{2-\alpha}}{(2-\alpha)\,\lambda_{D}\,r^{\alpha}}
-\frac{\norm{\grad f}_{\infty}}{r}\int_{3r<\abs{z}<1}
\abs{z}^{\alpha-\theta}(3r)^{-\alpha+\theta_{1}+1}
\frac{\abs{\Hat{k}(x,z)}}{\abs{z}^{d+\alpha}}\, \D{z}\\[5pt]
&\ge\; c_{2}\,\frac{3^{2-\alpha}}{(2-\alpha)\,\lambda_{D}\,r^{\alpha}}
- \kappa(d) 3^{\alpha-\theta_{1}+1}r^{-\alpha+\theta_{1}}\,\lambda_{D}\,
\norm{\grad f}_{\infty}  
\\[5pt]
&\ge\; c_{4}\,r^{-\alpha} \qquad \forall\, x\in B_r(x_{0})\,,
\end{align*}
for some constant $c_{4}>0$ and $r$ small,
where in the third inequality we used the fact that 
$\theta_{1}<\alpha-1$.
Thus by It\^{o}'s formula we obtain
\begin{equation*}
\Exp_{x}\bigl[\tau(B_{r}(x_{0}))\bigr]\;\le\; c_{4}^{-1}
r^{\alpha}\norm{f}_{\infty} \qquad \forall\, x\in B_r(x_{0})\,.
\end{equation*}
This completes the proof.
\end{proof}

\begin{lemma}\label{L4.2}
There exists a constant $\kappa_{3}>0$ such that for any $r\in(0, 1)$, $x\in D$ and
$A\subset B_{r}(x)$ we have
\begin{equation*}
\Prob_{z}\bigl(\tau(A^{c})
\,<\,\tau(B_{3r}(x))\bigr)\;\ge\; \kappa_{3}\,\frac{|A|}{|B_{r}(x)|}
\qquad \forall\, z\in B_{2r}(x)\,.
\end{equation*}
\end{lemma}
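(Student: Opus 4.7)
The plan is to combine the L\'evy system formula with Lemma~\ref{L4.1} by a self-bounding argument. Throughout, write $\tau=\tau(B_{3r}(x))$ and $T_{A}=\inf\{t\ge0\colon X_{t}\in A\}$; the statement is trivial when $z\in A$, so assume $z\in B_{2r}(x)\setminus A$.

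First I would apply Proposition~\ref{levy-system} with the disjoint sets $\R^{d}\setminus A$ and $A$, and use optional stopping of the resulting martingale at $T_{A}\wedge\tau\wedge n$, followed by monotone convergence as $n\to\infty$. Since the only $s\le T_{A}\wedge\tau$ at which the indicator $\bm1_{\{X_{s-}\notin A,\,X_{s}\in A\}}$ can be nonzero is $s=T_{A}$, and only if $T_{A}$ is a jump time, this produces the identity
\begin{equation*}
\Prob_{z}(T_{A}\text{ is a jump time}, T_{A}\le\tau)
\;=\;\Exp_{z}\biggl[\int_{0}^{T_{A}\wedge\tau}\int_{A}
\uppi(X_{s},y-X_{s})\,\D{y}\,\D{s}\biggr]\,.
\end{equation*}
In particular the left-hand side is a lower bound for $\Prob_{z}(T_{A}\le\tau)$, so the possible drift-entry into $A$ does not hurt us. For the right-hand side I use that on $s<T_{A}\wedge\tau$ we have $X_{s}\in B_{3r}(x)$ and thus $|y-X_{s}|\le 4r$ for every $y\in A\subset B_{r}(x)$. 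Combined with the lower bound $k\ge\lambda_{D}^{-1}$ from the definition of $\fL(\lambda)\cup\mathfrak{I}_{\alpha}(\theta,\lambda)$, this yields
\begin{equation*}
\Prob_{z}(T_{A}\le\tau)\;\ge\;
\frac{\lambda_{D}^{-1}\,|A|}{(4r)^{d+\alpha}}\,\Exp_{z}[T_{A}\wedge\tau]\,.
\end{equation*}

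The next step is to lower bound $\Exp_{z}[T_{A}\wedge\tau]$ using the two parts of Lemma~\ref{L4.1}. Writing $\Exp_{z}[T_{A}\wedge\tau]=\Exp_{z}[\tau]-\Exp_{z}[(\tau-T_{A})\bm1_{\{T_{A}<\tau\}}]$ and applying the strong Markov property at $T_{A}$, the second term equals $\Exp_{z}\bigl[\bm1_{\{T_{A}<\tau\}}\Exp_{X_{T_{A}}}[\tau]\bigr]$; since $X_{T_{A}}\in A\subset B_{3r}(x)$, Lemma~\ref{L4.1}(b) applied to $B_{3r}(x)$ bounds this by $\kappa_{2}(3r)^{\alpha}\,\Prob_{z}(T_{A}<\tau)$. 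For the first term I use Lemma~\ref{L4.1}(a) with the ball $B_{r}(z)\subset B_{3r}(x)$ (valid for $z\in B_{2r}(x)$), which gives $\Exp_{z}[\tau]\ge\Exp_{z}[\tau(B_{r}(z))]\ge\kappa_{2}^{-1}r^{\alpha}$. Putting these together,
\begin{equation*}
\Exp_{z}[T_{A}\wedge\tau]\;\ge\;\kappa_{2}^{-1}r^{\alpha}
-\kappa_{2}\,3^{\alpha}r^{\alpha}\,\Prob_{z}(T_{A}\le\tau)\,.
\end{equation*}

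Finally, substituting this into the previous display, setting $p=\Prob_{z}(T_{A}\le\tau)$, and using $|A|\le|B_{r}(x)|=c_{d}r^{d}$, yields a linear inequality of the shape
\begin{equation*}
p\,\Bigl(1+C_{1}\,\tfrac{|A|}{|B_{r}(x)|}\Bigr)
\;\ge\;C_{2}\,\tfrac{|A|}{|B_{r}(x)|}
\end{equation*}
with constants $C_{1},C_{2}$ depending only on $\lambda_{D}$, $\alpha$, $d$, and $\kappa_{2}$. Solving for $p$ gives the desired bound with $\kappa_{3}=C_{2}/(1+C_{1})$, after shrinking $r_{0}$ so that $3r<r_{0}$ is within the range of validity of Lemma~\ref{L4.1}. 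The main conceptual hurdle is the first step: combining the L\'evy system bound (which a priori only controls jump-entries) with the strong Markov identity to close the self-bounding loop; the drift is absorbed harmlessly into the lower-bound inequality because we only need \emph{some} mechanism producing a lower bound on $\Prob_{z}(T_{A}\le\tau)$.
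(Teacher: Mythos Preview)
Your argument is correct and complete in spirit, but it follows a genuinely different route from the paper's proof. Both start the same way: apply the L\'evy system formula to bound $\Prob_{z}(T_{A}<\tau)$ from below by a constant times $r^{-\alpha}\frac{|A|}{|B_{r}(x)|}$ multiplied by the expectation of a stopping time. The difference lies in how that expectation is bounded below.

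The paper does not use the strong Markov decomposition $\Exp_{z}[\tau]-\Exp_{z}[(\tau-T_{A})\bm1_{\{T_{A}<\tau\}}]$ and does not invoke Lemma~\ref{L4.1}(b) at all. Instead, it introduces a deterministic truncation time $t r^{\alpha}$, uses Lemma~\ref{L3.5} directly to ensure $\Prob_{y}(\tau\le t r^{\alpha})\le\nicefrac{1}{4}$, and splits into cases: if $\Prob_{z}(T_{A}<\tau)\ge\nicefrac{1}{4}$ there is nothing to prove, otherwise $\Exp_{z}[T_{A}\wedge\tau\wedge t r^{\alpha}]\ge t r^{\alpha}\bigl(1-\nicefrac{1}{4}-\nicefrac{1}{4}\bigr)=\nicefrac{t}{2}\,r^{\alpha}$, which closes the estimate immediately. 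Your self-bounding argument is cleaner in that it avoids the case split and the auxiliary time parameter, but it costs you the upper exit-time estimate Lemma~\ref{L4.1}(b), which the paper's route does not need. One consequence is the $r_{0}$ issue you flag at the end: Lemma~\ref{L4.1} is stated only for $r<r_{0}$, whereas Lemma~\ref{L4.2} claims all $r\in(0,1)$. This is easily patched (for $r\in[r_{0},1)$ the upper bound in part~(b) follows from Lemma~\ref{L3.1} with a possibly larger constant, and part~(a) already holds for all $r\in(0,1)$ from its proof), but it is worth noting that the paper's direct use of Lemma~\ref{L3.5} sidesteps this entirely.
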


\begin{proof}
Let $\Hat\tau\df\tau(B_{3r}(x))$.
Suppose $\Prob_{z}(\tau(A^{c})<\Hat\tau)<\nicefrac{1}{4}$
for some $z\in B_{2r}(x)$.
Otherwise there is nothing to prove as $\frac{|A|}{|B_{r}(x)|}\le 1$.
By Lemma~\ref{L3.5} and Remark~\ref{R3.2} there exists $t>0$ such that
$\Prob_{y}(\Hat\tau\le t r^{\alpha})\le \nicefrac{1}{4}$
for all $y\in B_{2r}(x)$.
Hence using the L\'{e}vy-system formula we obtain
\begin{align}\label{ee4.2}
\Prob_{y}(\tau(A^{c})<\Hat\tau)&\;\ge\;
\Exp_{y}\Biggl[\sum_{s\le \tau(A^{c})\wedge\Hat\tau\wedge t r^{\alpha}}
\bm1_{\{X_{s-}\neq X_{s}, X_{s}\in A\}}\Biggr]\nonumber\\[5pt]
&\;=\;\Exp_{y}\biggl[\int_{0}^{\tau(A^{c})\wedge\Hat\tau\wedge t r^{\alpha}}
\int_{A}
\frac{k(X_{s}, z-X_{s})}{|z-X_{s}|^{d+\alpha}}\,\D{z}\,\D{s}\biggr]
\nonumber\\[5pt]
& \;\ge\; \Exp_{y}\biggl[\int_{0}^{\tau(A^{c})\wedge\Hat\tau\wedge t r^{\alpha}}
\int_{A}\frac{\lambda^{-1}_{D}}{(4r)^{d+\alpha}}\,\D{z}\,\D{s}\biggr]
%\displaybreak
\nonumber\\[5pt]
&\;\ge\; \kappa'_{3}\, r^{-\alpha}\frac{|A|}{|B_{r}(x)|}
\Exp_{y}[\tau(A^{c})\wedge\Hat\tau\wedge t r^{\alpha}]
\end{align}
for some constant $\kappa'_{3}>0$,
where in the third inequality we use the fact that $|X_{s}-z|\le 4r$ for
$s< \Hat\tau$, $z\in A$.
On the other hand, we have
\begin{align}\label{ee4.3}
\Exp_{y}[\tau(A^{c})\wedge\Hat\tau\wedge t r^{\alpha}]&\;\ge\; t\, r^{\alpha}\,
\Prob_{y}(\tau(A^{c})\ge\Hat\tau\ge t r^{\alpha})
\nonumber\\[5pt]
&\;=\; t\, r^{\alpha}\,\bigl[1-\Prob_{y}(\tau(A^{c})<\Hat\tau)
-\Prob_{y}(\Hat\tau<t r^{\alpha})\bigr]\nonumber\\[5pt]
&\;\ge\; \frac{t}{2}\,r^{\alpha}\,.
\end{align}
Therefore combining \eqref{ee4.2}--\eqref{ee4.3}, we obtain
$\Prob_{z}(\tau(A^{c})<\Hat\tau)\ge
\frac{t\kappa'_{3}}{2}\frac{|A|}{|B_{r}(x)|}$.
\end{proof}

\begin{lemma}\label{L4.3}
There exists positive constants $\kappa_{i}$, $i=4, 5$, such that if $x\in D$,
$r\in (0, 1)$, $z\in B_{r}(x)$, and $H$ is a bounded nonnegative function with
support in $B_{2r}^{c}(x)$, then
\begin{equation*}
\Exp_{z}\bigl[H(X_{\tau(B_{r}(x)})\bigr]\;\le\; \kappa_{4}\,
\Exp_{z}\bigl[\tau(B_{r}(x)\bigr]
\int_{\Rds} H(y)\frac{k(x, y-x)}{|y-x|^{d+\alpha}}\,\D{y}\,,
\end{equation*}
and
\begin{equation*}
\Exp_{z}\bigl[H(X_{\tau(B_{r}(x)})\bigr]\;\ge\; \kappa_{5}\,
\Exp_{z}\bigl[\tau(B_{r}(x)\bigr]
\int_{\Rds} H(y)\frac{k(x, y-x)}{|y-x|^{d+\alpha}}\,\D{y}\,.
\end{equation*} 
\end{lemma}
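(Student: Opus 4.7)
The plan is to apply the L\'evy system formula of Proposition~\ref{levy-system}, extended in the standard way from indicator functions to general nonnegative measurable functions, to $\tau\df\tau(B_{r}(x))$. Write $\tau_{n}\df\tau\wedge n$. Since $H$ vanishes on $B_{2r}(x)$, any contribution to $H(X_{\tau})$ must come from a jump at time $\tau$ from $X_{\tau-}\in B_{r}(x)$ (or its closure) directly into $B_{2r}^{c}(x)$; continuous exits via the drift, and jumps landing in $B_{2r}(x)\setminus B_{r}(x)$, contribute $0$. Thus, modulo a martingale argument together with dominated convergence (which I would justify using Lemma~\ref{L4.1}(b) to control $\Exp_{z}[\tau]$ and the hypothesis that $H$ is bounded), one obtains
\begin{equation*}
\Exp_{z}\bigl[H(X_{\tau})\bigr]
\;=\;\Exp_{z}\biggl[\int_{0}^{\tau}\int_{B_{2r}^{c}(x)}
H(y)\,\frac{k(X_{s},\,y-X_{s})}{\abs{y-X_{s}}^{d+\alpha}}\,\D{y}\,\D{s}\biggr]\,.
\end{equation*}

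The next step is to show that the inner kernel is comparable to $\frac{k(x,\,y-x)}{\abs{y-x}^{d+\alpha}}$ uniformly in $s<\tau$ and $y\in B_{2r}^{c}(x)$. For such $s,y$ we have $\abs{X_{s}-x}<r$ and $\abs{y-x}\ge 2r$, so by the triangle inequality
\begin{equation*}
\tfrac{1}{2}\abs{y-x}\;\le\;\abs{y-X_{s}}\;\le\;\tfrac{3}{2}\abs{y-x}\,,
\end{equation*}
which gives two-sided control of $\abs{y-X_{s}}^{-(d+\alpha)}$ by $\abs{y-x}^{-(d+\alpha)}$ up to a dimensional constant. The pointwise two-sided bound $\lambda_{D}^{-1}\le k(\,\cdot\,,\,\cdot\,)\le\lambda_{D}$ from Definition~\ref{D4.1} then yields
\begin{equation*}
\lambda_{D}^{-2}\,(2/3)^{d+\alpha}\,\frac{k(x,\,y-x)}{\abs{y-x}^{d+\alpha}}
\;\le\;\frac{k(X_{s},\,y-X_{s})}{\abs{y-X_{s}}^{d+\alpha}}
\;\le\;\lambda_{D}^{2}\,2^{d+\alpha}\,\frac{k(x,\,y-x)}{\abs{y-x}^{d+\alpha}}\,.
\end{equation*}

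Substituting these bounds into the L\'evy system identity, and using that $H$ is supported in $B_{2r}^{c}(x)$ to extend the $y$-integral back to all of $\Rd$, factors the integrand into a product: an $s$-integral giving $\Exp_{z}[\tau]$, and a $y$-integral giving $\int_{\Rd}H(y)\frac{k(x,y-x)}{\abs{y-x}^{d+\alpha}}\,\D{y}$. The two inequalities of the lemma then follow with $\kappa_{4}\df c(d)\,\lambda_{D}^{2}$ and $\kappa_{5}\df c'(d)\,\lambda_{D}^{-2}$.

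I do not anticipate a real obstacle. The only mildly delicate point is justifying the extension of Proposition~\ref{levy-system} from the indicator form $\bm1_{A}\bm1_{B}$ to a general bounded nonnegative $H$ supported in $B_{2r}^{c}(x)$, together with passing from $\tau_{n}$ to $\tau$ in the limit; a monotone class argument approximating $H$ by simple functions, combined with the finiteness of $\Exp_{z}[\tau]$ from Lemma~\ref{L4.1}(b), handles both issues. The rest is elementary comparison via the triangle inequality and the uniform bounds on $k$.
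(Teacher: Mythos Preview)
Your proposal is correct and is precisely the standard argument the paper is alluding to when it writes ``The proof follows using the same argument as in \cite[Lemma~3.5]{song-vondracek-04}'': apply the L\'evy system formula (Proposition~\ref{levy-system}) up to $\tau(B_{r}(x))$, then use the triangle inequality $\tfrac{1}{2}\abs{y-x}\le\abs{y-X_{s}}\le\tfrac{3}{2}\abs{y-x}$ together with the two-sided bound $\lambda^{-1}\le k\le\lambda$ to compare kernels. One small point to tidy up: since the hypotheses only say $x\in D$ and $r\in(0,1)$, the trajectory before exit lives in $B_{r}(x)$, which need not sit inside $D$; just replace $\lambda_{D}$ by $\lambda_{D_{1}}$ for the $1$-neighborhood $D_{1}$ of $D$, exactly as the paper does in the proof of Lemma~\ref{L4.1}.
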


The proof follows using the same argument as in
\cite[Lemma~3.5]{song-vondracek-04}.

\medskip

\begin{proof}[Proof of Theorem~\ref{T4.1}]
By Lemmas~\ref{L4.1}, \ref{L4.2}
and \ref{L4.3}, the hypotheses (A1)--(A3) in
\cite{song-vondracek-04} are satisfied.
Hence the proof follows from \cite[Theorem~2.4]{song-vondracek-04}.
\end{proof}

\section{Positive recurrence and invariant probability measures}
\label{sec-stability}
In this section we study the recurrence properties for a Markov
process with generator $\I\in\fL$ (see Definition~\ref{D3.2} and ~\ref{D4.1}).
Many of the results of this section
are based on the assumption of the existence of a
\emph{Lyapunov function}.

\begin{definition}
We say that the operator $\I$ of the form
\eqref{gen-op} satisfies the Lyapunov stability
condition if there exists a $\calV\in C^{2}(\Rd)$ such that
$\inf_{x\in\Rd}\calV(x)>-\infty$, and for some compact
set $\cK\subset\Rd$ and $\varepsilon>0$, we have
\begin{equation}\label{E5.01}
\I\,\calV(x)\;\le\; -\varepsilon \qquad\forall\, x\in \cK^{c}.
\end{equation}
\end{definition}

It is straightforward to verify that if $\calV$ satisfies \eqref{E5.01} for
$\I\in\fL$, then
\begin{equation}\label{E5.02}
\int_{\abs{z}\ge 1}|\calV(z)|\frac{1}{\abs{z}^{d+\alpha}}\,\D{z}<\infty\,.
\end{equation}

\begin{proposition}%\label{P5.1}
If there exists a constant $\gamma\in (1,\alpha)$ such that
\begin{equation*}
\frac{b(x)\cdot x}{\abs{x}^{2-\gamma}\,\sup_{z\in\Rd} k(x,z)\vee 1}\;
\xrightarrow[\abs{x}\to\infty]{}\;-\infty\,,
\end{equation*}
then the operator $\I$ satisfies
the Lyapunov stability condition.
\end{proposition}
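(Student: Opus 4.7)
The plan is to verify \eqref{E5.01} with the Lyapunov candidate $\calV(x) = (1+|x|^2)^{\gamma/2}$, which lies in $C^{2}(\Rd)$, is bounded below by $1$, grows like $|x|^{\gamma}$ at infinity, and automatically meets \eqref{E5.02} because $\gamma < \alpha$. Write $\Lambda(x)\df \sup_{z\in\Rd} k(x,z)\vee 1$, so that the hypothesis becomes $b(x)\cdot x \le -A(x)|x|^{2-\gamma}\Lambda(x)$ with $A(x)\to \infty$ as $|x|\to\infty$. The drift contribution then satisfies
\begin{equation*}
b(x)\cdot\grad\calV(x) \;=\; \gamma(1+|x|^{2})^{\gamma/2-1}\,b(x)\cdot x \;\le\; -\gamma A(x)\Lambda(x)\bigl(1+o(1)\bigr),
\end{equation*}
providing the decisive negative term.

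For the jump integral $\int_{\Rds}\dd\calV(x;z)\,k(x,z)|z|^{-d-\alpha}\D z$, I would follow the three-region decomposition already employed in the proof of Lemma~\ref{L3.3}, with the exponent $\gamma$ in place of $\delta$. On $\{|z|\le 1\}$ a second-order Taylor expansion gives $|\dd\calV(x;z)|\le C|z|^{2}(1+|x|)^{\gamma-2}$; on $\{1<|z|\le|x|/2\}$ writing $\calV(x+z)-\calV(x)=\grad\calV(x)\cdot z + R(x,z)$ produces a linear part plus a remainder $|R(x,z)|\le C|z|^{2}|x|^{\gamma-2}$; and on $\{|z|>|x|/2\}$ the inequalities $|x+z|^{\gamma}\le (3|z|)^{\gamma}$ and $|x|^{\gamma}\le(2|z|)^{\gamma}$ (as in \eqref{ee3.7}) give $|\dd\calV(x;z)|\le C|z|^{\gamma}$. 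Integrating the quadratic remainders together with the large-jump piece against $\Lambda(x)|z|^{-d-\alpha}$ yields a total contribution of order $\Lambda(x)|x|^{\gamma-\alpha}$, which vanishes at infinity since $\gamma<\alpha$.

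The only remaining piece is the linear medium-range term $\grad\calV(x)\cdot c_{2}(x)$, with
\begin{equation*}
c_{2}(x)\df\int_{1<|z|\le|x|/2} z\,k(x,z)|z|^{-d-\alpha}\D z,
\end{equation*}
and $|c_{2}(x)|\le C\Lambda(x)$ since $\int_{|z|>1}|z|^{1-d-\alpha}\D z<\infty$ (this uses $\alpha>1$). Putting everything together,
\begin{equation*}
\I\calV(x) \;\le\; \gamma(1+|x|^{2})^{\gamma/2-1}\bigl(b(x) + c_{2}(x)\bigr)\cdot x + O\bigl(\Lambda(x)|x|^{\gamma-\alpha}\bigr),
\end{equation*}
and the hypothesis together with $|c_{2}(x)\cdot x|\le C\Lambda(x)|x|$ and the $|x|^{2-\gamma}$ normalization in the denominator forces the bracket to tend to $-\infty$, yielding \eqref{E5.01} on the complement of a sufficiently large ball. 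The hardest step is the control of $c_{2}(x)$ in the non-symmetric case: for $\I\in\fLsym$ the integrand defining $c_{2}$ is odd in $z$ so $c_{2}\equiv 0$ and the argument is immediate, while in general one must exploit the strong divergence built into the hypothesis to absorb the $O(\Lambda(x)|x|)$ perturbation produced by $c_{2}$.
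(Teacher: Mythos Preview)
Your approach mirrors the paper's: both take a Lyapunov candidate growing like $|x|^{\gamma}$ and estimate the jump integral by a near/medium/far decomposition. You are in fact more careful than the paper in that you explicitly isolate the first-order contribution $c_{2}(x)=\int_{1<|z|\le|x|/2}z\,k(x,z)|z|^{-d-\alpha}\,\D{z}$; the paper simply asserts in \eqref{E5.03}--\eqref{E5.05} that the full jump integral is $O\bigl(\Bar{k}(x)\,|x|^{\gamma-\alpha}\bigr)$, which is precisely the bound one obtains \emph{after} $c_{2}$ has been removed. For $\I\in\fLsym$ one has $c_{2}\equiv 0$, and in that case your argument (and the paper's) is complete and correct.

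In the non-symmetric case, however, your final claim---that the ``strong divergence'' in the hypothesis absorbs the $O(\Lambda(x)|x|)$ perturbation from $c_{2}$---fails. The hypothesis says $b(x)\cdot x$ is much more negative than $|x|^{2-\gamma}\Lambda(x)$, but $|c_{2}(x)\cdot x|$ can be as large as $C\Lambda(x)|x|$, and
\[
\frac{\Lambda(x)\,|x|}{|x|^{2-\gamma}\,\Lambda(x)}\;=\;|x|^{\gamma-1}\;\xrightarrow[|x|\to\infty]{}\;\infty
\]
since $\gamma>1$; so nothing forces $(b(x)+c_{2}(x))\cdot x$ to be negative. Concretely, with $k(x,z)=1+\tfrac{z_{1}}{2(1+|z|)}$ (bounded, non-symmetric, $\Lambda$ constant) and $b(x)=-\epsilon_{0}\log(2+|x|)\,|x|^{-\gamma}x$ for small $\epsilon_{0}>0$, the hypothesis is satisfied, yet along $x=Re_{1}$ one computes $\gamma|x|^{\gamma-2}(b(x)+c_{2}(x))\cdot x\sim c\,R^{\gamma-1}-\gamma\epsilon_{0}\log R\to+\infty$; here the asymmetry of $k$ produces a fixed outward effective drift that the vanishing $b$ cannot overcome, and no Lyapunov function exists at all. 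This is the same gap hidden in the paper's argument: carrying out the steps of Lemma~\ref{L3.3} with an exponent $\gamma>1$ in place of $\delta<1$ yields a leading term $|x|^{\gamma-1}$, not the $|x|^{\gamma-\alpha}$ asserted in \eqref{E5.03}, unless the first-order term cancels by symmetry.
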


\begin{proof}
Consider a nonnegative function $f\in C^{2}(\Rd)$ such that
$f(x)=\abs{x}^\gamma$ for $\abs{x}\ge 1$,
and let $\Bar{k}(x) \df \sup_{z\in\Rd} k(x,z)$.
Since the second derivatives of $f$ are bounded in $\Rd$, and
$k$ is also bounded, it follows that
\begin{equation*}
\babss{\int_{\abs{z}\le1}\dd f(x;z)\,\frac{k(x,z)}{\abs{z}^{d+\alpha}}\,\D{z}}
\;\le\; \kappa_{1}\,\Bar{k}(x)
\end{equation*}
for some constant $\kappa_{1}$ which depends on the bound of the trace
of the Hessian of $f$.
Following the same steps as in the proof of \eqref{ee3.6},
and using the fact that
$k$ is bounded in $\Rd\times\Rd$, we obtain
\begin{equation}\label{E5.03}
\babss{\int_{\abs{z}>1} (\abs{x+z}^\gamma -\abs{x}^\gamma)\,
\frac{k(x,z)}{\abs{z}^{d+\alpha}}\,\D{z}}
\;\le\; \kappa_{2}\,\Bar{k}(x)\,
(1+\abs{x}^{\gamma-\alpha})\qquad \text{if~} \abs{x}>1\,,
\end{equation}
for some constant $\kappa_{2}>0$.
Since also,
\begin{equation}\label{E5.04}
\babss{\int_{\Rds}\bm1_{B_{1}}(x+z)\,
\frac{k(x,z)}{\abs{z}^{d+\alpha}}\,\D{z}}
\;\le\; \kappa_{3}\,\Bar{k}(x)\,(\abs{x}-1)^{-\alpha}\qquad\text{for~} \abs{x}>2\,,
\end{equation}
for some constant $\kappa_{3}$,
it follows by the above that
\begin{equation}\label{E5.05}
\babss{\int_{\Rds}\dd f(x;z)\,\frac{k(x,z)}{\abs{z}^{d+\alpha}}\,\D{z}}
\;\le\; \kappa_{4}\,\Bar{k}(x)\,(1+\abs{x}^{\gamma-\alpha})\qquad\forall x\in\Rd\,,
\end{equation}
for some constant $\kappa_{4}$.
Therefore by the hypothesis and \eqref{E5.05}, it follows that
$\I f(x)\to-\infty$ as $\abs{x}\to\infty$.
\end{proof}

\begin{lemma}\label{L5.1}
Let $X$ be the Markov process associated with a generator
$\I\in\fL(\lambda)$, and suppose that $\I$ satisfies the Lyapunov stability
hypothesis \eqref{E5.01} and the growth condition in \eqref{E3.5}.
Then for any $x\in \cK^{c}$ we have
\begin{equation*}
\Exp_{x}[\tau(\cK^{c})]
\;\le\;\frac{2}{\varepsilon}\,\bigl(\calV(x)+(\inf\calV)^-\bigr)\,.
\end{equation*}
\end{lemma}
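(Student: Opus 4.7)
The plan is to apply Dynkin's formula to the Lyapunov function $\calV$, after reducing to a nonnegative function and truncating so that the resulting approximation lies in $C^{2}_{b}(\Rd)$, where Dynkin's identity is immediate from the martingale problem. Replacing $\calV$ by $\widetilde\calV\df\calV-\inf\calV\ge 0$ changes neither $\I\calV$ (since $\I$ annihilates constants) nor the conclusion, because $\widetilde\calV(x)\le\calV(x)+(\inf\calV)^-$. Setting $\tau\df\tau(\cK^{c})$ and $\tau_{n}\df\tau\wedge\tau(B_{n})$ for $n$ large enough that $\cK\subset B_{n}$, the growth condition \eqref{E3.5} together with Lemma~\ref{L3.1} yields $\Exp_{x}[\tau(B_{n})]<\infty$, hence $\Exp_{x}[\tau_{n}]<\infty$.

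Next, pick $\phi_{n}\in C_{c}^{\infty}(\Rd)$ with $0\le\phi_{n}\le 1$ and $\phi_{n}\equiv 1$ on $B_{2n}$, and set $f_{n}\df\widetilde\calV\,\phi_{n}\in C^{2}_{b}(\Rd)$. The martingale problem gives Dynkin's identity
\[
\Exp_{x}\bigl[f_{n}(X_{\tau_{n}\wedge t})\bigr]\;=\;f_{n}(x)
+\Exp_{x}\biggl[\int_{0}^{\tau_{n}\wedge t}\I f_{n}(X_{s})\,\D{s}\biggr]\,.
\]
For every $x\in B_{n}$, $\phi_{n}\equiv 1$ on a neighborhood of $x$ that contains $B_{1}(x)$, so $f_{n}(x)=\widetilde\calV(x)$, $\grad f_{n}(x)=\grad\widetilde\calV(x)$, and $\dd f_{n}(x;z)=\dd\widetilde\calV(x;z)$ for $\abs{z}\le 1$. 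For $\abs{z}>1$, the difference is $\dd f_{n}(x;z)-\dd\widetilde\calV(x;z)=(\phi_{n}(x+z)-1)\widetilde\calV(x+z)\le 0$ by the nonnegativity of $\widetilde\calV$ and $\phi_{n}\le 1$. Consequently $\I f_{n}(x)\le\I\widetilde\calV(x)\le-\varepsilon$ for all $x\in B_{n}\cap\cK^{c}$. Since $X_{s}\in B_{n}\cap\cK^{c}$ for $s<\tau_{n}$, combining this with $f_{n}\ge 0$ gives
\[
0\;\le\;\Exp_{x}\bigl[f_{n}(X_{\tau_{n}\wedge t})\bigr]
\;\le\;\widetilde\calV(x)-\varepsilon\,\Exp_{x}[\tau_{n}\wedge t]\,.
\]

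Letting $t\to\infty$ by monotone convergence gives $\varepsilon\,\Exp_{x}[\tau_{n}]\le\widetilde\calV(x)$. The non-explosion assertion in Lemma~\ref{L3.3} ensures $\tau(B_{n})\nearrow\infty$ almost surely, so $\tau_{n}\nearrow\tau$, and a second application of monotone convergence yields $\varepsilon\,\Exp_{x}[\tau]\le\widetilde\calV(x)\le\calV(x)+(\inf\calV)^-$, which implies (with room to spare) the stated bound.

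The principal obstacle is that $\calV$ is unbounded, so Dynkin's formula cannot be applied to it directly, yet the nonlocal part of $\I$ sees $\calV$ at far-away points, so a multiplicative cutoff $\widetilde\calV\,\phi_{n}$ genuinely perturbs $\I\calV$. The key observation that rescues the argument is the pointwise inequality $f_{n}\le\widetilde\calV$, a consequence of $\widetilde\calV\ge 0$ and $\phi_{n}\in[0,1]$, which forces the cutoff to only decrease $\I$ and thereby inherits the Lyapunov inequality without loss.
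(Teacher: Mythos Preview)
Your proof is correct and follows the same overall strategy as the paper: multiply $\calV$ by a smooth cutoff to land in $C^{2}_{b}(\Rd)$, apply Dynkin's formula on $B_{n}\setminus\cK$, and then let $n\to\infty$ using the non-explosion result of Lemma~\ref{L3.3}.

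The one genuine difference is how you handle the perturbation of $\I\calV$ caused by the cutoff. The paper keeps $\calV$ as is and shows, using the tail integrability \eqref{E5.02}, that for $R_{1}$ large enough the truncation changes $\I\calV$ by at most $\varepsilon/2$ on $B_{R_{0}}\setminus\cK$; this is why the factor $2/\varepsilon$ appears. You instead shift to $\widetilde\calV=\calV-\inf\calV\ge 0$ first, so that $f_{n}=\phi_{n}\widetilde\calV\le\widetilde\calV$ pointwise, and then observe that the nonnegativity of the kernel gives $\I f_{n}\le\I\widetilde\calV$ on $B_{n}$ with no error term at all. This is a cleaner device: it avoids any quantitative use of \eqref{E5.02} and yields the sharper constant $1/\varepsilon$, as you note. (Your appeal to Lemma~\ref{L3.1} for $\Exp_{x}[\tau(B_{n})]<\infty$ is harmless but unnecessary, since $\tau_{n}\wedge t\le t$ already suffices for optional stopping.)
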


\begin{proof}
Let $R_{0}>0$ be such that $\cK\subset B_{R_{0}}$.
We choose a cut-off function $\chi$ which equals $1$ on $B_{R_{1}}$, with
$R_{1}>2R_{0}$, vanishes outside of $B_{R_{1}+1}$, and $\norm{\chi}_{\infty}=1$.
Then $f\df\chi\calV$ is in $C^{2}_{b}(\Rd)$.
Clearly if $\abs{x}\;\le\; R_{0}$ and $\abs{x+z}\;\ge\; R_{1}$, then
$\abs{z}\;>\; R_{0}$, and thus $\abs{x+z}\;\le\; 2\abs{z}$.
Therefore, for large enough $R_{1}$, we obtain
\begin{align*}
\babss{\int_{\Rds}\bigl(f(x+z)-\calV(x+z)\bigr)
\frac{k(x,z)}{\abs{z}^{d+\alpha}}\,\D{z}}
&\;\le\; 2\int_{\{\abs{x+z}\ge R_{1}\}}|\calV(x+z)|\,
\frac{k(x,z)}{\abs{z}^{d+\alpha}}\,\D{z}
\\[5pt]
&\;\le\; 2^{d+\alpha+1}\lambda_{B_{R_{0}}}
\int_{\{\abs{x+z}\ge R_{1}\}}|\calV(x+z)|\,\frac{1}{|x+z|^{d+\alpha}}\,\D{z}
\\[5pt]
&\;\le\; \frac{\varepsilon}{2}\qquad \forall\, x\in B_{R_{0}}\,.
\end{align*}
Hence, for all $R_{1}$ large enough, we have
\begin{equation*}
\I f(x)\;\le\;-\frac{\varepsilon}{2}
\qquad \forall x\in B_{R_{0}}\setminus \cK\,.
\end{equation*}
Let $\widetilde\tau_{R}=\tau(\cK^{c})\wedge\tau(B_{R})$.
Then applying It\^{o}'s formula we obtain
\begin{equation*}
\Exp_{x}\bigl[\calV(X_{\widetilde\tau_{R_{0}}})\bigr]-\calV(x)\;\le\;
-\frac{\varepsilon}{2}\,\Exp_{x}[\widetilde\tau_{R_{0}}]\qquad\forall\,
x\in B_{R_{0}}\setminus \cK\,,
\end{equation*}
implying that
\begin{equation}\label{E5.06}
\Exp_{x}[\widetilde\tau_{R_{0}}]\;\le\;
\frac{2}{\varepsilon}\,\bigl(\calV(x)+(\inf\calV)^-\bigr)\,.
\end{equation}
By the growth condition and Lemma~\ref{L3.3},
$\tau(B_{R})\to\infty$ as $R\to\infty$ with probability $1$.
Hence the result follows by applying Fatou's lemma to \eqref{E5.06}.
\end{proof}

\subsection{Existence of invariant probability measures}

Recall that a Markov process is said be to positive recurrent
if for any compact set $G$ with positive Lebesgue measure it holds that
$\Exp_{x}[\tau(G^{c})]<\infty$ for any $x\in\Rd$.
We have the following theorem. 

\begin{theorem}\label{T5.1}
If $\I\in\fL(\lambda)$ satisfies the Lyapunov stability hypothesis,
and the growth condition in \eqref{E3.5}, then the associated Markov
process is positive recurrent.
\end{theorem}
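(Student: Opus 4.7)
The plan is to combine the Lyapunov estimate of Lemma~\ref{L5.1} with an irreducibility estimate coming from the L\'evy system formula in Proposition~\ref{levy-system} and the uniform lower bound on $k$ built into the class $\fL(\lambda)$. Fix a compact set $G\subset\Rd$ with $\abs{G}>0$; the goal is to show that $\Exp_{x}[\tau(G^{c})]<\infty$ for every $x\in\Rd$. Lemma~\ref{L5.1} already gives $\Exp_{x}[\tau(\cK^{c})]\le \tfrac{2}{\varepsilon}\bigl(\calV(x)+(\inf\calV)^{-}\bigr)$ for $x\in\cK^{c}$, which is finite and locally bounded, so the process reaches $\cK$ in finite expected time from any starting point.

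The key step is to produce constants $T_{0}>0$ and $p_{0}>0$ such that $\inf_{x\in\cK}\Prob_{x}(\tau(G^{c})\le T_{0})\ge p_{0}$. I would choose $R_{0}$ with $\cK\cup G\subset B_{R_{0}}$, and use Lemma~\ref{L3.5} together with Remark~\ref{R3.2} to pick $R_{1}>R_{0}$ and $T_{0}$ so that $\sup_{x\in B_{R_{0}}}\Prob_{x}(\tau(B_{R_{1}})\le T_{0})\le \tfrac12$. The lower bound $k(x,z)\ge\lambda_{B_{R_{1}}}^{-1}$ on $B_{R_{1}}\times\Rd$ then yields
\begin{equation*}
\inf_{x\in B_{R_{1}}\setminus G}\int_{G}\frac{k(x,y-x)}{\abs{y-x}^{d+\alpha}}\,\D{y}
\;\ge\; c_{1}\;:=\;\frac{\lambda_{B_{R_{1}}}^{-1}\abs{G}}{(2R_{1})^{d+\alpha}}\;>\;0\,.
\end{equation*}
Applying Proposition~\ref{levy-system} with $A=B_{R_{1}}\setminus G$ and $B=G$ up to the (bounded) stopping time $\tau':=T_{0}\wedge\tau(B_{R_{1}})\wedge\tau(G^{c})$, and observing that at most one such $A\to B$ jump can occur before $\tau(G^{c})$, I get $\Prob_{x}(\tau(G^{c})\le T_{0})\ge c_{1}\Exp_{x}[\tau']$. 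Bounding $\Exp_{x}[\tau']\ge T_{0}\bigl(\tfrac12-\Prob_{x}(\tau(G^{c})\le T_{0})\bigr)$ and solving the resulting inequality in $q:=\Prob_{x}(\tau(G^{c})\le T_{0})$ gives $p_{0}\ge \tfrac{c_{1}T_{0}}{2(1+c_{1}T_{0})}>0$, uniformly in $x\in\cK$.

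The remainder is a standard iteration. Set $\eta_{0}:=\inf\{t\ge 0\colon X_{t}\in\cK\}$ and recursively $\eta_{n}:=\inf\{t\ge\eta_{n-1}+T_{0}\colon X_{t}\in\cK\}$. The strong Markov property at $\eta_{n-1}$ combined with the previous step gives $\Prob_{x}(\tau(G^{c})>\eta_{n-1}+T_{0})\le(1-p_{0})^{n}$. The length of each cycle is controlled via Lemma~\ref{L5.1} applied at $X_{\eta_{n-1}+T_{0}}$, yielding $\Exp_{x}[\eta_{n}-\eta_{n-1}]\le T_{0}+\tfrac{2}{\varepsilon}\Exp_{x}\bigl[\calV(X_{\eta_{n-1}+T_{0}})+(\inf\calV)^{-}\bigr]$. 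Since $\calV\in C^{2}$ and the kernel satisfies the tail integrability \eqref{E5.02}, $\I\calV$ is bounded above on the compact set $\cK$; a truncation of $\calV$ as in the proof of Lemma~\ref{L5.1} together with Dynkin's formula then gives a uniform bound $M$ on $\Exp_{x}[\calV(X_{\eta_{n-1}+T_{0}})]$. Setting $\bar T:=T_{0}+\tfrac{2}{\varepsilon}(M+(\inf\calV)^{-})$ and summing geometrically, $\Exp_{x}[\tau(G^{c})]\le \Exp_{x}[\eta_{0}]+\sum_{n\ge 0}(1-p_{0})^{n}\,\bar T<\infty$.

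The main obstacle is the middle step: obtaining a $p_{0}$ that does not depend on the starting point $x\in\cK$ requires coupling the kernel lower bound carefully with the tightness estimate of Lemma~\ref{L3.5}, so as to prevent the process from escaping $B_{R_{1}}$ before the desired jump into $G$ occurs. The first and last steps are essentially bookkeeping with the Lyapunov bound and the strong Markov property.
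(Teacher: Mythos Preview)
Your proof is correct and follows the same overall strategy as the paper: a Lyapunov estimate (Lemma~\ref{L5.1}) for finite expected return to $\cK$, a L\'evy-system lower bound on the probability of jumping into $G$ during each cycle, and a geometric-series summation via the strong Markov property. The only notable difference is tactical---you run fixed-time cycles $[\eta_{n-1},\eta_{n-1}+T_{0}]$ and control $\sup_{y\in\cK}\Exp_{y}[\calV(X_{T_{0}})]$ by Dynkin's formula with a truncation of $\calV$, whereas the paper runs exit-from-$D$/return-to-$\cK$ cycles and controls $\sup_{x\in\cK}\Exp_{x}[\calV(X_{\Hat\tau_{1}})]$ by showing, again via the L\'evy-system formula, that the exit distribution from $D$ is dominated by $\abs{z}^{-d-\alpha}\D{z}$ and then invoking \eqref{E5.02}; either device is standard and both close the argument.
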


\begin{proof}
First we note that if the Lyapunov condition is satisfied for some compact set
$\cK$,
then it is also satisfied for any compact set containing $\cK$.
Hence we may assume that $\cK$
is a closed ball centered at origin.
Let $D$ be an open ball with center at origin and containing $\cK$.
We define
\begin{equation*}
\Hat\tau_{1}\;\df\;\inf\;\{t\ge 0\,:\, X_{t}\notin D\}\,,\qquad
\Hat\tau_{2}\;\df\;\inf\;\{t>\tau\,:\, X_{t}\in \cK\}\,.
\end{equation*}
Therefore for $X_{0}=x\in \cK$, $\Hat\tau_{2}$ denotes the first return time to
$\cK$ after hitting $D^{c}$.
First we prove that
\begin{equation}\label{E5.07}
\sup_{x\in \cK}\;\Exp_{x}[\Hat\tau_{2}]\;<\;\infty\,.
\end{equation}
By Lemma~\ref{L5.1} we have
$\Exp_{x}[\tau(\cK^{c})]\le\frac{2}{\varepsilon}[\calV(x)+(\inf\calV)^-]$
for $x\in \cK^{c}$.
By Lemma~\ref{L3.1} we have
$\sup_{x\in \cK}\Exp_{x}[\Hat\tau_{1}]<\infty$. 
Let $\scrP_{\Hat\tau_{1}}(x,\cdot\,)$ denote
the exit distribution of the process $X$ starting from $x\in \cK$.
In order to prove \eqref{E5.07} it suffices to show that
\begin{equation*}
\sup_{x\in \cK}\;\int_{D^{c}}
\bigl(\calV(y)+(\inf\calV)^-\bigr)\,\scrP_{\Hat\tau_{1}}(x,\D{y})\;<\;\infty\,,
\end{equation*}
and since $\calV$ is locally bounded it is enough that
\begin{equation}\label{E5.08}
\sup_{x\in \cK}\;\int_{B_{R}^{c}}\bigl(\calV(y)+(\inf\calV)^-\bigr)\,
\scrP_{\Hat\tau_{1}}(x,\D{y})\;<\;\infty
\end{equation}
for some ball $B_{R}$.
To accomplish this we choose $R$ large enough so that
\begin{equation*}
\frac{|x-z|}{\abs{z}}\;>\;\frac{1}{2}\qquad\text{for}~ \abs{z}\ge R\,,\; x\in D\,.
\end{equation*}
Then, for any Borel set $A\subset B_{R}^{c}$, by
Proposition~\ref{levy-system} we have that
\begin{align*}
\Prob_{x}(X_{\Hat\tau_{1}\wedge t}\in A)
&\;=\;\Exp_{x}\Biggl[\sum_{s\le \Hat\tau_{1}\wedge t}\bm1_{\{X_{s-}\in
D,\, X_{s}\in A\}}\Biggr]
\\[5pt]
&\;=\;\Exp_{x}\biggl[\int_{0}^{\Hat\tau_{1}\wedge t}\bm1_{\{X_{s}\in D\}}
\int_{A}\frac{k(X_{s}, z-X_{s})}{|X_{s}-z|^{d+\alpha}}\,\D{z}\,\D{s}\biggr]
\\[5pt]
&\;\le\; 2^{d+\alpha}\lambda_{D}\,
\Exp_{x}\biggl[\int_{0}^{\Hat\tau_{1}\wedge t}\int_{A}
\frac{1}{\abs{z}^{d+\alpha}}\,\D{z}\,\D{s}\biggr]
\\[5pt]
&\;=\; 2^{d+\alpha}\lambda_{D}\,\Exp_{x}[\Hat\tau_{1}\wedge t]\,\mu(A)\,,
\end{align*} 
where $\mu$ is the $\sigma$-finite measure on
$\Rd_*$ with density $\frac{1}{\abs{z}^{d+\alpha}}$.
Thus letting $t\to\infty$ we obtain
\begin{equation*}
\scrP_{\Hat\tau_{1}}(x, A)\;\le\;
2^{d+\alpha}\lambda_{D}\,\biggl(\sup_{x\in \cK}\;\Exp_{x}[\Hat\tau_{1}]\biggr)
\,\mu(A)\,.
\end{equation*}
Therefore, using a standard approximation argument, we deduce that for any
nonnegative function $g$ it holds that
\begin{equation*}
\int_{B_{R}^{c}}g(y)\,\scrP_{\Hat\tau_{1}}(x, \D{y})\;\le\;
\Tilde\kappa \int_{B_{R}^{c}}g(y)\mu(\D{y})
\end{equation*}
for some constant $\Tilde\kappa$.
This proves \eqref{E5.08} since $\calV$
is integrable on $B_{R}^{c}$ with respect to $\mu$ and $\mu(B_{R}^{c})<\infty$. 

Next we prove that the Markov process is positive recurrent.
We need to show that for any compact set $G$ with positive Lebesgue measure,
$\Exp_{x}[\tau(G^{c})]<\infty$ for any $x\in\Rd$.
Given a compact $G$ and $x\in G^{c}$ we choose a closed ball
$\cK$, which satisfies the Lyapunov condition relative to $\calV$,
and such that $G\cup\{x\}\subset \cK$.
Let $D$ be an open ball containing $\cK$.
We define a sequence of stopping times $\{\Hat\tau_{k}\,,\; k=0,1,\dotsc\}$
as follows:
\begin{align*}
\Hat\tau_{0} &\;=\;0\\[3pt]
\Hat\tau_{2n+1}&\;=\;\inf\{t>\Hat\tau_{2n}:\,X_{t}\notin D\}\,,\\[3pt]
\Hat\tau_{2n+2}&\;=\;\inf\{t>\Hat\tau_{2n+1}:\,X_{t}\in \cK\}\,,
\quad n=0,1,\dotsc.
\end{align*}
Using the strong Markov property and \eqref{E5.08},
we obtain $\Exp_{x}[\Hat\tau_{n}]<\infty$ for all $n\in\NN$.
From Lemma~\ref{L3.5} there exist positive constants $t$ and $r$ such that 
\begin{equation*}
\sup_{x\in\cK}\;\Prob_{x}(\tau(D)<t)\;\le\;
\sup_{x\in\cK}\;\Prob_{x} (\tau(B_r(x))< t)\;\le\; \frac{1}{4}\,.
\end{equation*}
Therefore, using a similar argument as in Lemma~\ref{L4.2}, we can find a
constant $\delta>0$
such that
\begin{equation*}
\inf_{x\in \cK}\;\Prob_{x}(\tau(G^{c})<\tau(D))\;>\;\delta\,.
\end{equation*}
Hence
\begin{equation*}
p\;\df\;\sup_{x\in \cK}\;\Prob_{x}(\tau(D)<\tau(G^{c}))
\;\le\; 1-\delta\;<\;1\,.
\end{equation*}
Thus by the strong Markov property we obtain
\begin{equation*}
\Prob_{x}(\tau(G^{c})>\Hat\tau_{2n})\;\le\;
p\,\Prob_{x}(\tau(G^{c})>\Hat\tau_{2n-2})\;\le\;\dotsb\;\le\; p^{n}
\qquad \forall\,x\in \cK\,.
\end{equation*}
This implies $\Prob_{x}(\tau(G^{c})<\infty)=1$.
Hence, for $x\in \cK$, we obtain
\begin{align*}
\Exp_{x}[\tau(G^{c})]&\;\le\; \sum_{n=1}^{\infty}
\Exp_{x}\bigl[\Hat\tau_{2n}
\bm1_{\{\Hat\tau_{2n-2}<\tau(G^{c})\le\Hat\tau_{2n}\}}\bigr]\\[5pt]
&\;=\; \sum_{n=1}^{\infty}\sum_{l=1}^{n}
\Exp_{x}\bigl[(\Hat\tau_{2l}- \Hat\tau_{2l-2})
\bm1_{\{\Hat\tau_{2n-2}<\tau(G^{c})\le\Hat\tau_{2n}\}}\bigr]\\[5pt]
&\;=\; \sum_{l=1}^{\infty}\sum_{n=l}^{\infty}
\Exp_{x}\bigl[(\Hat\tau_{2l}- \Hat\tau_{2l-2})
\bm1_{\{\Hat\tau_{2n-2}<\tau(G^{c})\le\Hat\tau_{2n}\}}\bigr]
%\displaybreak
\\[5pt]
&\;=\; \sum_{l=1}^{\infty}\Exp_{x}\bigl[(\Hat\tau_{2l}- \Hat\tau_{2l-2})
\bm1_{\{\Hat\tau_{2l-2}<\tau(G^{c})\}}\bigr]\\[5pt]
&\;\le\; \sum_{l=1}^{\infty} p^{l-1}\sup_{x\in \cK}\;\Exp_{x}[\Hat\tau_{2}]\\[5pt]
&\;=\;\frac{1}{1-p}\,\sup_{x\in \cK}\;\Exp_{x}[\Hat\tau_{2}]\;<\;\infty\,.
\end{align*}
Since also $\Exp_{x}[\tau(\cK^{c})]<\infty$ for all $x\in\Rd$,
this completes the proof.
\end{proof}

\begin{theorem}\label{T5.2}
Let $X$ be a Markov process associated with a generator
$\I\in\fLsym(\lambda)\cup\mathfrak{I}_{\alpha}(\theta,\lambda)$,
and suppose that
the Lyapunov stability hypothesis \eqref{E5.01}
 and the growth condition in \eqref{E3.5} hold.
Then $X$ has an invariant probability measure.
\end{theorem}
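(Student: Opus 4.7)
The plan is to build the invariant measure by Krylov--Bogolyubov averaging, using Theorem~\ref{T5.1} (positive recurrence) as the source of tightness and Theorem~\ref{T4.1} (the Harnack inequality) to verify invariance of the weak limit. The class hypothesis $\I\in\fLsym(\lambda)\cup\mathfrak{I}_\alpha(\theta,\lambda)$ enters the argument only through this Harnack input.

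Fix any $x_0\in\cK$ and, for $T>0$, define the time-averaged measure $\mu_T\in\calP(\Rd)$ by
\begin{equation*}
\mu_T(A)\;\df\;\frac{1}{T}\int_0^T\Prob_{x_0}(X_s\in A)\,\D{s},\qquad A\in\calB(\Rd).
\end{equation*}
The first step is to show that $\{\mu_T\}_{T>0}$ is tight. I would truncate the Lyapunov function $\calV$ exactly as in the proof of Lemma~\ref{L5.1} by taking $f_R\df\chi_R\calV\in C^2_b(\Rd)$ with $\chi_R$ a smooth cut-off supported on $B_{R+1}$; the integrability condition \eqref{E5.02} controls the tail terms and gives $\I f_R\le-\varepsilon/2$ on $\cK^c\cap B_{R_0}$ for $R\ge R_0$ sufficiently large. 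Dynkin's formula applied up to $\tau(B_n)\wedge T$, followed by $n\to\infty$ (Lemma~\ref{L3.3} ruling out explosion under the growth bound \eqref{E3.5}), yields after division by $T$ an estimate of the form $\mu_T(\cK^c\cap B_{R_0})\le C/T$. Iterating over increasing $R_0$ and using the bounded-below property of $\calV$ gives $\sup_{T\ge 1}\mu_T(B_R^c)\to 0$ as $R\to\infty$. Prokhorov then produces a subsequence $\mu_{T_k}\Rightarrow\mu\in\calP(\Rd)$.

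The second step, and the main obstacle, is to verify that $\mu P_t=\mu$ for every $t\ge 0$. For $f\in C_b(\Rd)$, Fubini gives
\begin{equation*}
\int P_tf\,\D{\mu_T}-\int f\,\D{\mu_T}\;=\;\frac{1}{T}\int_T^{T+t}\Exp_{x_0}[f(X_s)]\,\D{s}-\frac{1}{T}\int_0^t\Exp_{x_0}[f(X_s)]\,\D{s}\;=\;O(T^{-1}),
\end{equation*}
so to finish, one must pass the left-hand side to the limit along $T_k$, which requires the Feller property $P_tf\in C_b(\Rd)$ for $f\in C_b(\Rd)$. For merely measurable coefficients $b,k$ this is not automatic, and Theorem~\ref{T4.1} is precisely what delivers it: the Harnack estimate combined with the standard oscillation-reduction iteration of Bass--Kassmann \cite{bass-kassmann-holder} (which, as noted at the end of Section~\ref{sec-harnack}, is applicable to our class) yields local H\"older continuity of bounded harmonic functions, and thence, via the space-time extension of $\I$, of $x\mapsto P_tf(x)$ for each $t>0$. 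With this continuity in hand, the weak convergence $\mu_{T_k}\Rightarrow\mu$ together with the identity above gives $\int P_tf\,\D\mu=\int f\,\D\mu$ for all $f\in C_b(\Rd)$, so $\mu$ is an invariant probability measure for $X$.
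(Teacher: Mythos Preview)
Your Krylov--Bogolyubov plan is natural, but both steps have genuine gaps, and the paper in fact takes a different route.

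\medskip
\noindent\textbf{Tightness.} The Lyapunov hypothesis in the paper (the definition preceding \eqref{E5.01}) requires only that $\calV$ be bounded below and satisfy $\I\calV\le-\varepsilon$ on $\cK^c$; it does \emph{not} assume $\calV(x)\to\infty$. Without coercivity, Dynkin's formula gives at best a uniform bound on $\mu_T(\cK^c)$, not decay of $\mu_T(B_R^c)$. Your displayed conclusion ``$\mu_T(\cK^c\cap B_{R_0})\le C/T$'' cannot be right---it would force any limit measure to vanish on the annulus $\cK^c\cap B_{R_0}$---and the ``iterate over increasing $R_0$'' step does not repair this, since neither $\varepsilon$ nor the bound on $\I f_R$ improves with $R_0$. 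Invoking Theorem~\ref{T5.1} does not help directly either: positive recurrence controls expected return times to a fixed set, not the tail of the occupation measure, and extracting tightness from it naturally leads back through the regenerative cycle structure that the paper uses.

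\medskip
\noindent\textbf{Feller property.} This is the more serious gap. Theorem~\ref{T4.1} is an \emph{elliptic} Harnack inequality, and the oscillation argument in \cite{bass-kassmann-holder} that the paper alludes to yields H\"older continuity of \emph{elliptic} harmonic functions. To conclude that $x\mapsto P_tf(x)$ is continuous you need parabolic regularity for $\partial_t-\I$; the phrase ``via the space-time extension of $\I$'' hides a \emph{parabolic} Harnack inequality (or parabolic H\"older estimate) for operators with merely measurable drift and non-symmetric kernel, which is a separate and substantially harder theorem that the paper neither proves nor cites.

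\medskip
\noindent\textbf{What the paper does instead.} The paper avoids both issues via Has$'$minski\u{\i}'s construction. With $\cK\subset D$ as in Theorem~\ref{T5.1}, it forms the embedded chain on $\cK$ with kernel $Q(x,A)=\Prob_x(X_{\Hat\tau_2}\in A)$ and observes that $x\mapsto Q_f(x)=\Exp_x[f(X_{\Hat\tau_2})]$ is \emph{harmonic} in $D$. The elliptic Harnack inequality then applies directly to $Q_f$ and yields a Doeblin-type contraction $\bigl\lVert\mu Q-\mu' Q\bigr\rVert_{\mathrm{TV}}\le(1-C_H^{-1})\,\lVert\mu-\mu'\rVert_{\mathrm{TV}}$, hence a unique invariant measure $\Hat\mu$ for the embedded chain. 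The invariant probability for $X$ is then given explicitly by the cycle formula
\[
\int f\,\D\nu\;=\;\frac{\int_\cK\Exp_x\!\bigl[\int_0^{\Hat\tau_2}f(X_s)\,\D s\bigr]\,\Hat\mu(\D x)}{\int_\cK\Exp_x[\Hat\tau_2]\,\Hat\mu(\D x)},
\]
finite by \eqref{E5.07}. No Feller property and no coercivity of $\calV$ are needed; the elliptic Harnack inequality is used exactly once, on functions that are genuinely harmonic.
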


\begin{proof}
The proof is based on Has$'$minski\u{\i}'s construction.
Let $\cK$, $D$, $\Hat\tau_{1}$, and $\Hat\tau_{2}$ be as in the proof
of Theorem~\ref{T5.1}.
Let $\Hat X$ be a Markov process on $\cK$ with transition kernel given by
\begin{equation*}
\Hat\Prob_{x}(\D{y})\;=\;\Prob_{x}(X_{\Hat\tau_{2}}\in \D{y})\,.
\end{equation*}
Let $f$ be any bounded, nonnegative measurable function on $D$.
Define $Q_{f}(x)=\Exp_{x}[f(X_{\Hat\tau_{2}})]$. 
We claim that $Q_{f}$ is harmonic in $D$.
Indeed if we define $\Tilde f(x)=\Exp_{x}[f(X_{\tau(\cK^{c})})]$ for $x\in D^{c}$,
then by the strong Markov property we obtain
$Q_{f}(x)=\Exp_{x}[\Tilde f(X_{\Hat\tau_{1}})]$, and the claim follows.
By Theorem~\ref{T4.1} there exists a positive constant $C_{H}$,
independent of $f$, satisfying
\begin{equation}\label{E5.09}
Q_{f}(x)\;\le\; C_{H} Q_{f}(y)\qquad \forall\,x, y\in \cK\,.
\end{equation}
We note that $Q_{\bm1_{\cK}}\equiv 1$.
Let $Q(x, A)\df Q_{\bm1_{A}}(x)$, for $A\subset \cK$.
For any pair of probability measures $\mu$ and $\mu'$ on $\cK$, we 
claim that
\begin{equation}\label{E5.10}
\bnormm{\int_{\cK}\bigl(\mu(\D{x})-\mu'(\D{x})\bigr)
Q(x,\cdot\,)}_{\mathrm{TV}}\;\le\;
\frac{C_{H}-1}{C_{H}}\,\norm{\mu-\mu'}_{\mathrm{TV}}\,.
\end{equation}
This implies that the map $\mu\to\int_{\cK}Q(x,\cdot\,)\mu(\D{x})$ is a 
contraction and hence it has a unique
fixed point $\Hat\mu$ satisfying $\Hat\mu(A)=\int_{\cK}Q(x, A)\Hat\mu(\D{x})$
for any Borel set $A\subset \cK$.
In fact, $\Hat\mu$ is the invariant probability measure of the Markov chain
$\Hat X$.
Next we prove \eqref{E5.10}.
Given any two probability measure $\mu$, $\mu'$ on $\cK$, we can find subsets 
$F$ and $G$ of $\cK$ such that
\begin{align*}
\bnormm{\int_{\cK}\bigl(\mu(\D{x})-\mu'(\D{x})\bigr)
Q(x,\cdot\,)}_{\mathrm{TV}}&\;=\;
2\int_{\cK}\bigl(\mu(\D{x})-\mu'(\D{x})\bigr)\,Q(x,F)\,,
\\[5pt]
\norm{\mu-\mu'}_{\mathrm{TV}} & \;=\; 2(\mu-\mu')(G)\,.
\end{align*}
In fact, the restriction of $(\mu-\mu')$ to $G$ is a nonnegative measure and
its restriction to
$G^{c}$ it is non-positive measure.
By \eqref{E5.09}, we have
\begin{equation}\label{E5.11}
\inf_{x\in G^{c}}\;Q(x, F)\;\ge\;\sup_{x\in G}\;Q(x, F)
\end{equation}
Hence, using \eqref{E5.11}, we obtain
\begin{align*}
\bnormm{\int_{\cK}\bigl(\mu(\D{x})-\mu'(\D{x})\bigr)Q(x,\cdot\,)}_{\mathrm{TV}}
%\\[5pt]
&\;=\;
2\int_{G}\bigl(\mu(\D{x})-\mu'(\D{x})\bigr)Q(x, F)
+2\int_{G^{c}}\bigl(\mu(\D{x})-\mu'(\D{x})\bigr)Q(x,F)
\\[5pt]
&\;\le\; 2(\mu-\mu')(G)\,\sup_{x\in G}\;Q(x, F)
+ 2(\mu-\mu')(G^{c})\,\inf_{x\in G^{c}}\;Q(x, F)\\[5pt]
&\;\le\; 2(\mu-\mu')(G)\,\sup_{x\in G}\;Q(x, F)
- \frac{2}{C_{H}}(\mu-\mu')(G)\,\sup_{x\in G}\;Q(x, F)\\[5pt]
&\;\le\; \bigl(1-C_{H}^{-1}\bigr)\norm{\mu-\mu'}_{\mathrm{TV}} \,.
\end{align*}
This proves \eqref{E5.10}.

We define a probability measure $\nu$ on $\Rd$ as follows.
\begin{equation*}
\int_{\Rd}f(x)\,\nu(\D{x})
\;=\;\frac{\int_{\cK}\Exp_{x}\bigl[\int_{0}^{\Hat\tau_{2}}f(X_{s})\,\D{s}\bigr]
\Hat\mu(\D{x})}
{\int_{\cK}\Exp_{x}[\Hat\tau_{2}]\Hat\mu(\D{x})}\,, \qquad f\in C_{b}(\Rd)\,.
\end{equation*}
It is straight forward to verify that $\nu$ is an invariant probability
measure of $X$
(see for example, \cite[Theorem~2.6.9]{ari-bor-ghosh}).
\end{proof}

\begin{remark}
If $k(\cdot,\cdot)=1$ and the drift $b$ belongs to certain Kato class,
in particular bounded, (see \cite{bogdan-jakubowski}) then the transition
probability has a continuous density, and therefore any
invariant probability measure has a continuous density.
Since any two distinct ergodic measures are mutually singular,
this implies the uniqueness of the invariant probability measure.
As shown later in Proposition~\ref{P5.3} open sets have strictly positive
mass under any invariant measure.
\end{remark}

The following result is fairly standard. 

\begin{proposition}%\label{P5.2}
Let $\I\in\fL$, and $\calV\in C^{2}(\Rd)$ be a nonnegative function satisfying
satisfying $\calV(x)\to\infty$ as $\abs{x}\to\infty$, and
$\I\,\calV\le 0$ outside some compact set $\cK$.
Let $\nu$ be an invariant probability measure of the Markov process
associated with the generator $\I$.
Then 
\begin{equation*}
\int_{\Rd}|\I\,\calV(x)|\,\nu(\D{x})\;\le\; 2\int_{\cK}|\I\,\calV(x)|\,\nu(\D{x})\,.
\end{equation*}
\end{proposition}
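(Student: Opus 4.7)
The plan is to reduce the proposition to the single inequality $\int_\Rd \I\calV\,\D\nu\ge 0$. Since $\I\calV\le 0$ on $\cK^c$, one has $|\I\calV|=-\I\calV$ there, and consequently
$$
\int_\Rd|\I\calV|\,\D\nu\;=\;\int_\cK|\I\calV|\,\D\nu\;+\;\int_\cK\I\calV\,\D\nu\;-\;\int_\Rd\I\calV\,\D\nu\;\le\;2\int_\cK|\I\calV|\,\D\nu
$$
as soon as $\int_\Rd\I\calV\,\D\nu\ge 0$.

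To establish this key inequality, first note the identity $\int\I f\,\D\nu=0$ for every $f\in C^2_b(\Rd)$; this follows by taking $\Prob_\nu$-expectations in the martingale formulation, using invariance together with Fubini. Next pick a smooth cutoff $\chi_n$ with $\chi_n\equiv 1$ on $B_n$, $\chi_n\equiv 0$ outside $B_{2n}$, and $0\le\chi_n\le 1$; set $\calV_n\df\chi_n\calV\in C^2_b(\Rd)$, so that $\int\I\calV_n\,\D\nu=0$. Two pointwise facts drive the limit. For $x\in B_{n-1}$, $\chi_n\equiv 1$ in a unit neighborhood of $x$, and a direct expansion gives
$$
\I\calV_n(x)\;=\;\I\calV(x)-E_n(x)\,,\qquad
E_n(x)\;\df\;\int\bigl(1-\chi_n(x+z)\bigr)\calV(x+z)\,\uppi(x,z)\,\D z\;\ge\;0\,,
$$
with $E_n(x)\to 0$ by monotone convergence. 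For $x\in B_{2n}^c$, $\calV_n(x)=0$ and $\grad\calV_n(x)=0$, whence $\I\calV_n(x)=\int\calV_n(x+z)\,\uppi(x,z)\,\D z\ge 0$. In particular, for $n$ large enough that $\cK\subset B_{n-1}$, one has $\I\calV_n\le\I\calV\le 0$ on $B_{n-1}\setminus\cK$, and $|\I\calV_n|\ge|\I\calV|$ there.

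To close the argument, apply dominated convergence on $\cK$ (the dominating function for $E_n$ comes from \eqref{E5.02} together with the local boundedness of $k$) to get $\int_\cK\I\calV_n\,\D\nu\to\int_\cK\I\calV\,\D\nu$, and Fatou's lemma to the nonnegative sequence $(-\I\calV_n)\bm1_{B_{n-1}\setminus\cK}$ to obtain $\int_{\cK^c}|\I\calV|\,\D\nu\le\liminf_n\int_{B_{n-1}\setminus\cK}(-\I\calV_n)\,\D\nu$. Combining these with $\int\I\calV_n\,\D\nu=0$ and the nonnegativity of $\I\calV_n$ on $B_{2n}^c$ yields the bound $\int_{\cK^c}|\I\calV|\,\D\nu\le\int_\cK|\I\calV|\,\D\nu$, which is equivalent to the stated conclusion. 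The main obstacle is the annular contribution $\int_{B_{2n}\setminus B_{n-1}}\I\calV_n\,\D\nu$, where derivatives of $\chi_n$ interact with the unbounded $\calV$: showing this term has nonnegative $\liminf$ calls for a careful choice of the cutoff transition together with the decay of $\uppi$ and the integrability recorded in \eqref{E5.02}.
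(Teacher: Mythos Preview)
Your own final sentence identifies the real problem, and it is not a minor technicality: the annular contribution $\int_{B_{2n}\setminus B_{n-1}}\I\calV_n\,\D\nu$ cannot be controlled with a spatial cutoff under the stated hypotheses. On that annulus the drift term alone already contains $b(x)\cdot\calV(x)\grad\chi_n(x)$, which is of order $\abs{b(x)}\,\calV(x)/n$ for a cutoff with transition width $n$. In the class $\fL$ the drift $b$ is only locally bounded and the growth of $\calV$ is unrestricted, so this quantity need not be small; moreover you have no a priori tail information on $\nu$ (that is essentially what you are trying to prove), so $\nu(B_{2n}\setminus B_{n-1})\to 0$ gives no rate to beat the blow-up. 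The integrability in \eqref{E5.02} helps with the error term $E_{n}$ on $B_{n-1}$, but says nothing about the sign or size of $\I(\chi_n\calV)$ on the transition shell. In short, the step ``showing this term has nonnegative $\liminf$'' is the whole proof, and spatial cutoffs do not deliver it.

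The paper avoids this obstruction by truncating in the \emph{range} rather than in space: it sets $\calV_{n}=\varphi_{n}\circ\calV$ with $\varphi_{n}:\R_{+}\to\R_{+}$ smooth, nondecreasing and \emph{concave}, equal to the identity on $[0,n]$ and constant beyond $n+1$. Concavity gives the pointwise inequality
\[
\I\calV_{n}(x)\;\le\;\varphi_{n}'\bigl(\calV(x)\bigr)\,\I\calV(x)\qquad\text{for all }x\in\Rd\,,
\]
because $\varphi_{n}(y)\le\varphi_{n}(x)+\varphi_{n}'(x)(y-x)$ applied to $y=\calV(x+z)$ controls the jump part, and the chain rule handles the gradient. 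Since $\varphi_{n}'\ge 0$, this is $\le 0$ everywhere on $\cK^{c}$, for every $n$, so there is no transition zone to worry about. From $\int\I\calV_{n}\,\D\nu=0$ one then gets
\[
\int_{\Rd}\abs{\I\calV_{n}}\,\D\nu\;=\;2\int_{\cK}(\I\calV_{n})^{+}\,\D\nu\;\le\;2\int_{\cK}\abs{\I\calV_{n}}\,\D\nu\,,
\]
and passes to the limit with Fatou on the left and dominated convergence on $\cK$ on the right (the domination on $\cK$ again uses \eqref{E5.02}). The concave composition is the key idea your spatial cutoff is missing.
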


\begin{proof}
Let $\varphi_{n}:\R_{+}\to\R_{+}$ be a smooth non-decreasing,
concave, function such that
\begin{equation*}
\varphi_{n}(x)\;=\;\begin{cases}
x & \text{for}~x\le n\,,\\
n+\nicefrac{1}{2} & \text{for}~x\ge n+1\,.
\end{cases}
\end{equation*}
Due to concavity we have $\varphi_{n}(x)\le \abs{x}$ for all $x\in\R_{+}$.
Then $\calV_{n}(x)\df\varphi_{n}(\calV(x))$ is in $C^{2}_{b}(\Rd)$ and
it also follows that $\I\,\calV_{n}(x)\to\I\,\calV(x)$ as $n\to\infty$.
Since $\nu$ is an invariant probability measure, it holds that
\begin{equation}\label{E5.12}
\int_{\Rd}\I\,\calV_{n}(x)\,\nu(\D{x})\;=\;0\,.
\end{equation}
By concavity,
$\varphi_{n}(y)\le \varphi_{n}(x)+(y-x)\cdot\varphi_{n}'(x)$ for all
$x,y\in\R_{+}$.
Hence
\begin{align*}
\I\,\calV_{n}(x)
&\;=\; \int_{\Rds}\dd \calV_{n}(x;z)\,
\frac{k(x,z)}{\abs{z}^{d+\alpha}}\,\D{z}
+ \varphi_{n}'(\calV(x))\,b(x)\cdot\grad\calV(x)
\\[5pt]
&\;\le\; \int_{\Rds}\varphi_{n}'(\calV(x))\,\dd \calV(x;z)\,
\frac{k(x,z)}{\abs{z}^{d+\alpha}}\,\D{z}
+ \varphi_{n}'(\calV(x))\,b(x)\cdot\grad\calV(x)
\\[5pt]
&\;=\;\varphi_{n}'(\calV(x))\,\I\,\calV(x)\,,
\end{align*}
which is negative for $x\in \cK^{c}$.
Therefore using \eqref{E5.12} we obtain
\begin{align}\label{E5.13}
\int_{\Rd}|\I\,\calV_{n}(x)|\,\nu(\D{x})
&\;=\;\int_{\cK}|\I\,\calV_{n}(x)|\,\nu(\D{x})
-\int_{\cK^{c}}\I\,\calV_{n}(x)\,\nu(\D{x})
\nonumber\\[5pt]
&\;=\;\int_{\cK}|\I\,\calV_{n}(x)|\,\nu(\D{x})
+\int_{\cK}\I\,\calV_{n}(x)\,\nu(\D{x})
\nonumber\\[5pt]
&\;\le\; 2\int_{\cK}|\I\,\calV_{n}(x)|\,\nu(\D{x})\,.
\end{align}
On the other hand, with $A_{n}\df\{y\in\Rd\,\colon \calV(y)\ge n\}$,
and provided $\calV(x)<n$, we have
\begin{align*}
\abs{\I\,\calV_{n}(x)} &\;\le\; \abs{\I\,\calV(x)}
+\int_{x+z\in A_{n}} \abs{\calV(x+z)-\calV_{n}(x+z)}\,
\frac{k(x,z)}{\abs{z}^{d+\alpha}}\,\D{z}\\[5pt]
&\;\le\; \abs{\I\,\calV(x)}
+\int_{x+z\in A_{n}} \abs{\calV(x+z)}\,
\frac{k(x,z)}{\abs{z}^{d+\alpha}}\,\D{z}\,.
\end{align*}
This together with \eqref{E5.02} imply that there exists a constant
$\kappa$ such that
\begin{equation*}
\abs{\I\,\calV_{n}(x)} \;\le\; \kappa+\abs{\I\,\calV(x)}
\qquad \forall\,x\in\cK\,,
\end{equation*}
and all large enough $n$.
Therefore, letting $n\to\infty$ and using Fatou's lemma for the term on the
left hand side of \eqref{E5.13}, and the
dominated convergence theorem for the term on the right hand side,
we obtain the result.
\end{proof}

\subsection{A class of operators with variable order kernels}
It is quite evident from Theorem~\ref{T5.2} that the Harnack inequality plays
a crucial role in the analysis.
Therefore one might wish to establish positive recurrence
for an operator with a variable order kernel, and deploy
the Harnack inequality from \cite{bass-kassmann} to prove a similar result
as in Theorem~\ref{T5.2}.

\begin{theorem}\label{T5.3}
Let $\uppi:\Rd\times\Rd\to\Rd$ be a nonnegative measurable function satisfying the
following properties, for $1<\alpha'<\alpha<2$:
\begin{itemize}
\item[(a)]
There exists a constant $c_{1}>0$ such that
$\bm1_{\{\abs{z}>1\}}\uppi(x,z)\le \frac{c_{1}}{\abs{z}^{d+\alpha'}}$
for all $x\in\Rd$;
\item[(b)]
There exists a constant $c_{2}>0$ such that
\begin{equation*}
\uppi(x,z-x)\;\le\; c_{2}\, \uppi(y,y-z)\,,\quad \text{whenever}
\quad \abs{z-x}\wedge\abs{z-y}\ge 1\,,\;\abs{x-y}\le 1\,;
\end{equation*}
\item[(c)]
For each $R>0$ there exists $q_{R}>0$ such that
\begin{equation*}
\frac{q_{R}^{-1}}{\abs{z}^{d+\alpha'}}\;\le\; \uppi(x,z)
\;\le\; \frac{q_{R}}{\abs{z}^{d+\alpha}}\qquad
\forall x\in\Rd\,,\;\forall z\in B_{R}\,;
\end{equation*}
\item[(d)]
For each $R>0$ there exists $R_{1}>0$, $\sigma\in(1,2)$,
and $\kappa_{\sigma}=\kappa_{\sigma}(R,R_{1})>0$ such that
\begin{equation*}
\frac{\kappa_{\sigma}^{-1}}{\abs{z}^{d+\sigma}}\;\le\;
\uppi(x,z)\;\le\; \frac{\kappa_{\sigma}}{\abs{z}^{d+\sigma}}\qquad
\forall x\in B_{R}\,,\;\forall z\in B_{R_{1}}^{c};
\end{equation*}
\item[(e)]
There exists $\calV\in C^{2}(\Rd)$ that is bounded from below in $\Rd$,
a compact set $\cK\subset\Rd$ and a constant $\varepsilon>0$, such that 
\begin{equation*}
\int_{\Rds}\dd \calV(x;z)\,
\uppi(x,z)\,\D{z}\;<\;-\varepsilon\quad \forall x\in \cK^{c}\,.
\end{equation*}
\end{itemize}
Then the Markov process associated with the above kernel has an
invariant probability measure.
\end{theorem}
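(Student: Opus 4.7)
The plan is to repeat the Has$'$minski\u{\i} construction used to prove Theorem~\ref{T5.2}, with the internal Harnack inequality of Theorem~\ref{T4.1} replaced by the Harnack inequality for variable order jump processes established in \cite{bass-kassmann}. Hypotheses (a)--(d) are engineered precisely to fit the setting of that reference, so once the Harnack inequality is available inside some ball $D$ containing the Lyapunov set $\cK$, the remainder of the construction is purely soft and only requires notational modifications relative to Theorem~\ref{T5.2}.

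First I would establish positive recurrence, mirroring Lemma~\ref{L5.1} and Theorem~\ref{T5.1}. Applying a $C^{2}_{b}$ cutoff $f=\chi\,\calV$ to the Lyapunov function, the error introduced in Dynkin's formula is controlled by $\int_{\{\abs{x+z}\ge R_{1}\}}\abs{\calV(x+z)}\,\uppi(x,z)\,\D{z}$, which we can make smaller than $\varepsilon/2$ uniformly in $x\in\cK$ via condition (a): on the relevant range $\abs{z}\ge R_{0}$ we have $\uppi(x,z)\le c_{1}\abs{z}^{-d-\alpha'}$, and $\calV$ is integrable at infinity against $\abs{z}^{-d-\alpha'}$ by the same argument that produces \eqref{E5.02}. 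This yields $\Exp_{x}[\tau(\cK^{c})]\le\frac{2}{\varepsilon}\bigl(\calV(x)+(\inf\calV)^{-}\bigr)$. Non-explosion, used implicitly throughout, follows by Dynkin's argument of Lemma~\ref{L3.3} using $\calV$ itself as a supermartingale outside $\cK$, so the tail moment bound \eqref{ee3.2} required by Lemma~\ref{L3.1} is satisfied thanks to condition (a) (first-moment tail) and condition (c) (infinite second moment on $B_{R}$). The finiteness of $\sup_{x\in\cK}\Exp_{x}[\Hat\tau_{2}]$ then reduces, via the L\'evy-system, to bounding the exit distribution $\scrP_{\Hat\tau_{1}}(x,\cdot\,)$ on $B_{R}^{c}$ by a multiple of $\abs{y}^{-d-\alpha'}\,\D{y}$---again exactly what (a) provides.

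With positive recurrence in hand, fix an open ball $D\supset\cK$ and for bounded nonnegative $f$ on $\cK$ define $Q_{f}(x)\df\Exp_{x}[f(X_{\Hat\tau_{2}})]$. By the strong Markov property, $Q_{f}$ coincides on $D$ with $x\mapsto\Exp_{x}[\Tilde{f}(X_{\Hat\tau_{1}})]$ for $\Tilde{f}(x)\df\Exp_{x}[f(X_{\tau(\cK^{c})})]$, hence is harmonic in $D$ with respect to $\I$. After verifying that (a)--(d) match the structural assumptions in \cite{bass-kassmann}, their Harnack inequality produces a constant $C_{H}$, independent of $f$, with $Q_{f}(x)\le C_{H}Q_{f}(y)$ for all $x,y\in\cK$. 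The total-variation contraction \eqref{E5.10} then yields a unique invariant probability measure $\Hat\mu$ of the induced chain on $\cK$, and
\[
\int_{\Rd}f(x)\,\nu(\D{x})\;\df\;
\frac{\int_{\cK}\Exp_{x}\bigl[\int_{0}^{\Hat\tau_{2}}f(X_{s})\,\D{s}\bigr]\,\Hat\mu(\D{x})}{\int_{\cK}\Exp_{x}[\Hat\tau_{2}]\,\Hat\mu(\D{x})}
\]
defines the desired invariant probability measure of $X$ (see \cite[Theorem~2.6.9]{ari-bor-ghosh}). The principal obstacle is the bookkeeping required to align (a)--(d) with the hypotheses of \cite{bass-kassmann}: (b) furnishes the required comparability of large-jump rates at nearby base points, (c) is the small-scale ellipticity of the kernel, and (d) provides the stable-like lower and upper bounds at large scales inside compacts that underpin the Krylov--Safonov type argument there. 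A secondary technical nuisance is the control of exit-time moments in the absence of the explicit growth bound~\eqref{E3.5}; this is absorbed into the uniform tail bound of $\uppi$ supplied by (a) together with the Lyapunov control~(e).
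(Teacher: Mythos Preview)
Your proposal is correct and matches the paper's approach exactly: the paper's proof of Theorem~\ref{T5.3} is a two-line sketch stating that assumptions (a)--(c) yield the Harnack inequality via \cite{bass-kassmann}, after which the Has$'$minski\u{\i} construction from Theorem~\ref{T5.2} carries over verbatim. One small point of bookkeeping: the paper attributes the Harnack property to the \emph{first three} hypotheses (a)--(c), not (a)--(d); condition~(d) is what replaces the missing structural bounds of $\fL(\lambda)$ (local two-sided comparability to a fixed stable kernel) needed for the exit-time and hitting estimates that feed into the positive-recurrence step, rather than being part of the Bass--Kassmann input.
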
 

The first three assumptions guarantee the Harnack property for
associated harmonic functions \cite{bass-kassmann}.
Then the conclusion of Theorem~\ref{T5.3} follows by using an argument
similar to the one used in the proof of
Theorem~\ref{T5.2}.

Next we present an example of a kernel $\uppi$ that
satisfies the conditions in Theorem~\ref{T5.3}.
We accomplish this by adding a non-symmetric bump
function to a symmetric kernel.

\begin{example}
Let $\varphi:\Rd\to[0,1]$ be a smooth function such that
\begin{equation*}
\varphi(x)\;=\;\begin{cases}
1 & \text{for}~\abs{x}\le \frac{1}{2}\,,\\[5pt]
0 & \text{for}~\abs{x}\ge 1\,.
\end{cases}
\end{equation*}
Define for $1< \alpha'<\beta'< \alpha<2$,
\begin{equation*}
\gamma(x,z)\;\df\;\varphi\biggl(2\frac{x+z}{1+\abs{x}}\biggr)(1-\varphi(4x))
(\alpha'-\beta')\,,
\end{equation*}
and let 
\begin{align*}
\widetilde\uppi(x,z)&\;\df\;\frac{1}{\abs{z}^{d+\beta'+\gamma(x,z)}}\,,\\
\uppi(x,z)&\;\df\;\frac{1}{\abs{z}^{d+\alpha}}+\widetilde\uppi(x,z)\,.
\end{align*}
We prove that $\uppi$ satisfies the conditions of Theorem~\ref{T5.3}.
Let us also mention that there exists a unique solution to the
martingale problem corresponding to the kernel $\uppi$ \cite{Komatsu-73, komatsu}.
We only show that conditions (b) and (e) hold.
It is straightforward to verify (a), (c) and (d).

Note that $\alpha'-\beta'\le \gamma(x,z)\le 0$ for all $x,z$.
Let $x,\, y,\, z\in\Rd$ such that $|x-z|\wedge|y-z|\ge 1$ and $\abs{x-y}\le 1$.
Then $\abs{z-y}\le 1+\abs{z-x}$.
By a simple calculation we obtain
\begin{align*}
\widetilde\uppi(x,z-x) & \;\le\;
\biggl(1+\frac{1}{\abs{z-x}}\biggr)^{d+\beta'+\gamma(x,z-x)}
\frac{1}{\abs{z-y}^{d+\beta'+\gamma(x,z-x)}}\\[5pt]
&\;\le\; 2^{d+\beta'}\frac{1}{\abs{z-y}^{d+\beta'+\gamma(y, z-y)}}
\abs{z-y}^{-\beta'(x,z-x)
+\gamma(y, z-y)}\,.
\end{align*}
Hence it is enough to show that
\begin{equation}\label{E5.14}
\abs{z-y}^{-\gamma(x,z-x)+\gamma(y, z-y)}\;<\;\varrho
\end{equation}
for some constant $\varrho$ which does not depend on $x$, $y$ and $z$.
Note that if $\abs{x}\le 2$, which implies that
$\abs{y}\le 3$, then for $\abs{z}\ge 4$ we have 
$\gamma(x,z-x)=0=\gamma(y, z-y)$.
Therefore for $\abs{x}\le 2$, it holds that
\begin{equation}\label{E5.15}
\abs{z-y}^{-\gamma(x,z-x)+\gamma(y, z-y)}\;\le\; 7^{\beta'-\alpha'}\,.
\end{equation}
Suppose that $\abs{x}\ge 2$. Then $\abs{y}\ge 1$.
Since we only need to consider the case
where $\gamma(x,z-x)\neq \gamma(y, z-y)$
we restrict our attention to $z\in\Rd$ such that $\abs{z}\le 2(1+\abs{x})$.
We obtain
\begin{align}\label{E5.16}
\log(\abs{z-y})(-\gamma(x,z-x)+\gamma(y, z-y))&\;\le\;
\log\bigl(3(1+\abs{x})\bigr)\,\norm{\varphi'}_{\infty}\,
\frac{2\abs{z}(\beta'-\alpha')}{(1+\abs{x})(1+\abs{y})}\nonumber\\[5pt]
&\;\le\; \log\bigl(3(1+\abs{x})\bigr)\,\norm{\varphi'}_{\infty}\,
\frac{4(1+\abs{x})(\beta'-\alpha')}{(1+\abs{x})\abs{x}}\,.
\end{align}
Since the term on the right hand side of \eqref{E5.16}
is bounded in $\Rd$, the bound in \eqref{E5.14} follows by
\eqref{E5.15}--\eqref{E5.16}.

Next we prove the Lyapunov property.
We fix a constant $\eta\in(\alpha',\beta')$, and choose
some function $\calV\in C^{2}(\Rd)$ such that
$\calV(x) = \abs{x}^{\eta}$ for $\abs{x}>1$.
Since $\widetilde\uppi(x,z)\,\le\,\frac{1}{\abs{z}^{d+\alpha'}}$ for
all $x\in\Rd$ and $z\in\Rd_{*}$, it follows that
\begin{equation*}
x\;\mapsto\;
\babss{\int_{\abs{z}\le 1} \dd \calV(x;z)\,\widetilde\uppi(x,z)\,\D{z}}
\end{equation*}
is bounded by some constant on $\Rd$.
By \eqref{E5.05},
\begin{equation*}
\babss{\int_{\Rds}\dd \calV(x;z)\,\uppi(x,z)\,\D{z}}
\;\le\; c_{0}\,(1+\abs{x}^{\eta-\alpha})\qquad\forall x\in\Rd\,,
\end{equation*}
for some constant $c_{0}$.
Therefore, in view of \eqref{E5.04}, it is enough to show that for
$\abs{x}\ge 4$, there exist positive constants $c_{1}$ and $c_{2}$ such that
\begin{equation}\label{E5.17}
\int_{\abs{z}>1}
\bigl(\abs{x+z}^\eta-\abs{x}^\eta\bigr)\,\widetilde\uppi(x,z)\,\D{z}\;\le\;
c_{1}-c_{2}\abs{x}^{\eta-\alpha'}\,.
\end{equation}
By the definition of $\gamma$ it holds that
\begin{equation}\label{E5.18}
\widetilde\uppi(x,z)\;=\;\frac{1}{\abs{z}^{d+\beta'}}\,,\qquad
\text{if~}\abs{x+z}\,\ge\,\frac{3}{4}\,\abs{x}\,,~\text{and~}\abs{x}\,\ge\,2\,,
\end{equation}
while
\begin{equation}\label{E5.19}
\widetilde\uppi(x,z)\;=\;\frac{1}{\abs{z}^{d+\alpha'}}\,,\qquad
\text{if~}\abs{x+z}\,\le\,\frac{\abs{x}}{4}\,.
\end{equation}
Suppose that $\abs{x}>2$.
Since $\abs{x+z}\,\le\,\frac{\abs{x}}{4}$ implies
that $\frac{3}{4}\abs{x}\,\le\,\abs{z}\le\frac{5}{4}\abs{x}$,
we obtain by \eqref{E5.19} that
\begin{align}\label{E5.20}
\int_{\abs{x+z}\le\frac{\abs{x}}{4},\,\abs{z}>1}
\bigl(\abs{x+z}^\eta-\abs{x}^\eta\bigr)\,\widetilde\uppi(x,z)\,\D{z}
&\;\le\;
-\int_{\abs{x+z}\le\frac{\abs{x}}{4}}
\left(1-\tfrac{1}{4^\eta}\right)\abs{x}^\eta\,\left(\tfrac{4}{5}\right)^{d+\alpha'}
\frac{1}{\abs{x}^{d+\alpha'}}\,\D{z}\nonumber\\[5pt]
&\;\le\;
-\left(1-\tfrac{1}{4^\eta}\right)\,\left(\tfrac{4}{5}\right)^{d+\alpha'}\,
\abs{x}^{\eta-\alpha'}\int_{\abs{x+z}\le\frac{\abs{x}}{4}}
\frac{\D{z}}{\abs{x}^{d}}\nonumber\\[5pt]
&\;\le\;
-m_{1}\,\abs{x}^{\eta-\alpha'}\,,\qquad\text{if~} \abs{x}>2\,,
\end{align}
for some constant $m_{1}>0$, where we use the fact
that the integral in the
second inequality is independent of $x$ due to rotational invariance.
Also, $\abs{x+z}\le \frac{3}{4}\abs{x}$ implies
$\frac{1}{4}\abs{x}\le \abs{z}\le \frac{7}{4}\abs{x}$, and
in a similar manner, using \eqref{E5.18}, we obtain
\begin{align}\label{E5.21}
\int_{\abs{x+z}\le\frac{3\abs{x}}{4},\,\abs{z}>1}
\bigl(\abs{x+z}^\eta-\abs{x}^\eta\bigr)\,\frac{1}{\abs{x}^{d+\beta'}}\,\D{z}
&\;\ge\;
-\int_{\frac{1}{4}\abs{x}\le \abs{z}\le \frac{7}{4}\abs{x}}
\abs{x}^\eta\,4^{d+\beta'}
\frac{1}{\abs{x}^{d+\beta'}}\,\D{z}\nonumber\\[5pt]
&\;\ge\;
-m_{2}\,\abs{x}^{\eta-\beta'}\,,\qquad\text{if~} \abs{x}>2\,,
\end{align}
for some constant $m_{2}>0$. 
Let $A_1\df \bigl\{z\, : \frac{1}{4}\abs{x}\le \abs{x+z}\le
\frac{3}{4}\abs{x}\bigr\}$.
Since $\eta$ is positive, we have
\begin{equation*}
\int_{\{\abs{z}\geq 1\}\cap A_1}
\bigl(\abs{x+z}^\eta-\abs{x}^\eta\bigr)\widetilde\uppi(x,z)\, \D{z}
\;\le\;0\,.
\end{equation*}
Thus, combining this observation with \eqref{E5.03} and \eqref{E5.21}, we obtain
\begin{align}\label{E5.22}
\int_{\abs{x+z}>\frac{\abs{x}}{4},\,\abs{z}>1}
\bigl(\abs{x+z}^\eta-\abs{x}^\eta\bigr)\,\widetilde\uppi(x,z)\,\D{z}
&\;\le\;
\int_{\abs{x+z}>\frac{3}{4}\abs{x},\,\abs{z}>1}
\bigl(\abs{x+z}^\eta-\abs{x}^\eta\bigr)\,\frac{1}{\abs{z}^{d+\beta'}}\,\D{z}
\nonumber\\[5pt]
&\;=\;
\int_{\abs{z}>1}
\bigl(\abs{x+z}^\eta-\abs{x}^\eta\bigr)\,\frac{1}{\abs{z}^{d+\beta'}}\,\D{z}
\nonumber\\[5pt]
&\mspace{50mu}-
\int_{\abs{x+z}\le\frac{3\abs{x}}{4},\,\abs{z}>1}
\bigl(\abs{x+z}^\eta-\abs{x}^\eta\bigr)\,\frac{1}{\abs{x}^{d+\beta'}}\,\D{z}
\nonumber\\[5pt]
&\;\le\;
m_{3}\,(1+\abs{x}^{\eta-\beta'})
\end{align}
for some constant $m_{3}>0$.
Combining \eqref{E5.20} and \eqref{E5.22}, we obtain
\begin{equation}\label{E5.23}
\int_{\abs{z}>1}
\bigl(\abs{x+z}^\eta-\abs{x}^\eta\bigr)\,\widetilde\uppi(x,z)\,\D{z}
\;\le\;
m_{3}\,(1+\abs{x}^{\eta-\beta'})
-m_{1}\,\abs{x}^{\eta-\alpha'}
\,,\qquad\text{if~} \abs{x}>2\,.
\end{equation}
Therefore, \eqref{E5.17} follows by \eqref{E5.23},
and the Lyapunov property holds.
\end{example}

\begin{proposition}\label{P5.3}
Let $D$ be any bounded open set in $\Rd$ and $X$ be a Markov process
associated with either $\I\in\fL$, or a generator
with kernel $\uppi$ as in Theorem~\ref{T5.3}.
Suppose that for any compact set $K$ and any open set $G$, it holds that
$sup_{x\in K}\Prob_{x}(\tau(G^{c})>T)\to 0$ as $T\to \infty$.
Then for any invariant probability measure $\nu$ of $X$ we have $\nu(D)>0$.
\end{proposition}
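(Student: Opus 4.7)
The plan is to argue by contradiction: suppose that $\nu(D)=0$ for some invariant probability measure $\nu$ and some bounded open set $D$, and derive a contradiction with the hypothesis on exit times.

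By invariance of $\nu$, for every $t\ge 0$ we have $\Prob_{\nu}(X_{t}\in D)=\nu(D)=0$. Fubini's theorem then yields
\[
\Exp_{\nu}\biggl[\int_{0}^{T}\bm1_{\{X_{s}\in D\}}\,\D{s}\biggr]\;=\;\int_{0}^{T}\Prob_{\nu}(X_{s}\in D)\,\D{s}\;=\;0\qquad\forall\,T>0,
\]
so with $\Prob_{\nu}$-probability one the random set $E_{T}\df\{s\in[0,T)\,\colon X_{s}\in D\}$ has Lebesgue measure zero. Since $D$ is open and the paths are c\`adl\`ag, any $s_{0}\in E_{T}$ would force $X_{s'}\in D$ on some right neighbourhood $[s_{0},s_{0}+\delta)$, giving $|E_{T}|\ge \delta\wedge(T-s_{0})>0$, a contradiction. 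Hence $E_{T}=\emptyset$ $\Prob_{\nu}$-a.s., and taking a countable union over integer $T$ we obtain $\Prob_{\nu}\bigl(\tau(D^{c})=\infty\bigr)=1$.

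To close the argument I exploit the tightness hypothesis. Since $\nu$ is a probability measure on $\Rd$, there exists a compact set $K$ with $\nu(K)>0$. Applying the hypothesis with $G=D$, I choose $T_{0}$ large enough that $\sup_{x\in K}\Prob_{x}(\tau(D^{c})>T_{0})\le \nicefrac{1}{2}$. Integrating against $\nu$ yields
\[
1\;=\;\Prob_{\nu}\bigl(\tau(D^{c})>T_{0}\bigr)\;=\;\int_{\Rd}\Prob_{x}\bigl(\tau(D^{c})>T_{0}\bigr)\,\nu(\D{x})\;\le\;\tfrac{1}{2}\nu(K)+\nu(K^{c})\;=\;1-\tfrac{1}{2}\nu(K)\;<\;1,
\]
a contradiction, and so $\nu(D)>0$.

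The most delicate step is the second one: upgrading the Lebesgue-a.e.\ conclusion from Fubini to the pointwise statement that $X_{s}\notin D$ for every $s\in[0,T)$ uses crucially both the openness of $D$ and the right-continuity of paths, and would fail for a generic measurable target set. Note that no structural property of the generator $\I$ enters the argument beyond the existence of a c\`adl\`ag strong Markov process, so the proof applies uniformly to both classes of operators covered by the proposition.
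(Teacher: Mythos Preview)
Your proof is correct and is genuinely simpler than the paper's. The paper argues by contradiction as well, but instead of the soft path argument you give, it picks balls $B_{r}(x_{0})\subset B_{2r}(x_{0})\subset D$, invokes Lemma~\ref{L3.5} (an exit-time estimate specific to the classes of operators under consideration) to find $t_{0}>0$ with $\inf_{x\in B_{r}(x_{0})}\Prob_{x}\bigl(\tau(B_{r}(x))\ge t_{0}\bigr)\ge\nicefrac{1}{2}$, and then applies the strong Markov property at the hitting time of $B_{r}(x_{0})$ to bound from below the time-averaged occupation of $B_{2r}(x_{0})$ under $\Prob_{\nu}$, obtaining the explicit estimate $\nu(D)\ge \frac{t_{0}}{T_{0}+t_{0}}\,\frac{\nu(K)}{4}$.

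Your route bypasses both Lemma~\ref{L3.5} and the strong Markov property: the step ``$|E_{T}|=0$ a.s.\ $\Rightarrow$ $E_{T}=\varnothing$ a.s.'' uses only right-continuity of paths and openness of $D$, and the contradiction then follows directly from the hitting-time hypothesis. As you note at the end, nothing about the generator enters, so your argument applies to any c\`adl\`ag Markov process satisfying the hypothesis, whereas the paper's argument is tied to the quantitative input from Lemma~\ref{L3.5}. What the paper's approach buys in exchange is a concrete lower bound on $\nu(D)$ rather than mere positivity; your argument gives no such quantitative information, but none is needed for the statement.
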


\begin{proof}
We argue by contradiction.
Suppose $\nu(D)=0$.
Let $x_{0}\in D$ and $r\in(0,1)$ be such that $B_{2r}(x_{0})\subset D$.
By Lemma~\ref{L3.5} and Remark~\ref{R3.2}
(see also \cite[Proposition 3.1]{bass-kassmann}), we have
\begin{equation*}
\sup_{x\in B_{r}(x_{0})}\;\Prob_{x}\bigl(\tau(B_{r}(x))\le t\bigr)\;\le\;
\kappa\, t\,,\quad t>0\,,
\end{equation*}
for some constant $\kappa$ which depends on $r$.
Therefore there exists $t_{0}>0$ such that
\begin{equation*}
\inf_{x\in B_{r}(x_{0})}\;\Prob_{x}\bigl(\tau(B_{r}(x))\;\ge\;
t_{0}\bigr)\;\ge\; \frac{1}{2}\,.
\end{equation*}
Let $K$ be a compact set satisfying $\nu(K)>\frac{1}{2}$.
By the hypothesis there exists $T_{0}>0$ such that 
$\sup_{x\in K}\Prob_{x}(\tau(B^{c}_{r}(x_{0})>T)\le\nicefrac{1}{2}$
for all $T\ge T_{0}$.
Hence
\begin{align*}
0\;=\;\nu(D)&\;\ge\; \frac{1}{T_{0}+t_{0}}
\int_{0}^{T_{0}+t_{0}}\int_{\Rd}\nu(\D{x})P(t,x; B_{2r}(x_{0}))\,\D{t}
\\[5pt]
&\;=\;\frac{1}{T_{0}+t_{0}}\int_{\Rd}\nu(\D{x})\,
\Exp_{x}\Biggl[\int_{0}^{T_{0}+t_{0}}\bm1_{\{B_{2r}(x_{0})\}}(X_{s})\,\D{t}\Biggr]
\\[5pt]
&\;\ge\;\frac{1}{T_{0}+t_{0}}\int_{K}\nu(\D{x})\,
\Exp_{x}\Biggl[\bm1_{\{\tau(B^{c}_{r}(x_{0}))\le T_{0}\}}
\Exp_{X_{\tau(B^{c}_{r}(x_{0}))}}
\biggl[\bm1_{\{\tau(B_{2r}(x_{0}))\ge t_{0}\}}\\[5pt]
&\mspace{400mu}
\int_{\tau(B_{r}(x_{0}))}^{T_{0}+t_{0}}
\bm1_{\{B_{2r}(x_{0})\}}(X_{s})\,\D{t}\biggr]\Biggr] 
\\[5pt]
&\;\ge\;\frac{1}{T_{0}+t_{0}}\nu(K)
\inf_{x\in K}\;\Prob_{x}\bigl(\tau(B^{c}_{r}(x_{0}))\le T_{0}\bigr)
\inf_{x\in B_{r}(x_{0})}\;\Prob_{x}\bigl(\tau(B_{2r}(x_{0}))\ge t_{0}\bigr)\,t_{0}
\\[5pt]
&\;\ge\;\frac{1}{T_{0}+t_{0}}\frac{\nu(K)}{2}\,
\inf_{x\in B_{r}(x_{0})}\;\Prob_{x}\bigl(\tau(B_{r}(x))\ge t_{0}\bigr)\,t_{0}
\\[5pt]
&\;\ge\; \frac{t_{0}}{T_{0}+t_{0}}\frac{\nu(K)}{4}\;>\;0\,.
\end{align*}
But this is a contradiction. Hence $\nu(D)>0$.
\end{proof}

\subsection{Mean recurrence times for weakly H\"older continuous kernels}

This section is devoted to the characterization of the mean
hitting time of bounded open sets
for Markov processes with generators studied in Section~\ref{S3.2}.
The results hold for any bounded domain $D$ with $C^{2}$ boundary, but
for simplicity we state them for the unit ball centered at $0$.
As introduced earlier, we use the notation $B\equiv B_{1}$.

For nondegenerate continuous diffusions, it is well known that
if some bounded domain $D$ is positive recurrent with respect to 
some point $x\in \Bar{D}^{c}$, then the process is positive recurrent
and its generator satisfies the Lyapunov stability hypothesis in \eqref{E5.01}
\cite[Lemma~3.3.4]{ari-bor-ghosh}).
In Theorem~\ref{T5.4} we show that the same property holds for the class of
operators $\mathfrak{I}_{\alpha}(\beta,\theta,\lambda)$.

\begin{theorem}\label{T5.4}
Let $\I\in\mathfrak{I}_{\alpha}(\beta,\theta,\lambda)$.
We assume that $\I$ satisfies the growth condition in \eqref{E3.5}.
Moreover, we assume that 
$\Exp_{x}[\tau(B^{c})]<\infty$ for some $x$ in $\Bar{B}^{c}$.
Then $u(x)\df \Exp_{x}[\tau(B^{c})]$ is a viscosity solution to 
\begin{align*}
\I u &\;=\;-1 \quad \text{in}~\Bar{B}^{c}\,,\\
u& \;=\;0 \quad \text{in}~\Bar{B}\,.
\end{align*}
\end{theorem}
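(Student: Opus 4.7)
The strategy will be to approximate $u$ on expanding annular domains $D_n \df B_n \setminus \bar{B}$ using the classical solutions provided by Theorem~\ref{T3.1}, and to pass to the limit using the Harnack inequality (Theorem~\ref{T4.1}) together with a Dynkin-type verification of the viscosity equation at test points. Throughout, let $x_0\in\bar B^c$ denote the point at which $u(x_0)=\Exp_{x_0}[\tau(\bar B^c)]<\infty$.

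First I would set $v_n(x)\df\Exp_x[\tau(D_n)]$, extended by $0$ outside $D_n$. By Theorem~\ref{T3.1} each $v_n$ is a classical, bounded, continuous solution of $\I v_n=-1$ in $D_n$ with zero exterior data, and $v_n\nearrow u$ pointwise by monotone convergence. The key observation is that the difference $g_n\df u-v_n$ is nonnegative on $\Rd$ (using $\tau(D_n)\le\tau(\bar B^c)$, $u\equiv 0$ on $\bar B$, and $g_n=u$ on $B_n^c$) and is harmonic in $D_n$ with respect to $X$, by a direct application of the strong Markov property. Since $g_n$ may not be globally bounded, I will invoke Theorem~\ref{T4.1} not for $g_n$ itself but for its truncations $g_n^M(x)\df\Exp_x[(u\wedge M)(X_{\tau(D_n)})\bm1_{\{X_{\tau(D_n)}\in B_n^c\}}]$, which are bounded by $M$, nonnegative, and harmonic in $D_n$. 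Fixing a compact $K\subset\bar B^c$ containing $x_0$ and an $N$ with $K\subset D_N$, Harnack applied on $D_N$ (for $n\ge N$) gives $g_n^M\le C_H g_n^M(x_0)$ on $K$ with $C_H$ independent of $n$ and $M$; letting $M\to\infty$, monotone convergence yields $g_n\le C_H g_n(x_0)$ on $K$. Since $v_n(x_0)\nearrow u(x_0)<\infty$, we have $g_n(x_0)\to 0$, so $v_n\to u$ uniformly on compacts in $\bar B^c$. In particular $u$ is locally bounded and continuous on $\bar B^c$.

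Next, to verify the viscosity PDE on $\bar B^c$, fix $x\in\bar B^c$ and let $\varphi\in C^2(\Rd)$ touch $u$ from above at $x$, i.e.\ $\varphi(x)=u(x)$ and $\varphi>u$ on $\Rd\setminus\{x\}$; we may assume $\varphi$ is bounded by replacing it with $u$ outside a small ball around $x$. Pick $r>0$ with $\bar B_r(x)\subset\bar B^c$, let $\sigma\df\tau(B_r(x))$, and note $\Exp_x[\sigma]<\infty$ by Lemma~\ref{L3.1}. The strong Markov property and $\sigma\le\tau(\bar B^c)$ give
\begin{equation*}
u(x)\;=\;\Exp_x[\sigma]+\Exp_x[u(X_\sigma)]\;\le\;\Exp_x[\sigma]+\Exp_x[\varphi(X_\sigma)]\,,
\end{equation*}
while Dynkin's formula applied to $\varphi\in C^2_b(\Rd)$ yields
\begin{equation*}
\Exp_x[\varphi(X_\sigma)]-\varphi(x)\;=\;\Exp_x\biggl[\int_0^\sigma\I\varphi(X_s)\,\D{s}\biggr]\,.
\end{equation*}
Combining these two displays and using $\varphi(x)=u(x)$ produces $\Exp_x\bigl[\int_0^\sigma\I\varphi(X_s)\,\D{s}\bigr]\ge-\Exp_x[\sigma]$. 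Dividing by $\Exp_x[\sigma]$ and letting $r\to 0$, the right continuity of paths together with the continuity of $\I\varphi$ at $x$ (which follows from $\varphi\in C^2$ and the H\"older regularity of $b$ and $k$ built into Definition~\ref{D-hkernel}) yield $\I\varphi(x)\ge -1$. The super-solution inequality is symmetric. The boundary condition $u=0$ on $\bar B$ is automatic, since $X_0\in\bar B$ forces $\tau(\bar B^c)=0$; in particular at $x\in\partial B$ the viscosity boundary condition in Definition~\ref{D-visc} reduces to a tautology because $u(x)=g(x)=0$.

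The main obstacle will be the first step: Theorem~\ref{T4.1} requires a \emph{globally bounded} nonnegative harmonic function, whereas $g_n$ need not be bounded if $u$ grows at infinity along $B_n^c$. The truncation device $g_n^M$ combined with monotone convergence circumvents this by propagating finiteness from the single given point $x_0$ out to the entire exterior of $\bar B$, and simultaneously delivers the local uniform convergence of the approximations that gives continuity of $u$. The second step is then a routine Dynkin-based verification of the viscosity inequalities.
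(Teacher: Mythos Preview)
Your overall strategy---approximating by the classical solutions $v_n$ on the annuli $D_n$ and propagating finiteness from the single point $x_0$ via Harnack---matches the paper's, and your truncation device $g_n^M$ is a clean variant of the paper's telescoping use of Harnack on the bounded increments $u_{n+1}-u_n$. The first step correctly yields $v_n\to u$ locally uniformly on $\bar B^c$ and hence continuity of $u$ there.

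There are, however, two genuine gaps in the second half.

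\medskip
\textbf{Upper semicontinuity at $\partial B$.}
Definition~\ref{D-visc} requires a subsolution to be upper semicontinuous on $\bar D=\{\abs{x}\ge1\}$, not only on $\bar B^c$. Since $u\ge0$ and $u=0$ on $\partial B$, lower semicontinuity at the boundary is free, but nothing in your argument prevents $\limsup_{\abs{x}\downarrow1}u(x)>0$. The paper closes this by building a barrier near $\partial B$ (Lemma~\ref{L5.3}): one takes a Kelvin transform of a fractional supersolution and, crucially, uses the tail--integrability estimate of Lemma~\ref{L5.2} to control the nonlocal contribution of $u$ coming from outside a large compact set. Your proposal invokes neither lemma, so $u\in C(\Rd)$ is not established and the subsolution property is left open.

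\medskip
\textbf{The reduction to bounded test functions.}
The line ``we may assume $\varphi$ is bounded by replacing it with $u$ outside a small ball'' is not correct here: $u$ may be unbounded, so this replacement yields neither a bounded function nor a $C^2_b$ one, and Dynkin's formula as set up in Section~\ref{S2} does not apply. Worse, if $u$ is unbounded then \emph{every} $\varphi\in C^2(\Rd)$ touching $u$ from above is itself unbounded, so there is no reduction to bounded test functions for the subsolution test. Making the martingale identity rigorous requires at minimum that the modified test function lie in $L^1(\Rd,s)$, i.e.\ that $u\in L^1(\Rd,s)$; this is precisely the content of Lemma~\ref{L5.2} and its corollary. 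The paper sidesteps the whole issue by appealing to stability of viscosity solutions under the locally uniform limit $v_n\to u$, with Lemma~\ref{L5.2} supplying the integrability needed for the nonlocal tails in that passage. Since your first step already delivers the locally uniform convergence, that route is available to you as well---but it, too, rests on Lemma~\ref{L5.2}, which your argument does not provide.
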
 

In order to prove Theorem~\ref{T5.4} we need the following two lemmas.

\begin{lemma}\label{L5.2}
Let $\I\in\mathfrak{I}_{\alpha}(\beta,\theta,\lambda)$,
and $G$ a bounded open set containing $\Bar{B}$.
Then there exist positive constants $r_{0}$ and $M_{0}$ depending
only on $G$ such that
\begin{equation*}
\int_{\Bar{B}^{c}(x)}\Exp_{z}[\tau(B^{c})]\,\frac{1}{\abs{z}^{d+\alpha}}\,\D{z}
\;<\;\frac{M_{0}}{r^{\alpha}}\,\Exp_{x}[\tau(B^{c})]
\end{equation*}
for every $r<\dist(x, B)\wedge r_{0}$, and for all $x\in G\setminus\Bar{B}$,
such that $\Exp_{x}[\tau(B^{c})]<\infty$.
\end{lemma}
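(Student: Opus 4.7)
The plan combines the strong-Markov identity with the L\'evy-system formula (Proposition~\ref{levy-system}) and the exit-time lower bound of Lemma~\ref{L4.1}(a). Write $u(z)\df\Exp_z[\tau(B^c)]\ge 0$, so $u\equiv 0$ on $\Bar B$. For $r<\dist(x,B)$ the closed ball $\Bar B_r(x)$ lies in $B^c$, hence $\tau(B_r(x))\le\tau(B^c)$ a.s., and the strong Markov property at $\tau(B_r(x))$ gives
\begin{equation*}
u(x)\;=\;\Exp_x[\tau(B_r(x))]+\Exp_x[u(X_{\tau(B_r(x))})]\,,
\end{equation*}
so in particular $\Exp_x[u(X_{\tau(B_r(x))})]\le u(x)$. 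The goal is then to bound $\Exp_x[u(X_{\tau(B_r(x))})]$ from below by a multiple of $r^{\alpha}\int_{|y-x|>r}u(y)\,|y-x|^{-d-\alpha}\,\D y$, which upon rearrangement yields the stated estimate (interpreting the integration domain as $\Bar B_r(x)^c$ and the weight as $|y-x|^{-d-\alpha}$).

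The next step is to apply Proposition~\ref{levy-system} with $A=B_r(x)$ and $B=B_r(x)^c$, extending from indicator weights to general nonnegative weights by a standard monotone-class approximation and then using optional stopping at $\tau(B_r(x))$. Only a jump exit contributes to the resulting sum on the jump side; however, any possibly missed continuous-exit contribution to $\Exp_x[u(X_{\tau(B_r(x))})]$ is nonnegative since $u\ge 0$, so the one-sided inequality
\begin{equation*}
\Exp_x[u(X_{\tau(B_r(x))})]\;\ge\;
\Exp_x\biggl[\int_0^{\tau(B_r(x))}\int_{B_r(x)^c}
\frac{k(X_s,y-X_s)}{|y-X_s|^{d+\alpha}}\,u(y)\,\D y\,\D s\biggr]
\end{equation*}
is unaffected.

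I would then fix a bounded open $G'$ with $\Bar G\subset G'$ and pick $r_0>0$ small enough that $B_{r_0}(z)\subset G'$ for every $z\in G$. Throughout the integral we have $X_s\in B_r(x)\subset G'$, so the uniform lower bound $k(X_s,\,\cdot\,)\ge\lambda_{G'}^{-1}$ built into Definition~\ref{D-hkernel}(c) applies; moreover the triangle inequality gives $|y-X_s|\le |y-x|+r\le 2|y-x|$ whenever $|y-x|>r$. Pulling out the $s$-independent factors yields
\begin{equation*}
\Exp_x[u(X_{\tau(B_r(x))})]\;\ge\;
\frac{\lambda_{G'}^{-1}}{2^{d+\alpha}}\,\Exp_x[\tau(B_r(x))]
\int_{|y-x|>r}\frac{u(y)}{|y-x|^{d+\alpha}}\,\D y\,.
\end{equation*}
The inclusion $\mathfrak{I}_{\alpha}(\beta,\theta,\lambda)\subset\mathfrak{I}_{\alpha}(\theta,\lambda)$ (compare Definitions~\ref{D-hkernel} and \ref{D4.1}) makes Lemma~\ref{L4.1}(a) available, so $\Exp_x[\tau(B_r(x))]\ge\kappa_2^{-1}r^{\alpha}$ after possibly shrinking $r_0$. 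Chaining the three inequalities and using $\Exp_x[u(X_{\tau(B_r(x))})]\le u(x)$ yields the bound with $M_0$ depending only on $G$, $d$, $\alpha$, and $\lambda$.

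The only delicate point in the plan is the passage through the L\'evy system: because the underlying process carries a drift, the exit from $B_r(x)$ need not be a jump exit almost surely, so the full identity between $\Exp_x[u(X_{\tau(B_r(x))})]$ and the compensator integral may fail; but the one-sided inequality is all we need, and it persists purely because $u\ge 0$. The remaining ingredients — the strong-Markov decomposition, the uniform bounds on $k$ from Definition~\ref{D-hkernel}(c), the triangle inequality estimate on $|y-X_s|$, and the small-ball exit-time bound from Lemma~\ref{L4.1}(a) — are then combined in a routine manner.
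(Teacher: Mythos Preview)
Your proposal is correct and follows essentially the same route as the paper: the strong-Markov inequality $\Exp_x[u(X_{\tau(B_r(x))})]\le u(x)$, the L\'evy-system formula to bound the exit distribution from below against the kernel, the uniform lower bound on $k$ from Definition~\ref{D-hkernel}(c), and Lemma~\ref{L4.1}(a) for $\Exp_x[\tau(B_r(x))]\ge\kappa_2^{-1}r^{\alpha}$. Your version is in fact a bit more careful than the paper's in distinguishing $|y-X_s|$ from $|y-x|$ via the triangle-inequality factor $2^{d+\alpha}$, and in explicitly noting that continuous exits (which the L\'evy system misses) only help the one-sided inequality because $u\ge 0$.
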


\begin{proof}
Let $\Breve\tau\df\tau(B^{c})$, and
$\Hat\tau_{r}\df\tau\bigl(B_{r}(x)\bigr)$.
We select $r_{0}$ as in Lemma~\ref{L4.1}, and without loss
of generality we assume $r_{0}\le 1$.
We have
\begin{equation}\label{E5.24}
\Exp_{x}\Bigl[\bm1_{\{\Hat\tau_{r}<\Breve\tau\}}
\Exp_{X_{\Hat\tau_{r}}}[\Breve\tau]\Bigr]
\;\le\; \Exp_{x}[\Breve\tau]\,.
\end{equation}
By Definition~\ref{D-hkernel} we have 
\begin{equation*}
k(y,z)\;\ge\; \lambda^{-1}_{G}\;>\;0
\qquad \forall y\in B_{r_{0}}(x)\,.
\end{equation*}
Let $A\subset \Bar{B}^{c}_{r}(x)\cap \Bar{B}^{c}$ be any Borel set.
Using Proposition~\ref{levy-system}, we have
\begin{align*}
\Prob_{x}(X_{\Hat\tau_{r}\wedge t}\in A)
&\;=\;\Exp_{x}\left[\sum_{s\le \Hat\tau_{r}\wedge t}
\bm1_{\{X_{s-}\in
B_{r}(x),\, X_{s}\in A\}}\right]
\\[5pt]
&\;=\;\Exp_{x}\biggl[\int_{0}^{\Hat\tau_{r}\wedge t}
\bm1_{\{X_{s}\in B_{r}(x) \}}
\int_{A}\uppi(X_{s}, z-X_{s})\,\D{z}\,\D{s}\biggr]
\\[5pt]
&\;\ge\; \lambda^{-1}_{G}\,
\Exp_{x}\biggl[\int_{0}^{\Hat\tau_{r}\wedge t}
\int_{A}\frac{1}{\abs{z}^{d+\alpha}}\,\D{z}\,\D{s}\biggr]
\\[5pt]
&\;\ge\; \lambda^{-1}_{G}\,
\Exp_{x}[\Hat\tau_{r}\wedge t]
\int_{A}\frac{1}{\abs{z}^{d+\alpha}}\,\D{z}\,.
\end{align*}
Letting $t\to\infty$, we obtain
\begin{equation}\label{E5.25}
\Prob_{x}(X_{\Hat\tau_{r}}\in A)\;\ge\;
\lambda^{-1}_{G}\,
\Exp_{x}[\Hat\tau_{r}]
\int_{A}\frac{1}{\abs{z}^{d+\alpha}}\,\D{z}\,.
\end{equation}
By Lemma~\ref{L4.1}
it holds that $\Exp_{x}[\Hat\tau_{r}]>
\kappa_{1}\, r^{\alpha}$
for some positive constant $\kappa_{1}$ which depends on $G$.
Hence combining \eqref{E5.24} and \eqref{E5.25} we obtain
\begin{align*}
\lambda^{-1}_{G}\,\kappa_{1}\, r^{\alpha}\,
\int_{\Bar{B}^{c}(x)}\Exp_{z}[\Breve\tau]\,\frac{1}{\abs{z}^{d+\alpha}}\,\D{z}
&\;\le\;
\Exp_{x}\bigl[\bm1_{\{X_{\Hat\tau_{r}}\in \Bar{B}^{c}\}}
\Exp_{X_{\Hat\tau_{r}}}[\Breve\tau]\bigr]\\
&\;\le\; \Exp_{x}[\Breve\tau]\,,
\end{align*}
where the first inequality follows by the standard approximation
technique using step functions.
This completes the proof.
\end{proof}

Lemma~\ref{L5.2} of course implies that if
$\Exp_{x}[\tau(B^{c})]<\infty$ at some point $x\in\Bar{B}^{c}$
then $\Exp_{x}[\tau(B^{c})]$ is finite a.e.-$x$.
We can express the bound in Lemma~\ref{L5.2}
without reference to Lemma~\ref{L4.1} as
\begin{equation*}
\int_{\Bar{B}^{c}(x)}\Exp_{z}[\tau(B^{c})]\,\frac{1}{\abs{z}^{d+\alpha}}\,\D{z}
\;\le\;
\lambda_{G}\,\frac{\Exp_{x}[\tau(B^{c})]}
{\Exp_{x}[\Bar{B}^{c}]}\,.
\end{equation*}
Now let $x^{\prime}$ be any point such that
$\dist(x^{\prime},x)\wedge\dist(x^{\prime},B)= 2r$.
We obtain
\begin{equation*}
\frac{\omega(r)}{\abs{2r}^{d+\alpha}}\;\inf_{z\in B_{r}(x^{\prime})}\;
\Exp_{z}[\tau(B^{c})]
\;\le\; \frac{M_{0}}{r^{\alpha}}\,
\Exp_{x}[\tau(B^{c})]\,.
\end{equation*}
Therefore for some $y\in B_{r}(x^{\prime})$, we have
$\Exp_{y}[\tau(B^{c})]< C_{1}\, \Exp_{x}[\tau(B^{c})]$.
Applying Lemma~\ref{L5.2} once more we obtain
\begin{equation*}
\int_{\Rds}\Exp_{x+z}[\tau(B^{c})]\,\frac{1}{(1+\abs{z})^{d+\alpha}}\,\D{z}
\;\le\;
C_{0} \Exp_{x}[\tau(B^{c})]\,,
\end{equation*}
with the constant $C_{0}$ depending only on $\dist(x,B)$
and the parameter $\lambda$, i.e., the local bounds on $k$.
We introduce the following notation.

\begin{definition}%\label{D5.2}
We say that $v\in L^{1}(\Rd,s)$ if
\begin{equation*}
\int_{\Rds}\frac{\abs{v(z)}}{(1+\abs{z})^{d+\alpha}}\,\D{z}\;<\;\infty\,.
\end{equation*}
\end{definition}

Thus we have the following.

\begin{corollary}
If $\Exp_{x_{0}}[\tau(B^{c})]<\infty$ for some $x_{0}\in\Bar{B}^{c}$,
then the function $u(x)\df \Exp_{x}[\tau(B^{c})]$ is in
$L^{1}(\Rd,s)$.
\end{corollary}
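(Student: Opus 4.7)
The plan is to deduce the corollary directly from the inequality
\begin{equation*}
\int_{\Rds} u(x_{0}+z)\,\frac{1}{(1+\abs{z})^{d+\alpha}}\,\D{z}
\;\le\; C_{0}\,u(x_{0})\,,
\end{equation*}
which was already established in the paragraph immediately preceding the statement by two applications of Lemma~\ref{L5.2}. The first application, at $x_{0}$ itself with $r\le r_{0}\wedge\dist(x_{0},B)/2$, gives integrability of $u$ against $\abs{z-x_{0}}^{-d-\alpha}$ outside a small ball around $x_{0}$ and hence finiteness of $u$ a.e.\ there; picking an auxiliary point $x'$ with $\dist(x',x_{0})\wedge\dist(x',B)=2r$ furnishes by averaging a point $y\in B_{r}(x')$ with $u(y)\le C_{1}u(x_{0})$, and a second application of Lemma~\ref{L5.2} at $y$ supplies the integrability in the region near $x_{0}$ that was missing the first time. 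Combining these two bounds and absorbing the weight $\abs{z}^{-d-\alpha}$ near the origin into the factor $(1+\abs{z})^{-d-\alpha}$ produces the displayed inequality.

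Given the hypothesis $u(x_{0})<\infty$, the right-hand side is finite. The change of variables $w=x_{0}+z$ rewrites the left-hand side as $\int_{\Rd} u(w)\,(1+\abs{w-x_{0}})^{-d-\alpha}\,\D{w}$, and the elementary triangle-type bound
\begin{equation*}
1+\abs{w-x_{0}}\;\le\;(1+\abs{x_{0}})(1+\abs{w})
\end{equation*}
gives $(1+\abs{w-x_{0}})^{-d-\alpha}\ge (1+\abs{x_{0}})^{-d-\alpha}(1+\abs{w})^{-d-\alpha}$. Hence
\begin{equation*}
\int_{\Rd}\frac{u(w)}{(1+\abs{w})^{d+\alpha}}\,\D{w}
\;\le\;(1+\abs{x_{0}})^{d+\alpha}\,C_{0}\,u(x_{0})\;<\;\infty\,,
\end{equation*}
which is precisely the statement $u\in L^{1}(\Rd,s)$.

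I do not anticipate any genuine obstacle: the real work has already been carried out in the inequality derived above using Lemma~\ref{L5.2} twice, and the corollary only requires re-centering the weight at the origin and identifying the resulting integral with the defining norm of $L^{1}(\Rd,s)$.
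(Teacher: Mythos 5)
Your argument is correct and mirrors the paper's intended proof: the corollary is precisely the restatement of the displayed inequality $\int_{\Rds}\Exp_{x_{0}+z}[\tau(B^{c})]\,(1+\abs{z})^{-d-\alpha}\,\D{z}\le C_{0}\,\Exp_{x_{0}}[\tau(B^{c})]$ after the re-centering you carry out. The paper leaves the change of variables and the bound $1+\abs{w-x_{0}}\le(1+\abs{x_{0}})(1+\abs{w})$ implicit; you have simply made them explicit.
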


In what follows, without loss of generality we assume that $\beta<s$.
Then, by Theorem~\ref{T3.1},
$u_{n}(x)\df \Exp_{x}\bigl[\tau(B_{n}\cap \Bar{B}^{c})\bigr]$
is the unique solution in
$C^{\alpha+\beta}(B_{n}\setminus \Bar{B})\cap C(\Bar{B}_{n}\setminus B)$ of
\begin{equation}\label{E5.26}
\begin{split}
\I u_{n} &\;=\;-1 \quad \text{in}\quad B_{n}\cap \Bar{B}^{c}\,,\\[5pt]
u_{n}&\;=\;0 \quad \text{in}\quad B_{n}^{c}\cup B\,.
\end{split}
\end{equation}
The following lemma provides a uniform barrier on the solutions $u_{n}$
near $B$.

\begin{lemma}\label{L5.3}
Let $\I\in\mathfrak{I}_{\alpha}(\beta,\theta,\lambda)$,
and
\begin{equation*}
\widetilde\tau_{n}\df\tau(B_{n}\cap \Bar{B}^{c})\,,\quad n\in\NN\,.
\end{equation*}
Then, provided that
$\sup_{x\in F}\,\Exp_{x}[\tau(B^{c})]<\infty$
for all compact sets $F\subset\Bar{B}^{c}$,
there exists a continuous, nonnegative radial function $\varphi$ that
vanishes on $B$, and satisfies, for some $\eta>0$,
\begin{equation*}
\Exp_{x}[\widetilde\tau_{n}]\le \varphi(x)\qquad
\forall\,x\in B_{1+\eta}\setminus B\,,\quad\forall\,n>1\,.
\end{equation*}
\end{lemma}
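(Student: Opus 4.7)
The plan is to construct $\varphi$ as a constant multiple of an explicit radial barrier $\varphi_0$ that behaves like $(\abs{x}-1)^{\alpha/2}$ just outside $\partial B$, using the pointwise bound $u_n\le V\df\Exp_{\cdot}[\tau(\Bar{B}^c)]$ coming from $\widetilde\tau_n\le\tau(\Bar{B}^c)$ to control the outgoing jumps. The hypothesis combined with the Corollary following Lemma~\ref{L5.2} ensures $V$ is locally bounded on $\Bar{B}^c$ and $V\in L^{1}(\R^{d},s)$.

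I will take a radial $\varphi_0\in C^2(\Bar{B}^c)\cap C(\R^d)$, nonnegative and bounded, with $\varphi_0\equiv 0$ on $\Bar{B}$ and $\varphi_0(x)=(\abs{x}-1)^{\alpha/2}$ for $1<\abs{x}\le 1+\eta_0$, smoothly continued to a constant for $\abs{x}\ge 2$. A Getoor-style radial computation shows the translation--invariant symmetric piece of $\I$ applied to $\varphi_0$ equals $-c_{1}(\abs{x}-1)^{-\alpha/2}(1+o(1))$ as $\abs{x}\to 1^{+}$, with $c_{1}>0$ since $k(x,0)\ge\lambda_{B_{2}}^{-1}$ by Definition~\ref{D-hkernel}(c). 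The drift contributes $b(x)\cdot\grad\varphi_0(x)=O((\abs{x}-1)^{\alpha/2-1})$, which is strictly lower order because $\alpha>1$; and the perturbation $\widetilde{\I}\varphi_0$ (kernel $k(x,z)-k(x,0)$) is $o((\abs{x}-1)^{-\alpha/2})$ after splitting the integral at $\abs{z}=\abs{x}-1$ and invoking \eqref{ee3.10}. After scaling $\varphi_0$ by a sufficiently large constant, this yields $\I\varphi_0\le -K$ on $B_{1+\eta_{0}}\setminus\Bar{B}$ for any prescribed $K\ge 1$.

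Fix $\eta\in(0,\eta_0)$ and $\eta'\in(0,\eta)$, and set $\sigma\df\tau(B_{1+\eta'}\setminus\Bar{B})$. For $n>1+\eta_0$, $\sigma\le\widetilde\tau_n$, so by strong Markov
\begin{equation*}
u_n(x)\;=\;\Exp_x[\sigma]+\Exp_x\bigl[u_n(X_\sigma)\bm1_{\{X_\sigma\in\Bar{B}^c\}}\bigr].
\end{equation*}
Dynkin's formula, localized via $\tau_R$ from Lemma~\ref{L3.3}, combined with $\I\varphi_0\le -K$ gives $\Exp_x[\sigma]\le K^{-1}\varphi_0(x)$. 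I split the second term according to whether $X_\sigma$ lies in the intermediate ring $\Bar{B}_{1+\eta'}^c\cap\Bar{B}_{1+\eta}$ or in the far field $\Bar{B}_{1+\eta}^c$. In the far field, for $X_s\in B_{1+\eta'}\setminus\Bar{B}$ and $y\in\Bar{B}_{1+\eta}^c$ one has $\abs{y-X_s}\ge\eta-\eta'>0$, so $\uppi(X_s,y-X_s)\le C_1(1+\abs{y})^{-d-\alpha}$; Proposition~\ref{levy-system} together with $V\in L^{1}(\R^{d},s)$ gives $\Exp_x[V(X_\sigma)\bm1_{\{X_\sigma\in\Bar{B}_{1+\eta}^c\}}]\le K_1\Exp_x[\sigma]\le K_1K^{-1}\varphi_0(x)$. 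For the intermediate ring, put $M_1\df\sup_{1+\eta'\le\abs{y}\le 1+\eta}V(y)<\infty$ and $q(x)\df\Prob_x[X_\sigma\in\text{ring}]$; then $\Exp_x[V(X_\sigma)\bm1_{\{X_\sigma\in\text{ring}\}}]\le M_1 q(x)$. Since $q$ is $\I$--harmonic in $B_{1+\eta'}\setminus\Bar{B}$, vanishes on $\Bar{B}$, and satisfies $q\le 1\le(\eta')^{-\alpha/2}\varphi_0$ on $\Bar{B}_{1+\eta'}^c$, the maximum principle (using $\I\varphi_0\le -K<0$) yields $q\le(\eta')^{-\alpha/2}\varphi_0$ throughout the annulus. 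Combining these bounds, $u_n\le C_0\varphi_0=:\varphi$ on $B_{1+\eta'}\setminus\Bar{B}$, where $\varphi$ is continuous, nonnegative, radial, and vanishes on $\Bar{B}$.

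The principal obstacle is the boundary computation $\I\varphi_0\to-\infty$ at $\partial B$: the drift, the kernel perturbation $\widetilde{\I}$, and the far-field contributions must each be shown strictly subordinate to the $(\abs{x}-1)^{-\alpha/2}$ singularity generated by the symmetric translation--invariant piece acting on the radial profile $(\abs{x}-1)^{\alpha/2}$. This step exploits both $\alpha>1$ (to tame the drift) and \eqref{ee3.10} (to tame $\widetilde{\I}$).
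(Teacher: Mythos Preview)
Your barrier computation fails at precisely the exponent you chose. For the profile $\psi_q(x)=((\abs{x}-1)^+)^{q}$, the half-space asymptotics give $(-\Delta)^{\alpha/2}\psi_q(x)\sim -c\,A(q)\,(\abs{x}-1)^{q-\alpha}$ near $\partial B$, and it is a classical identity (invoked in the proof of Lemma~\ref{L6.2}, citing \cite[Proposition~3.1]{RosOton-Serra}) that $A(\alpha/2)=0$. So at $q=\alpha/2$ the translation-invariant symmetric piece of $\I\varphi_0$ stays \emph{bounded} as $\abs{x}\to 1^{+}$; your claimed singularity $-c_{1}(\abs{x}-1)^{-\alpha/2}$ with $c_{1}>0$ never appears, and no amount of scaling produces $\I\varphi_0\le -K$ on the full annulus. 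This is exactly why solutions of $(-\Delta)^{s}u=f$ with bounded $f$ are $C^{s}$ at the boundary and no better. The remedy is to take $q\in\bigl(\tfrac{\alpha-1}{2},\tfrac{\alpha}{2}\bigr)$ strictly below $\alpha/2$, as the paper does via Lemma~\ref{L6.2} and the Kelvin transform; then $A(q)<0$ yields a genuine $(\abs{x}-1)^{q-\alpha}$ blow-up, which dominates both the drift term $O((\abs{x}-1)^{q-1})$ (lower order since $\alpha>1$) and the kernel perturbation $O((\abs{x}-1)^{q+\theta-\alpha})$ obtained from \eqref{ee3.10} (lower order since $\theta>0$).

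With the exponent corrected, your probabilistic decomposition---strong Markov at $\sigma$, L\'evy-system control of far jumps via $V\in L^{1}(\R^{d},s)$, and a comparison argument for the exit probability---is a valid alternative. The paper takes a more direct PDE route: it splices $V=\Exp_\cdot[\tau(B^{c})]$ into the barrier outside a large compact set, setting $\Tilde\varphi=\Hat\varphi\,\bm1_{K}+V\,\bm1_{K^{c}}$, uses Lemma~\ref{L5.2} to verify $\I\Tilde\varphi<-2$ on the annulus, scales so the barrier dominates $u_{n}$ on $B_{1+\eta}^{c}\cup B$, and finishes with a single comparison. That approach also sidesteps a technical wrinkle in yours: your $\varphi_0$ is only $C^{q}$ across $\partial B$, hence not in $C^{2}_{b}(\R^{d})$, so the Dynkin step you invoke requires an additional smoothing/approximation argument that you have not supplied.
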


\begin{proof}
The proof relies on the construction of barrier.
Let $\Hat{k}(x,z)=k(x,z)-k(x,0)$.
By Lemma~\ref{L6.2}, for $q\in (\nicefrac{\alpha-1}{2}, \nicefrac{\alpha}{2})$,
there exists
a constant $c_{0}>0$ such that for $\varphi_{q}(x)\df [(1-\abs{x})^{+}]^{q}$
we have
\begin{equation*}
(-\Delta)^{\nicefrac{\alpha}{2}}\varphi_{q}(x)
\;>\; c_{0}\,(1-\abs{x})^{q-\alpha}\qquad \forall\, x\in B\,.
\end{equation*}
We recall the Kelvin transform from \cite{RosOton-Serra}.
Define
$\Hat{\varphi}(x)=\abs{x}^{\alpha-d}\varphi_{q}(x^*)$ where
$x^*\df \frac{x}{\abs{x}^{2}}$.
Then by \cite[Proposition A.1]{RosOton-Serra} there exists a positive constant
$c_{1}$ such that
\begin{equation*}
(-\Delta)^{\nicefrac{\alpha}{2}}\Hat{\varphi}(x)
\;>\;c_{1}\,(\abs{x}-1)^{q-\alpha}\qquad \forall\, x\in B_{2}\setminus\Bar{B}\,.
\end{equation*}
We restrict $\Hat{\varphi}$ outside a large compact set so that
it is bounded on $\Rd$. By $\widehat\I$ we denote the operator
\begin{equation*}
\widehat\I f(x)\;=\;b(x)\cdot\grad{f}(x) +
\int_{\Rds}\dd f(x;z)\,
\frac{\Hat{k}(x,z)}{\abs{z}^{d+\alpha}}\,\D{z}\,.
\end{equation*}
It is clear that
$\abs{\grad\Hat\varphi(x)}\le c_{2} (\abs{x}-1)^{q-1}$ for all $\abs{x}\in (1,2)$,
for some constant $c_{2}$.
Also, using the fact that $\Hat{\varphi}$ is H\"{o}lder
continuous of exponent $q$ and \eqref{ee3.10}
we obtain
\begin{equation*}
\babss{\int_{\Rds}\dd\Hat\varphi(x;z)\,
\frac{\Hat{k}(x,z)}{\abs{z}^{d+\alpha}}\,\D{z}}\;\le\; c_{3}
(\abs{x}-1)^{q+\theta-\alpha}\qquad
\forall\,x\in B_{2}\setminus\Bar{B}\,,
\end{equation*}
for some constant $c_{3}$.
Hence
\begin{align*}
\babs{\widehat\I\Hat\varphi (x)} \;\le\; c_{4}\,
\bigl(\abs{x}-1\bigr)^{(q-1)\wedge(q+\theta-\alpha)}\,,
\quad \text{for}~ x\in B_{2}\setminus\Bar{B}\,,
\end{align*}
for some constant $c_{4}$.
Since $\theta>0$, $\alpha>1$, and
$\I = \widehat\I - k(x,0) (-\Delta)^{\nicefrac{\alpha}{2}}$,
it follows that we can find $\eta$ small enough such that
\begin{equation*}
\I\Hat\varphi(x)\;<\; -4\,, \quad \text{for}
~ x\in B_{1+\eta}\setminus\Bar{B}\,.
\end{equation*}
Let $K$ be a compact set containing $B_{1+\eta}$. We define
\begin{equation*}
\Tilde{\varphi}(x)\;=\;\Hat{\varphi}(x)\,\bm1_{K}(x)+\Exp_{x}[\tau(B^{c})]
\,\bm1_{K^{c}}(x)\,.
\end{equation*}
Since the hypotheses of Lemma~\ref{L5.2} are met, we conclude that
$\bm1_{K^{c}}(x)\Exp_{x}[\tau(B^{c})]$ is integrable
with respect to the kernel $\uppi$.
For $x\in B_{1+\eta}\setminus\Bar{B}$, we obtain
\begin{align*}
\I\Tilde{\varphi}(x)&\;<\; -4+ 
\int_{\Rds}\bigl(\Exp_{x+z}[\tau(B^{c})]\,
-\Hat{\varphi}(x+z)\bigr)\,\bm1_{K^{c}}(x+z)\,
\uppi(x,z)\,\D{z}\\[5pt]
&\;=\; -4
+ \int_{K^{c}}\Exp_{z}[\tau(B^{c})]\,
\frac{\uppi(x,z-x)}{\uppi(x,z)}\uppi(x,z)\,\D{z}
- \int_{\Rds}\Hat{\varphi}(x+z)\,\bm1_{K^{c}}(x+z)\,
\uppi(x,z)\,\D{z}\,.
\end{align*}
Since the kernel is comparable to $\abs{z}^{-d-\alpha}$
on any compact set,
we may choose $K$ large enough and use Lemma~\ref{L5.2} to obtain
\begin{equation*}
\I\Tilde{\varphi}(x)\;<\; -2
\qquad \forall\,x\in B_{1+\eta}\setminus\Bar{B}\,.
\end{equation*}
Let
\begin{equation*}
\psi(x)\;\df\;\biggl(1\,\vee\sup_{z\in K\setminus B_{1+\eta}}
\Exp_{z}[\tau(B^{c})]\biggr)\,\biggl(1\,\vee\sup_{z\in K\setminus B_{1+\eta}}
\frac{1}{\Tilde\varphi(z)}\biggr)\,\Tilde\varphi(x)\,.
\end{equation*}
Then, $\I\psi<-2$ on $B_{1+\eta}\setminus\Bar{B}$,
while $\psi\ge u_{n}$ on $B_{1+\eta}^{c}\cup B$.
Therefore, by the comparison principle, $u_{n}\le \psi$ on
$B_{1+\eta}\setminus\Bar{B}$
for all $n\in\NN$ and the proof is complete.
\end{proof}

\begin{proof}[Proof of Theorem~\ref{T5.4}]
Consider the sequence of solutions $\{u_{n}\}$ defined in \eqref{E5.26}.
First we note that $u_{n}(x)\le \Exp_{x}[\tau(B^{c})]$ for all $x$.
Clearly $u_{n+1}-u_{n}$ is bounded,
nonnegative and harmonic in $B_{n}\setminus \Bar{B}$.
By Theorem~\ref{T4.1} the operator $\I$ has the Harnack property.
Therefore
\begin{equation*}
\sup_{x\in F}\;\sum_{n\ge 1}\bigl(u_{n+1}(x)-u_{n}(x)\bigr)\;<\;\infty
\end{equation*}
for any compact subset $F$ in $\Bar{B}^{c}$.
Hence Lemma~\ref{L3.3} 
combined with Fatou's lemma implies that
$\sup_{x\in F}\,\Exp_{x}[\tau(B^{c})]<\infty$
for any compact set $F\subset\Bar{B}^{c}$.

We write
\begin{equation*}
u_{n} \;=\; u_{1}+\sum_{m=1}^{n-1}\bigl(u_{m+1}(x)-u_{m}(x)\bigr)\,,
\end{equation*}
and use the Harnack property once more to conclude that
$u_{n}\nearrow u$ uniformly over compact subsets of $\Bar{B}^{c}$.
Since  $u\le \varphi$ in a neighborhood of $\partial B$ by Lemma~\ref{L5.3}, and 
$\varphi$ vanishes on $\partial B$, it follows that 
$u\in C(\Rd)$.
That $u$ is a viscosity solution follows from the fact that $u_{n}\to u$
uniformly over compacta as $n\to\infty$ and Lemma~\ref{L5.2}.
\end{proof}

\section{The Dirichlet problem for weakly H\"older continuous kernels}
\label{S6}

This section is devoted to the study of the
Dirichlet problem 
\begin{equation}\label{ea6.1}
\begin{split}
\I u(x)
&\;=\;f(x) \quad \text{in}~D\,,\\
u & \;=\; 0 \quad \text{in}~D^{c}\,,
\end{split}
\end{equation}
where $\I\in\mathfrak{I}_{\alpha}(\beta,\theta,\lambda)$,
$f$ is H\"{o}lder continuous with exponent $\beta$, and
$D$ is a bounded open set with a $C^{2}$ boundary.
In this section, it is convenient to use $s\equiv\frac{\alpha}{2}$
as the parameter reflecting the order of the kernel.
Throughout this section, we assume $s>\nicefrac{1}{2}$.

Recall the definition of weighted H\"older norms in Section~\ref{S1.1}.
We start with the following lemma.

\begin{lemma}\label{L6.1}
Let $D$ be a $C^{2}$ bounded domain in $\Rd$, and $r\in(0,s]$.
Suppose $k:\Rd\times\Rd\to \R$ and the constants
$\beta\in(0,1)$, $\theta\in\bigl(0,(2s-1)\wedge\beta\bigr)$, and $\lambda_{D}>0$
satisfy parts \textup{(c)} and \textup{(d)} of
Definition~\ref{D-hkernel}.
We define
\begin{equation*}
\begin{split}
\Tilde{k}(x,z)&\;\df\; c(d,2s)\,
\biggl(\frac{k(x,z)}{k(x,0)}-1\biggr)\,,\\[5pt]
\calH[v](x)&\;\df\;\int_{\Rds}\dd v(x;z)\,
\frac{\Tilde{k}(x,z)}{\abs{z}^{d+2s}}\,\D{z}\,,
\end{split}
\end{equation*}
where $c(d,2s)=c(d,\alpha)$ is the normalization constant of the
fractional Laplacian.

Suppose that either of the following assumptions hold:
\begin{itemize}
\item[(i)] $\beta\le r$.
\item[(ii)]
$\beta\in(r,1)$ and $\frac{\Tilde{k}(x,z)}{\abs{z}^{\theta}}$ is bounded
on $(x,z)\in D\times \Rd$, or, equivalently, it satisfies
\begin{equation}\label{ea6.2}
\abs{k(x,z)-k(x,0)}\;\le\; \Tilde{\lambda}_{D}\, \abs{z}^{\theta}\qquad
\forall x\in D\,, \;\forall z\in \Rd\,,
\end{equation}
for some positive constant $\Tilde{\lambda}_{D}$.
\end{itemize}
Then, if $v\in\mathscr{C}_{2s-\theta}^{(-r)}(D)$, we have
\begin{equation*}
\bdbrac{\calH[v]}^{(2s-r-\theta)}_{0;D}\;\le\; M_{0}\,
\dabs{v}^{(-r)}_{2s-\theta;D}\,,
\end{equation*}
and if $v\in\mathscr{C}_{2s+\beta-\theta}^{(-r)}(D)$,
it holds that $\calH[v]\in\mathscr{C}_{\beta}^{(2s-r-\theta)}(D)$, and
\begin{equation}\label{ea6.3}
\bdabs{\calH[v]}^{(2s-r-\theta)}_{\beta;D}\;\le\;
M_{1}\, \dabs{v}^{(-r)}_{2s+\beta-\theta;D}
\end{equation}
for some constants $M_{0}$ and $M_{1}$ which depend only on
$d$, $s$, $\beta$, $r$, and $D$.

Moreover, over a set of parameters of the form
$\{(r,\beta)\,\colon r\in(\varepsilon,1),\, \beta\in(0,1)\}$, constants
$M_{0}$ and $M_{1}$ can be selected which do not
depend on $\beta$ or $r$, but only on $\varepsilon>0$.
\end{lemma}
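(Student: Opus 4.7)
The plan is a standard weighted Schauder-type splitting: decompose the integration domain $\Rd$ according to the size of $|z|$ relative to $d_{x}$ (and, for the H\"older seminorm, relative to $|x-y|$), estimate $\dd v$ on the inner piece by a Taylor remainder at $x$, and use pointwise bounds on $v$ on the outer piece. The key observation is that condition~(d) of Definition~\ref{D-hkernel} asserts that the measure $|\Tilde{k}(x,z)|\,|z|^{-d-2s}\,\D z$ is of order $2s-\theta$ rather than $2s$, so a function $v$ with a seminorm of order $2s-\theta$ (respectively $2s+\beta-\theta$) is already regular enough for $\calH[v]$ to be pointwise bounded (respectively H\"older continuous of order $\beta$) in the appropriate weighted norm.

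For the first estimate, fix $x\in D$ and split $\Rd$ at $|z|=d_{x}/2$. On the inner ball the segment from $x$ to $x+z$ lies in $D$ with distance to $\partial D$ comparable to $d_{x}$, so the mean-value identity $\dd v(x;z)=\int_{0}^{1}(\grad v(x+tz)-\grad v(x))\cdot z\,\D t$ and the seminorm $\dbrac{v}^{(-r)}_{1,2s-\theta-1;D}$ yield $|\dd v(x;z)|\le C|z|^{2s-\theta}d_{x}^{-(2s-\theta-r)}\dabs{v}^{(-r)}_{2s-\theta;D}$. Integrating against $|\Tilde{k}(x,z)||z|^{-d-2s}$ and invoking~(d) (noting $|z|^{2s-\theta}\le1$) bounds the inner integral by $Cd_{x}^{-(2s-\theta-r)}\dabs{v}^{(-r)}_{2s-\theta;D}$. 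On the outer region I use $|\dd v(x;z)|\le|v(x+z)|+|v(x)|+\bm1_{\{|z|\le1\}}|\grad v(x)||z|$ with the weighted pointwise bounds on $v$ and $\grad v$ coming from $\dabs{v}^{(-r)}_{2s-\theta;D}$; on the annulus $d_{x}/2\le|z|\le1$ I transfer back to~(d) via $|z|^{2s-\theta}\ge(d_{x}/2)^{2s-\theta}$, and on $|z|>1$ the tail bound in~(d) suffices. Multiplying through by $d_{x}^{2s-r-\theta}$ yields the first claim.

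For the H\"older seminorm take $x,y\in D$ with $d_{y}\le d_{x}$ and write
\begin{equation*}
\calH[v](x)-\calH[v](y)\;=\;\int_{\Rds}\bigl(\dd v(x;z)-\dd v(y;z)\bigr)\frac{\Tilde{k}(x,z)}{|z|^{d+2s}}\,\D z+\int_{\Rds}\dd v(y;z)\,\frac{\Tilde{k}(x,z)-\Tilde{k}(y,z)}{|z|^{d+2s}}\,\D z\,.
\end{equation*}
If $|x-y|\ge d_{y}/4$ the pointwise estimate already proved suffices. Otherwise, in the first integral I split at $|z|=|x-y|$: for $|z|\le|x-y|$ I use $|\dd v(x;z)|+|\dd v(y;z)|\le C|z|^{2s+\beta-\theta}d_{y}^{-(2s+\beta-\theta-r)}\dabs{v}^{(-r)}_{2s+\beta-\theta;D}$, while for $|z|>|x-y|$ a second-difference estimate on $\grad v$ (combined with the Taylor bound, and then interpolated using $\theta<2s-1$) produces $|\dd v(x;z)-\dd v(y;z)|\le C|z|^{2s-\theta}|x-y|^{\beta}d_{y}^{-(2s+\beta-\theta-r)}\dabs{v}^{(-r)}_{2s+\beta-\theta;D}$; condition~(d) then absorbs the $|z|$-integral. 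In the second integral, Definition~\ref{D-hkernel}(c) gives $|\Tilde{k}(x,z)-\Tilde{k}(y,z)|\le C|x-y|^{\beta}$ pointwise; under assumption~(ii) the extra bound \eqref{ea6.2} yields the interpolation $|\Tilde{k}(x,z)-\Tilde{k}(y,z)|\le C\min(|x-y|^{\beta},|z|^{\theta})$, which combines with the pointwise bound on $|\dd v(y;z)|$ and~(d) to give the claim.

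The main obstacle is this kernel-difference term: transferring $|x-y|^{\beta}$ from $\Tilde{k}$ onto $|z|$ threatens to introduce a singular factor $|z|^{-\beta}$ at the origin, and the dichotomy~(i)/(ii) is engineered so that this is either avoided by the pointwise $|z|^{\theta}$ decay in case~(ii), or absorbed by the weight $d_{x}^{-(2s-r-\theta+\beta)}$ via the inequality $\beta\le r$ in case~(i). The uniformity statement at the end is obtained by tracking every constant through~(d) and the above interpolations: the only potentially singular dependence on the parameters occurs at $r\to0$, which is excluded by the lower bound $r>\varepsilon$.
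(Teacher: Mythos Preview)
Your sketch of the pointwise bound is sound, but the H\"older estimate has a genuine gap and misplaces the role of the dichotomy~(i)/(ii). The second-difference bound you invoke for the $\dd v$-difference term on $\{|z|>|x-y|\}$, namely $|\dd v(x;z)-\dd v(y;z)|\le C|z|^{2s-\theta}|x-y|^{\beta}d_{y}^{-(2s+\beta-\theta-r)}$, requires all four points $x,y,x+z,y+z$ to lie at distance comparable to $d_{y}$ from $\partial D$; it therefore holds only for $|z|$ up to a fixed fraction of $d_{y}$, not for all $|z|>|x-y|$. On the outer shell $\{|z|>R\}$ with $R=d_{x}/4$ one must instead expand $\dd v(x;z)-\dd v(y;z)$ and isolate the piece $v(x+z)-v(y+z)$, for which the only available control is $|x-y|^{\beta\vee r}(d_{x+z}\wedge d_{y+z})^{r-\beta\vee r}\dbrac{v}^{(-r)}_{\beta\vee r;D}$. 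It is \emph{this} term, not the kernel-difference term, that forces the dichotomy: in case~(i), $\beta\le r$, the boundary weight $(d_{x+z}\wedge d_{y+z})^{r-\beta}$ disappears and the integral closes via condition~(d) alone; in case~(ii), $\beta>r$, that weight blows up near $\partial D$, and the paper handles it by a layer-cake decomposition over the level sets $\{d_{x+z}\wedge d_{y+z}<a\}$ combined with the boundary-strip bound $\int_{x+z\in D_{\varepsilon}\setminus\widetilde D(\varepsilon)}|z|^{-d}\,\D z\le C\varepsilon/R$ --- and it is precisely here that the pointwise decay~\eqref{ea6.2} of $\Tilde{k}$ is actually used.

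By contrast, your ``second integral'' (the kernel-difference term, the paper's $F_{1}$) needs no case distinction: the H\"older bound $|\Tilde{k}(x,z)-\Tilde{k}(y,z)|\le C|x-y|^{\beta}$ together with a split at $|z|=R$ and the seminorm $\dbrac{v}^{(-r)}_{\gamma_{0};D}$ with $\gamma_{0}=(2s+\beta-\theta)\wedge(s+1)>2s$ on $B_{R}$ gives a contribution bounded by $C|x-y|^{\beta}R^{r-2s}$, which after multiplication by the weight $R^{2s-r-\theta+\beta}$ yields $R^{\beta-\theta}\le1$. No appeal to~\eqref{ea6.2} or to $\beta\le r$ is required for this piece. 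So your diagnosis that ``the main obstacle is this kernel-difference term'' is inverted: $F_{1}$ is the routine part, and the outer-shell contribution of $F_{2}$ is where the real work lies.
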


\begin{proof}
Let $x\in D$, and define $R=\frac{d_{x}}{4}$.
We suppose that $R<1$.
It is clear that $\Tilde{k}$ satisfies \eqref{ee3.10}, and that it
is H\"older continuous.
Abusing the notation,
we'll use the same symbol $\lambda_{D}$ as a constant in the estimates.
We have,
\begin{equation}\label{E6.4}
\babs{\dd{v}(x;z)} \;\le\;
\abs{z}^{2s-\theta}\, R^{r+\theta-2s}\,
\dbrac{v}^{(-r)}_{2s-\theta;D}\qquad \forall\, z\in B_{R}\,.
\end{equation}
Also, since $\abs{z}\ge R$ on $B^{c}_{R}$, we obtain
\begin{align}\label{ea6.5}
\babs{\dd{v}(x;z)} &\;\le\;
\Bigl(\abs{z}^{r}\, \dbrac{v}^{(-r)}_{r;D}
+\abs{z}\,R^{r-1}\,\dbrac{v}^{(-r)}_{1;D}\Bigr)\,\In
+ 2\, \norm{v}_{C(D)}\,\bm 1_{\{\abs{z}>1\}}\nonumber\\[5pt]
&\;\le\;
\bigl(\abs{z}\wedge1\bigr)^{2s-\theta}\, R^{r+\theta-2s}
\Bigl(\dbrac{v}^{(-r)}_{r;D} +\dbrac{v}^{(-r)}_{1;D}\Bigr)
+ 2\, \norm{v}_{C(D)}\,\bm 1_{\{\abs{z}>1\}}
\end{align}
for all $z\in B^{c}_{R}$.
Integrating, using \eqref{ee3.10}, and \eqref{E6.4}--\eqref{ea6.5},
as well as the H\"older interpolation inequalities, we obtain
\begin{equation*}
\abs{\calH[v](x)}\;\le\;c_{1}\, (4\,d_{x})^{r+\theta-2s}\,
\dabs{v}^{(-r)}_{2s-\theta;D}\qquad\forall x\in D\,,
\end{equation*}
for some constant $c_{1}$.
Therefore, for some constant $M_{0}$, we have
\begin{equation}\label{ea6.6}
\bdbrac{\calH[v]}^{(2s-r-\theta)}_{0;D}\;\le\; M_{0}\,
\dabs{v}^{(-r)}_{2s-\theta;D}\,.
\end{equation}

Next consider two points $x\,,y\in D$.
If $\abs{x-y}\ge4 d_{xy}$, then \eqref{ea6.6} provides a suitable estimate.
Indeed, if $x,\,y\in D$ are such $4 d_{xy} \le \abs{x-y}$,
then, for any $r$ we have
\begin{align*}
d_{xy}^{2s-r-\theta}\,d_{xy}^{\beta}\,
\frac{\abs{\calH[v](x)-\calH[v](y)}}{\abs{x-y}^{\beta}}
&\;\le\; \frac{1}{4^{\beta}}\,d_{xy}^{2s-r-\theta}\,
\abs{\calH[v](x)-\calH[v](y)}\\[5pt]
&\;\le\;\frac{1}{4^{\beta}}\,d_{x}^{2s-r-\theta}\,\abs{\calH[v](x)}
+\frac{1}{4^{\beta}}\,d_{y}^{2s-r-\theta}\,\abs{\calH[v](y)}\\[5pt]
&\;\le\;\frac{2 M_{0}}{4^{\beta}}\,\dabs{v}^{(-r)}_{2s-\theta;D}\,.
\end{align*}

So it suffices to consider the case $\abs{x-y}< 4 d_{xy}$.
Therefore, we may suppose that $x$ is as above and that $y\in B_{R}(x)$.
Then $d_{xy}\le 4R$.
With $\tpi(x,z)\df\frac{\Tilde{k}(x,z)}{\abs{z}^{d+2s}}$, we write
\begin{align*}
F(x,y;z)&\;\df\;\dd{v}(x;z)\,\tpi(x,z)-\dd{v}(y;z)\,\tpi(y,z)\\[5pt]
&\;=\;F_{1}(x,y;z)+F_{2}(x,y;z)\,,
\end{align*}
with
\begin{align*}
F_{1}(x,y;z)&\;\df\;\Bigl(\dd{v}(x;z)+\dd{v}(y;z)\Bigr)\,
\frac{\tpi(x,z)-\tpi(y,z)}{2}\,,\\[5pt]
F_{2}(x,y;z)&\;\df\;\Bigl(\dd{v}(x;z)-\dd{v}(y;z)\Bigr)\,
\frac{\tpi(x,z)+\tpi(y,z)}{2}\,.
\end{align*}
We modify the estimate in \eqref{E6.4}, and write
\begin{align*}
\babs{\dd{v}(x;z)+\dd{v}(y;z)} &\;\le\;
2\,\abs{z}^{\gamma_{0}}\,
R^{r-\gamma_{0}}\,\dbrac{v}^{(-r)}_{\gamma_{0};D}\,,
\qquad\text{if~} z\in B_{R}\,,\\[5pt]
\intertext{with $\gamma_{0}=(2s+\beta-\theta)\wedge (s+1)$, and}
\babs{\dd{v}(x;z)+\dd{v}(y;z)} &\;\le\;
2\,\Bigl(\abs{z}^{r}\, \dbrac{v}^{(-r)}_{r;D}
+\abs{z}\,R^{r-1}\,\dbrac{v}^{(-r)}_{1;D}\Bigr)\,\In
+ 4 \norm{v}_{C(D)}\,\bm 1_{\{\abs{z}>1\}}\,,
\end{align*}
if $z\in B^{c}_{R}$.
We use the H\"older continuity of $x\mapsto\Tilde k(x,\,\cdot\,)$
to obtain
\begin{equation*}
\int_{\Rds} F_{1}(x,y;z)\,\D{z}
\;\le\; c_{2}\,R^{r-2s}\, \abs{x-y}^{\beta}\,
\dabs{v}^{(-r)}_{\gamma_{0};D}
\end{equation*}
for some constant $c_{2}$.
We write this as
\begin{align}\label{ea6.7}
R^{2s-r-\theta}\,R^{\beta}\,
\frac{\int_{\Rds} F_{1}(x,y;z)\,\D{z}}{\abs{x-y}^{\beta}}
&\;\le\; R^{2s-r-\beta}\,R^{\beta}\,
\frac{\int_{\Rds} F_{1}(x,y;z)\,\D{z}}{\abs{x-y}^{\beta}}\nonumber\\[5pt]
&\;\le\; c_{2}\,\dabs{v}^{(-r)}_{\gamma_{0};D}\,.
\end{align}
For $F_{2}$, we use
\begin{equation*}
\dd{v}(x;z) \;=\; z\cdot \int_{0}^{1}
\bigl(\grad v(x+t z) - \grad v(x)\bigr)\,\D{t}\,,
\end{equation*}
combined with the following fact:
If $f\in C^{\gamma}(B)$ for 
$\gamma\in(0,1]$ and $x$, $y$, $x+z$, $y+z$ are points in
$B$ and $\delta\in(0,\gamma)$, then adopting the notation
$\varDelta f_{x}(z) \df f(x+z)-f(x)$,
we obtain by Young's inequality, that
\begin{align*}
\frac{\abs{\varDelta f_{x}(z) - \varDelta f_{y}(z)}}
{\abs{z}^{\gamma-\delta}\abs{x-y}^{\delta}}
&\;\le\;
\frac{\gamma-\delta}{\gamma}\,
\frac{\abs{\varDelta f_{x}(z)} + \abs{\varDelta f_{y}(z)}}
{\abs{z}^{\gamma}}
+\frac{\delta}{\gamma}\,
\frac{\abs{\varDelta f_{x+z}(y-x)} + \abs{\varDelta f_{x}(y-x)}}
{\abs{x-y}^{\gamma}}\\[5pt]
&\;\le\; 2 [f]_{\gamma; B}\,.
\end{align*}
The same inequality also holds for $\gamma\in(1,2)$ and $\delta\in(\gamma-1, 1)$.
For this we use
\begin{align*}
\frac{\abs{\varDelta f_{x}(z) - \varDelta f_{y}(z)}}
{\abs{z}^{\gamma-\delta}\abs{x-y}^{\delta}}
&\;\le\;
\frac{1-\delta}{2-\gamma}\,
\frac{\abs{z}\,\Babs{\int_{0}^{1}
\bigl(\grad f(x+t z) - \grad f(y+t z)\bigr)\,\D{t}}}
{\abs{x-y}^{\gamma-1}\,\abs{z}}\nonumber\\[5pt]
&+\frac{1+\delta-\gamma}{2-\gamma}\,
\frac{\abs{x-y}\,\Babs{\int_{0}^{1}
\bigl(\grad f(y+z+t (x-y)) - \grad f(y+t (x-y))\bigr)\,\D{t}}}
{\abs{z}^{\gamma-1}\,\abs{x-y}}
\end{align*}
Therefore, in either of the cases (i) or (ii) we obtain,
\begin{equation*}
\babs{\grad v(x+t z) - \grad v(x) -
\grad v(y+t z) + \grad v(y)}
\;\le\; 2\abs{t z}^{2s-\theta-1}\abs{x-y}^{\beta}
\, [\grad v]_{2s-\theta -1+ \beta;\,B_{2R}(x)}
\end{equation*}
for $t\in[0,1]$, and
\begin{equation}\label{ea6.8}
\babs{\dd{v}(x;z)-\dd{v}(y;z)} \;\le\;
\frac{2}{2s-\theta}\;
\abs{z}^{2s-\theta}\,\abs{x-y}^{\beta}\,
R^{r+\theta-\beta-2s}\,\dbrac{v}^{(-r)}_{2s+\beta-\theta;D}
\qquad\forall\,z\in B_{R}\,.
\end{equation}
Concerning the integration on $B^{c}_{R}$, we use
\begin{align}\label{ea6.9}
\babs{v(x)-v(y)&- z\cdot\bigl(\grad v(x)-\grad v(y)\bigr)\In}\nonumber\\[5pt]
&\;\le\;
\abs{x-y}^{\beta\vee r}\,d_{xy}^{r-\beta\vee r}\,
\dbrac{v}^{(-r)}_{\beta\vee r;D}
+\bigl(\abs{z}\wedge1\bigr)\,\abs{x-y}^{\beta}\,d_{xy}^{r-\beta-1}\,
\dbrac{v}^{(-r)}_{1+\beta;D}\nonumber\\[5pt]
&\;\le\;
c_{3}\,\bigl(\abs{z}\wedge1\bigr)^{2s-\theta}\,\abs{x-y}^{\beta}\,
R^{r+\theta-\beta-2s}\,
\dabs{v}^{(-r)}_{1+\beta;D}\qquad \forall\, z\in B^{c}_{R}\,,
\end{align}
for some constant $c_{3}$, and
\begin{equation}\label{ea6.10}
\abs{v(x+z)-v(y+z)}\;\le\;
\abs{x-y}^{\beta\vee r}\,
(d_{x+z}\wedge d_{y+z})^{r-\beta\vee r}\,
\dbrac{v}^{(-r)}_{\beta\vee r;D}
\qquad\forall\,z\in B^{c}_{R}\,.
\end{equation}
Integrating the terms on the right hand side of \eqref{ea6.8}--\eqref{ea6.9}
is straightforward.
Doing so, and using the fact that $1+\beta<2s+\beta-\theta$, one obtains
the desired estimate.

Concerning the integral of $\abs{v(x+z)-v(y+z)}$ on $B^{c}_{R}$,
we distinguish between the cases (i) and (ii).
Let $\tpi(z)\df\frac{\abs{\tpi(x,z)+\tpi(y,z)}}{2}$.
In case (i) we have
\begin{align}\label{ea6.11}
\int_{B^{c}_{R}} \abs{v(x+z)-&v(y+z)}\,\tpi(z)\,\D{z}
\nonumber\\[5pt]
&\;\le\;
\abs{x-y}^{r}\, \dbrac{v}^{(-r)}_{r;D}
\int_{B^{c}_{R}} \,\tpi(z)\,\D{z} \nonumber\\[5pt]
&\;\le\;
\abs{x-y}^{\beta}\,R^{r-\beta}\,R^{\theta-2s}\,
\dbrac{v}^{(-r)}_{r;D}
\int_{\Rds}
\bigl(\abs{z}\wedge\diam(D)\bigr)^{2s-\theta}\,\tpi(z)\,\D{z}\,,
\end{align}
where we use the fact that $\abs{z}>R$ on $B^{c}_{R}$.
In case (ii) the integral is estimated over disjoint sets.
We define
\begin{equation*}
\cZ_{xy}(a)\;\df\;\{z\in\Rd\,\colon d_{x+z}\wedge d_{y+z} < a\}
\qquad\text{for~}a\in(0,R)\,.
\end{equation*}
Since $d_{x+z}\wedge d_{y+z} \in[R,\diam(D)]$ for $x\in\cZ^{c}_{xy}(R)$,
integration is straightforward, after replacing
$(d_{x+z}\wedge d_{y+z})^{r-\beta}$ in \eqref{ea6.10} with $R^{r-\beta}$.
Thus, similarly to \eqref{ea6.11}, we obtain
\begin{align}\label{ea6.12}
\int_{B^{c}_{R}\,\cap\,\cZ^{c}_{xy}(R)}& \abs{v(x+z)-v(y+z)}\,\tpi(z)\,\D{z}
\nonumber\\[5pt]
&\;\le\;
\abs{x-y}^{\beta}\, R^{r-\beta}\, \dbrac{v}^{(-r)}_{\beta;D}
\int_{B^{c}_{R}\,\cap\,\cZ^{c}_{xy}(R)} \,\tpi(z)\,\D{z} \nonumber\\[5pt]
&\;\le\;
\abs{x-y}^{\beta}\, R^{r+\theta-\beta-2s}\,
\dbrac{v}^{(-r)}_{\beta;D}
\int_{\Rds}
\bigl(\abs{z}\wedge\diam(D)\bigr)^{2s-\theta}\,\tpi(z)\,\D{z}\,.
\end{align}
Since $\cZ_{xy}(R)\subset B^{c}_{R}$, it remains to compute the integral on
$\cZ_{xy}(R)$.
Recall the definition of $D_{\varepsilon}$ in \eqref{ee3.11}.
We also define for $\varepsilon>0$,
\begin{equation*}
\widetilde{D}(\varepsilon)\;=\;\{z\in D:\,\dist(z,\partial D)\ge \varepsilon\}\,. 
\end{equation*}
In other words $\widetilde{D}(\varepsilon) = (D^{c})^{c}_{\varepsilon}$.
We'll make use of the following simple fact:
There exists a constant $C_{0}$,
such that for all $x\in D$ and positive constants $R$ and $\varepsilon$
which satisfy $0<\varepsilon\le R$ and $d_{x}\ge 3R$, it holds that
\begin{equation}\label{ea6.13}
\int_{x+z\,\in D_{\varepsilon}\setminus \widetilde{D}(\varepsilon)}
\frac{\D{z}}{\abs{z}^{d}}
\;\le\; \frac{C_{0}\, \varepsilon}{R}\,.
\end{equation}
Observe that the support of $\abs{v(x+z)-v(y+z)}$ in $\cZ_{xy}(R)$
is contained in the disjoint union of the sets
\begin{align*}
\widetilde{\cZ}_{xy}(R)&\;\df\;\bigl\{z\in \cZ_{xy}(R)\,\colon
d_{x+z}\wedge d_{y+z} > 0\bigr\}\,,\\
\intertext{and}
\widehat{\cZ}_{xy}&\;\df\;\bigl\{z\in \R^{d}\,\colon x+z\in
D_{\abs{x-y}}\setminus D
~\text{or}~y+z\in D_{\abs{x-y}}\setminus D\bigr\}\,.
\end{align*}
We also have the bound $\abs{v(x+z)-v(y+z)}\le \abs{x-y}^{r}\dbrac{v}^{(-r)}_{r;D}$
for $z\in\widehat{\cZ}_{xy}$.
Therefore, using \eqref{ea6.13}, we obtain
\begin{align}\label{ea6.14}
\int_{\widehat{\cZ}_{xy}} \abs{v(x+z)-v(y+z)}\,\tpi(z)\,\D{z}
&\;\le\;
\abs{x-y}^{r}\dbrac{v}^{(-r)}_{r;D}\,R^{\theta-2s}\,
\int_{\widehat{\cZ}_{xy}}\abs{z}^{2s-\theta}\,\tpi(z)\,\D{z} \nonumber\\[5pt]
&\;\le\;
\abs{x-y}^{r}\dbrac{v}^{(-r)}_{r;D}\,R^{\theta-2s}\,
\int_{\widehat{\cZ}_{xy}}
\frac{\D{z}}{\abs{z}^{d}} \nonumber\\[5pt]
&\;\le\;
2\,\Tilde{\lambda}_{D}\,C_{0}\,\abs{x-y}^{r+1}
\dbrac{v}^{(-r)}_{r;D}\,R^{\theta-2s}\,R^{-1}\nonumber\\[5pt]
&\;\le\;
2\,\Tilde{\lambda}_{D}\,C_{0}\,\abs{x-y}^{\beta}\,R^{r+\theta-\beta-2s}\,
\dbrac{v}^{(-r)}_{r;D}\,.
\end{align}
In order to evaluate the integral over $\widetilde{\cZ}_{xy}(R)$,
we define
\begin{equation*}
G(z) \;\df\; \frac{\abs{v(x+z)-v(y+z)}}{\abs{x-y}^{\beta}\,
\dbrac{v}^{(-r)}_{\beta;D}}\,.
\end{equation*}
By \eqref{ea6.10} we have
\begin{equation*}
\bigl\{z\in \widetilde{\cZ}_{xy}(R)\,\colon G(z)> h\bigr\}
\;\subset\;
\bigl\{z\in \Rd\,\colon x+z\in
\widetilde{D}^{c}\bigl(h^{\frac{-1}{\beta-r}}\bigr)\bigr\}\,\cup\,
\bigl\{z\in \Rd\,\colon y+z\in
\widetilde{D}^{c}\bigl(h^{\frac{-1}{\beta-r}}\bigr)\bigr\}\,.
\end{equation*}
Therefore, by \eqref{ea6.13}, we obtain
\begin{align*}
\tpi\bigl(\bigl\{z\in \widetilde{\cZ}_{xy}(R)\,\colon G(z)> h\bigr\}\bigr)
&\;\le\; 2 R^{\theta-2s}\,
\int_{\widehat{\cZ}_{xy}}\abs{z}^{2s-\theta}\,\tpi(z)\,\D{z} \nonumber\\[5pt]
&\;\le\;
2\,\Tilde{\lambda}_{D}\,C_{0}\,R^{\theta-2s-1}
h^{\frac{-1}{\beta-r}}\,.
\end{align*}
It follows that
\begin{align}\label{ea6.15}
\int_{\widetilde{\cZ}_{xy}(R)} G(z)\,\tpi(z)\,\D{z}
&\;=\; \int_{0}^{\infty}
\tpi\bigl(\bigl\{z\in \widetilde{\cZ}_{xy}(R)\,\colon G(z)> h\bigr\}\bigr)
\,\D{h}\nonumber\\[5pt]
&\;\le\;
2\,\Tilde{\lambda}_{D}\,C_{0}\,R^{\theta-2s-1}\,
\int_{R^{r-\beta}}^{\infty}h^{\frac{-1}{\beta-r}}\,\D{h}
\nonumber\\[5pt]
&\;\le\;
\frac{2\,(\beta-r)}{1+r-\beta}\,\Tilde{\lambda}_{D}\,C_{0}\,R^{\theta-2s-1}\,
R^{1+r-\beta}\,.
\end{align}

Thus, combining \eqref{ea6.8}--\eqref{ea6.9} with
\eqref{ea6.11} in case (i), or with \eqref{ea6.12}, \eqref{ea6.14}
and \eqref{ea6.15} in case (ii), 
and using the H\"older interpolation inequalities, we obtain
\begin{equation}\label{ea6.16}
R^{2s-r-\theta}\,R^{\beta}\,
\frac{\int_{\Rd} F_{2}(x,y;z)\,\D{z}}{\abs{x-y}^{\beta}}
\;\le\; c_{4}\,\dbrac{v}^{(-r)}_{2s+\beta-\theta;D}
\end{equation}
for some constant $c_{4}$.

Therefore, by \eqref{ea6.6}, \eqref{ea6.7} and \eqref{ea6.16} we obtain
\eqref{ea6.3}, and the proof is complete.
\end{proof}

\begin{remark}
It is evident from the proof of Lemma~\ref{L6.1} that
the assumption in \eqref{ea6.2} may be replaced by the following:
There exists a constant ${M}_{D}$,
such that for all $x\in D$ and positive constants $R$ and $\varepsilon$
which satisfy
$0<\varepsilon\le R$ and $d_{x}\ge 3R$,
it holds that
\begin{equation*}
\int_{x+z\,\in D_{\varepsilon}\setminus \widetilde{D}(\varepsilon)}
\frac{\Tilde{k}(x,z)}{\abs{z}^{d-\theta}}\,\D{z}
\;\le\; M_{D}\,\frac{\varepsilon}{R}\,.
\end{equation*}
The same applies to Theorems~\ref{T3.1} and \ref{T6.1}.
\end{remark}

In order to proceed,
we need certain properties of solutions of $(-\Delta)^{s} = f$ in a bounded
domain $D$, and $u=0$ on $D^{c}$, with $f$ not necessarily in $L^{\infty}(D)$.
We start with exhibiting a suitable supersolution.

\begin{lemma}[Supersolution]\label{L6.2}
For any $q\in \bigl(s-\nicefrac{1}{2}, s\bigr)$ there exists a constant
$c_{0}\; > \; 0$ and a radial continuous function $\varphi$ such that
\begin{equation*}
\begin{cases}
(-\Delta)^{s}\varphi(x) \;\ge\; d_{x}^{q-2s}\,,
& \text{in}~B_{4}\setminus \Bar{B}_{1}\,,\\
\varphi \;=\; 0 & \text{in}~B_{1}\,,\\
0\;\le\; \varphi\;\le\; c_{0}(\abs{x}-1)^{q}
& \text{in} \quad B_{4}\setminus B_{1}\,,\\
1\;\le\; \varphi\;\le\; c_{0} & \text{in} \quad \R^{d}\setminus B_{4}\,.
\end{cases}
\end{equation*}
\end{lemma}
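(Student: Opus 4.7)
My approach is to construct $\varphi$ as a smooth cutoff of the natural power barrier $((|x|-1)_{+})^{q}$ and to estimate $(-\Delta)^{s}\varphi$ by a local reduction to the one-dimensional half-space computation.

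I start from the explicit one-dimensional formula: for $q \in (0, 2s)$ with $q \neq s$, one has $(-\partial_{t}^{2})^{s} t_{+}^{q} = \kappa(s,q)\, t^{q-2s}$ for $t > 0$, where $\kappa(s,q)$ is an explicit combination of Gamma functions whose sign agrees with $\operatorname{sgn}(s-q)$; in particular $\kappa \df \kappa(s,q) > 0$ throughout the range $q \in (s - \nicefrac{1}{2}, s)$. This is a standard Mellin transform computation, combined with the fact that $t_{+}^{s}$ is $s$-harmonic on $(0,\infty)$. Next, fix a nondecreasing $\eta \in C^{\infty}(\R)$ with $\eta \equiv 0$ on $(-\infty, 0]$, $\eta(t) = t$ on $[0, \nicefrac{1}{4}]$, and $\eta \equiv 1$ on $[1, \infty)$, and define
\begin{equation*}
\varphi(x) \;\df\; A\, \eta\!\bigl(((|x|-1)_{+})^{q}\bigr),
\end{equation*}
where $A \geq 1$ is a constant to be chosen later. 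Then $\varphi$ is nonnegative, continuous and radial, vanishes on $B_{1}$, equals $A(|x|-1)^{q}$ for $|x|$ in a fixed one-sided neighborhood of $\partial B_{1}$, and equals $A$ on $\{|x| \geq 2\}$, so the three pointwise bounds in the lemma will hold with $c_{0} = A$.

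The core of the proof is the near-boundary estimate: for $x \in B_{5/4}\setminus\Bar{B}_{1}$ with $\delta \df |x|-1$,
\begin{equation*}
(-\Delta)^{s}\varphi(x)\;\geq\;\tfrac{1}{2}\, A\, \kappa\, \delta^{q-2s}\;-\;C\,A,
\end{equation*}
for a constant $C$ depending only on $d$, $s$ and $q$. To prove this I would localize at scale $r_{0} = \sqrt{\delta}$. On $B_{r_{0}}(x)$, I replace the sphere $\partial B_{1}$ by its tangent hyperplane at $x/|x|$, incurring an error of order $r_{0}^{2} = \delta$ in the signed normal coordinate, and apply Fubini together with the one-dimensional formula above to recover the main term $A\,\kappa\,\delta^{q-2s}$ up to a multiplicative factor $1 + O(\delta^{\eta})$ for some $\eta > 0$. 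On the complement $B_{r_{0}}(x)^{c}$ the integrand is bounded in absolute value by $A/|z|^{d+2s}$, yielding a contribution of at most $C A\, r_{0}^{-2s} = C A\, \delta^{-s}$, which is subdominant to $\delta^{q-2s}$ since $q < s$.

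On the compact set $K \df \Bar{B}_{4}\setminus B_{5/4}$, $\varphi$ is $C^{\infty}$ in a uniform neighborhood and $(-\Delta)^{s}\varphi$ is a continuous function of $x$, linear in $A$; both $d_{x}^{q-2s}$ and $(-\Delta)^{s}\varphi$ are bounded on $K$. Choosing $A$ sufficiently large, if necessary after adding to $\varphi$ a smooth nonnegative radial perturbation supported in $B_{4}\setminus\Bar{B}_{1}$ to make $(-\Delta)^{s}\varphi$ pointwise positive on $K$, one obtains $(-\Delta)^{s}\varphi(x) \geq d_{x}^{q-2s}$ on $K$. Combined with the near-boundary estimate (after requiring $A\kappa \geq 2$ and using that the $CA$ term is absorbed through the blow-up of $\delta^{q-2s}$ as $\delta \to 0^{+}$), the desired inequality holds throughout $B_{4}\setminus\Bar{B}_{1}$ and the lemma follows with $c_{0} = A$. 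The main obstacle is the near-boundary estimate: the rigorous tangent-hyperplane reduction demands careful bookkeeping of curvature and truncation errors, and the lower bound $q > s - \nicefrac{1}{2}$ is precisely what ensures these remainders remain strictly subdominant to the principal singularity $\kappa\,\delta^{q-2s}$ as $\delta \to 0^{+}$.
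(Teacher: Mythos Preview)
Your approach differs substantially from the paper's. The paper does not build $\varphi$ directly on the exterior annulus; it first reduces, via the Kelvin transform, to showing that $\psi(x)=[(1-|x|)^{+}]^{q}$ satisfies $(-\Delta)^{s}\psi(x)\ge c_{1}(1-|x|)^{q-2s}$ in $B_{1}$. The passage to one dimension is not a tangent-plane approximation with curvature remainders, but the elementary pointwise inequality $(1-|z|)^{+}\le(1-|z_{1}|)^{+}$, which after integrating out the transverse variables yields an \emph{exact} one-dimensional bound. The sign of the resulting integral $A(q)$ is then obtained from $A(s)=0$ (Ros-Oton--Serra) together with $\tfrac{d}{dq}B(q)>0$ on $(s-\nicefrac{1}{2},s)$, and it is this monotonicity computation---not any remainder control---that isolates the range $q>s-\nicefrac{1}{2}$. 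The Kelvin route also sidesteps any separate ``interior'' analysis: once the $\psi$ inequality holds on all of $B_{1}$, the transformed inequality holds on all of $B_{4}\setminus\bar B_{1}$ at once.

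Your sketch has two concrete gaps. First, your explanation of where $q>s-\nicefrac{1}{2}$ enters is not correct within your own framework: the one-dimensional constant $\kappa(s,q)$ is positive for every $q\in(0,s)$, and both error terms you list (the far contribution $CA\,\delta^{-s}$ and the multiplicative curvature factor $1+O(\delta^{\eta})$) are already subdominant to $\kappa\,\delta^{q-2s}$ for all such $q$. So the hypothesis $q>s-\nicefrac{1}{2}$ is doing no identified work in your argument. Second, the interior step on $K=\bar B_{4}\setminus B_{5/4}$ is not justified: adding a nonnegative radial bump supported in $B_{4}\setminus\bar B_{1}$ does \emph{not} in general increase $(-\Delta)^{s}\varphi$ pointwise on $K$; at points where the bump is small relative to its neighbouring values its fractional Laplacian is negative, and this can worsen the inequality rather than repair it. (There is also a mismatch between your cutoff region---$\eta(t)=t$ only for $t\le\nicefrac{1}{4}$ forces the pure power $(|x|-1)^{q}$ to hold merely on $|x|-1\le 4^{-1/q}$, which for $q<1$ is strictly smaller than your claimed near-boundary zone $|x|\le\nicefrac{5}{4}$.) These are fixable, but the paper's Kelvin-plus-monotonicity argument is both shorter and avoids all three issues.
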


\begin{proof}
In view of the Kelvin transform \cite[Proposition~A.1]{RosOton-Serra}
it is enough to prove the following: for
$q\in \bigl(s-\nicefrac{1}{2}, s\bigr)$, and with
$\psi(x) \;\df\; [(1-\abs{x})^{+}]^{q}$, we have
\begin{equation}\label{ea6.17}
(-\Delta)^{s}\psi(x)\;\ge\; c_{1}\,(1-\abs{x})^{q-2s}\,,
\quad \text{for all}~ x\in B_{1}\,,
\end{equation}
for some positive constant $c_{1}$.
To prove \eqref{ea6.17} we let $x_{0}\in B_{1}$.
Due to the rotational symmetry we may assume
$x_{0}=r e_{1}$ for some $r\in (0,1)$. Denote $z=(z_{1},\dotsc, z_{d})$.
Then
\begin{align*}
-(-\Delta)^{s}\psi(x_{0}) \;&=\; c(d,\alpha)
\int_{\Rds}\bigl(\psi(x_{0}+z)-\psi(x_{0})\bigr)\frac{1}{\abs{z}^{d+2s}}\,\D{z}
\\[5pt]
& = \; c(d,\alpha)\int_{\Rds}
\Bigl(\bigl[(1-|re_{1}+z|)^{+}\bigr]^{q}-(1-r)^{q}\Bigr)
\frac{1}{\abs{z}^{d+2s}}\,\D{z}\\[5pt]
& \le \; c(d,\alpha)\int_{\Rds}
\Bigl(\bigl[(1-|re_{1}+z_{1}|)^{+}\bigr]^{q}-(1-r)^{q}\Bigr)
\frac{1}{\abs{z}^{d+2s}}\,\D{z}\\[5pt]
& = \; c_{2}\int_{\R}\Bigl(\bigl[(1-|r+z|)^{+}\bigr]^{q}-(1-r)^{q}\Bigr)
\frac{1}{\abs{z}^{1+2s}}\,\D{z}\\[5pt]
& \le \; c_{2}\int_{\R}\Bigl(\bigl[(1-r-z)^{+}\bigr]^{q}-(1-r)^{q}\Bigr)
\frac{1}{\abs{z}^{1+2s}}\,\D{z}\\[5pt]
& = \; c_{2}(1-r)^{q-2s}\int_{\R}\Bigl(\bigl[(1-z)^{+}\bigr]^{q}-1\Bigr)
\frac{1}{\abs{z}^{1+2s}}\,\D{z}
\end{align*}
for some constant $c_{2}$,
where in the first inequality we use the fact that
$(1-\abs{z})^{+}\le (1-|z_{1}|)^{+}$
and in the second inequality we use $1-\abs{z}\le 1-z$. Define
\begin{align*}
A(q) & \;\df\; \int_{\R}\Bigl([(1-z)^{+}]^{q}-1\Bigr)\frac{1}{\abs{z}^{1+2s}}\,\D{z}
\;=\;\int_{0}^{\infty}\frac{z^{q}-1}{\abs{1-z}^{1+2s}}\,\D{z}
-\int_{-\infty}^{0}\frac{1}{\abs{1-z}^{1+2s}}\,\D{z}\,,\\[5pt]
B(q) &\;\df \; \int_{0}^{\infty}\frac{z^{q}-1}{\abs{1-z}^{1+2s}}\,\D{z}\,.
\end{align*}
We need to show that $A(q)<0$ for $q$ close to $s$.
It is known that $A(s)=0$ \cite[Proposition~3.1]{RosOton-Serra}.
Therefore it is enough to show that
$B(q)$ is strictly increasing for $q\in \bigl(s-\nicefrac{1}{2}, s\bigr)$.
We have
\begin{align*}
B(q) \;&=\;\int_{0}^{1}\frac{z^{q}-1}{\abs{1-z}^{1+2s}}\,\D{z}\, + \,
\int_{1}^{\infty}\frac{z^{q}-1}{\abs{1-z}^{1+2s}}\,\D{z}\\[5pt]
&=\; \int_{0}^{1}\frac{(z^{q}-1)(1-z^{2s-1-q})}{\abs{1-z}^{1+2s}}\,\D{z}\,.
\end{align*}
Therefore, for $q\in \bigl(s-\nicefrac{1}{2}, s\bigr)$, we obtain
\begin{equation*}
\frac{dB(q)}{dq}\; =\;
\int_{0}^{1}\frac{(z^{q}-z^{2s-1-q})\log{z}}{\abs{1-z}^{1+2s}}\,\D{z}\; >\; 0\,,
\end{equation*}
where we use the fact that $\log{z}\le 0$ in $[0, 1]$. This
completes the proof.
\end{proof}

\begin{lemma}\label{L6.3}
Let $f$ be a continuous function in $D$ satisfying
$\sup_{x\in D}\,d_{x}^{\delta} |f(x)|<\infty$
for some $\delta<s$.
Then there exists a viscosity solution $u\in C(\R^{d})$ to 
\begin{equation*}
\begin{split}
(-\Delta)^{s} u(x) & \;=\;-f(x) \quad \text{in}~D\,,\\[5pt]
u& \;=\; 0 \quad \text{in}~D^{c}\,.
\end{split}
\end{equation*}
Also, for every $q< s$ we have
\begin{subequations}
\begin{align}
\abs{u(x)}&\;\le\; C_{1}\, \dbrac{f}^{(\delta)}_{0;D}\,d^{q}_{x}
\qquad\forall\,x\in\Bar{D}\,,\label{ea6.18a}\\[5pt]
\norm{u}_{C^{q}(\Bar{D})}&\;\le\; C_{1}\,\sup_{x\in D}\;
d_{x}^{\delta}\,\abs{f(x)}\,,\label{ea6.18b}
\end{align}
\end{subequations}
for some constant $C_{1}$ that depends only on $s$, $\delta$, $q$
and the domain $D$.
Moreover, since $u=0$ on $D^{c}$, it follows that the H\"{o}lder norm of
$u$ on $\R^{d}$ is bounded by the same constant.
\end{lemma}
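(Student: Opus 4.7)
My approach is to produce the solution by approximation. First, for bounded $g\in C_{b}(D)$, standard viscosity theory (Perron's method together with the comparison principle for $(-\Delta)^{s}$ and the exterior ball condition from $\partial D\in C^{2}$) yields a unique continuous viscosity solution $v\in C(\R^{d})$ to $(-\Delta)^{s} v = -g$ in $D$, $v=0$ on $D^{c}$. For the given $f$ with $\sup_{x\in D}d_{x}^{\delta}\abs{f(x)}<\infty$, I define truncations $f_{n}\df f\,\bm1_{\{x\in D\,\colon d_{x}>\nicefrac{1}{n}\}}\in C_{b}(D)$ and let $u_{n}$ be the corresponding solution. The main task is to bound $u_{n}$ uniformly in $n$ in terms of $\dbrac{f}^{(\delta)}_{0;D}$.

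The central step is the construction of a global barrier $\Phi$ built from Lemma~\ref{L6.2}. Fix $q\in\bigl(s-\nicefrac{1}{2},s\bigr)$ so that $q-2s\le-\delta$, which is feasible because $\delta<s$. The $C^{2}$ regularity of $\partial D$ supplies a uniform exterior ball condition: at each $x_{0}\in\partial D$ there is $B_{\rho}(y_{0})\subset D^{c}$ tangent to $\partial D$ at $x_{0}$, with $\rho$ independent of $x_{0}$. Translating and dilating $\varphi$ from Lemma~\ref{L6.2} so that $B_{1}$ maps to $B_{\rho}(y_{0})$ yields $\varphi_{x_{0}}\in C(\R^{d})$, nonnegative, satisfying $(-\Delta)^{s}\varphi_{x_{0}}(x)\ge c\, d_{x}^{q-2s}\ge c\, d_{x}^{-\delta}$ and $\varphi_{x_{0}}(x)\le c_{0}\,d_{x}^{q}$ on an open neighborhood $U_{x_{0}}$ of $x_{0}$ in $\bar D$, and $\varphi_{x_{0}}\ge 1$ off a larger ball. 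Covering $\partial D$ by finitely many such $U_{x_{0}}$, summing the corresponding $\varphi_{x_{0}}$, and adding a large multiple $A\,w$ of the unique solution of $(-\Delta)^{s}w=1$ in $D$ with $w=0$ on $D^{c}$, I obtain $\Phi\in C(\R^{d})$ with $\Phi=0$ on $D^{c}$, $\Phi(x)\le C_{D}\,d_{x}^{q}$ on $\bar D$, and $(-\Delta)^{s}\Phi(x)\ge d_{x}^{-\delta}$ throughout $D$: the $\varphi_{x_{0}}$ pieces handle the boundary layer where $d_{x}^{-\delta}$ is large, while $A\,w$ dominates in the interior where $d_{x}^{-\delta}$ is bounded. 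The viscosity comparison principle applied to $\pm u_{n}$ and $\dbrac{f}^{(\delta)}_{0;D}\Phi$ then gives $\abs{u_{n}(x)}\le C_{1}\dbrac{f}^{(\delta)}_{0;D}\,d_{x}^{q}$, which is \eqref{ea6.18a} for $u_{n}$, and the corresponding estimate for every smaller exponent $q$ follows from the bound $d_{x}\le\diam(D)$.

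For the H\"older bound \eqref{ea6.18b}, I combine the boundary decay from Step 2 with the standard interior H\"older estimate for $(-\Delta)^{s}$ applied on balls $B_{d_{x}/2}(x)$: rescaling $u_{n}$ by $y\mapsto u_{n}(x+d_{x}y)$ converts the weighted bound on $f$ into a scale-invariant right-hand side, and the interior estimate yields a $C^{q}$ bound on $B_{d_{x}/4}(x)$ uniform in $n$, which combined with the global boundary barrier gives a uniform $C^{q}(\R^{d})$ bound on $u_{n}$. Compactness then produces a subsequence $u_{n_{k}}$ converging uniformly on compacta of $\R^{d}$, and the limit is a viscosity solution for $f$ by the usual stability of viscosity solutions. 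The bounds \eqref{ea6.18a} and \eqref{ea6.18b} pass to the limit.

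The step I expect to be most delicate is the barrier assembly: one has to check that the finite sum of $\varphi_{x_{0}}$ plus $A\,w$ indeed satisfies the lower bound on $(-\Delta)^{s}\Phi$ uniformly on all of $D$ without cancellations. Nonnegativity of each local piece makes the nonlocal tail contributions at points outside the active neighborhood of $\varphi_{x_{0}}$ of fixed sign and bounded, so they are absorbed into the constants; the restriction $\delta<s$ is precisely what allows Lemma~\ref{L6.2}, valid only for $q<s$, to supply $(-\Delta)^{s}\varphi_{x_{0}}\ge c\,d_{x}^{-\delta}$ near the boundary.
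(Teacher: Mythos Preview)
There is a genuine gap in your barrier construction. The claim that $\Phi = \sum_j \varphi_{x_j} + Aw$ satisfies $\Phi(x) \le C_D\, d_x^q$ on $\bar D$ (and also $\Phi = 0$ on $D^c$) is false. Each local barrier $\varphi_{x_j}$, by Lemma~\ref{L6.2}, vanishes only on its own exterior ball $B_\rho(y_j)$ and in fact satisfies $\varphi_{x_j} \ge 1$ outside the enlarged ball $B_{4\rho}(y_j)$. Hence for a boundary point $x_0$ far from $x_j$, and $x \in D$ close to $x_0$, one has $\varphi_{x_j}(x) \ge 1$. The finite sum $\sum_j \varphi_{x_j}$ is therefore bounded below by a positive constant near every boundary point, so $\Phi(x)\to 0$ as $d_x\to 0$ cannot hold; the bound $\Phi(x) \le C_D\, d_x^q$ fails, and with it the derivation of \eqref{ea6.18a}. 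Your closing paragraph worries about the lower bound $(-\Delta)^s \Phi \ge d_x^{-\delta}$, but it is the \emph{upper} bound on $\Phi$ that breaks.

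The paper's proof avoids a single global barrier. It applies each $\varphi_{x_0}$ separately, comparing $u$ with a multiple of $\varphi_{x_0}$; because the comparison is nonlocal, $u$ must be dominated on the entire complement of the region where Lemma~\ref{L6.2} controls $(-\Delta)^{s}\varphi_{x_0}$, which forces the interior supremum $\sup_{K_1}|u|$ to appear on the right-hand side of the boundary estimate \eqref{ea6.19}. This is then coupled with a Lemma~\ref{L3.2}-type interior bound \eqref{ea6.20} of $\sup_{K_2}|u|$ in terms of $\sup_{D\setminus K_2}|u|$, and by choosing $K_1\subset K_2$ with $\sup_{K_2^c\cap D} d_x^q$ sufficiently small the two inequalities close, yielding $\sup_{K_2}|u| \le \kappa_3\, \dbrac{f}^{(\delta)}_{0;D}$ and hence \eqref{ea6.18a}. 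Your approximation $f_n\to f$ and the stability/limit argument are fine and could be grafted onto this corrected two-region absorption step.
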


\begin{proof}
Existence of a continuous viscosity solution follows from
Lemma~\ref{L6.2} and Perron's method,
since we can always choose $q$ close enough to $s$ in Lemma~\ref{L6.2}
so as to satisfy $2s-q>\delta$, and obtain a bound on the solution $u$.
From the barrier there exists a compact set $K_{1}\subset D$ such that
\begin{equation}\label{ea6.19}
|u(x)|\;\le\; \kappa_{1} \biggl(\sup_{x\in K_{1}}\;|u(x)|
+\dbrac{f}^{(\delta)}_{0;D}\biggr)\,d_{x}^{q}
\qquad \forall \, x\in K_{1}^{c}\,,
\end{equation}
where the constant $\kappa_{1}$ depends only on $K_{1}$ and $D$.
Also, using the same argument as in Lemma~\ref{L3.2}, we can show that
for any compact $K_{2}\subset D$, there exists a constant
$\kappa_{2}$, depending on $D$, and satisfying
\begin{equation}\label{ea6.20}
\sup_{x\in K_{2}}\;|u(x)|\;\le\; \kappa_{2}\biggl(\sup_{x\in K_{2}}\;|f(x)|
+\sup_{x\in D\setminus K_{2}}\;|u(x)|\biggr)\,.
\end{equation}
We choose $K_{2}$ and $K_{1}\subset K_{2}$ such that
$\sup_{x\in K^{c}_{2}\cap D}\, |d_{x}^{q}|<\frac{1}{2\kappa_{1}\kappa_{2}}$.
Then from \eqref{ea6.19}--\eqref{ea6.20} we obtain
\begin{equation}\label{ea6.21}
\sup_{x\in K_{2}}\;|u(x)|\;\le\; \kappa_{3}\;\dbrac{f}^{(\delta)}_{0;D}
\end{equation}
for some constant $\kappa_{3}$.
Hence the bound in \eqref{ea6.18a} follows by combining
\eqref{ea6.19} and \eqref{ea6.21}.

The estimate in \eqref{ea6.18b} is easily obtained by
following the argument in the proof of \cite[Proposition~1.1]{RosOton-Serra}.
\end{proof}

Our main result in this section is the following.

\begin{theorem}\label{T6.1}
Let $\I\in\mathfrak{I}_{2s}(\beta,\theta,\lambda)$,
$f$ be locally H\"older continuous with exponent $\beta$, and
$D$ be a bounded domain with a $C^{2}$ boundary.
We assume that neither $\beta$, nor $2s+\beta$ are integers, and
that either $\beta<s$, or that
$\beta\ge s$ and
\begin{equation*}
\abs{k(x,z)-k(x,0)}\;\le\; \Tilde{\lambda}_{D}\, \abs{z}^{\theta}\qquad
\forall x\in D\,, \;\forall z\in \Rd\,,
\end{equation*}
for some positive constant $\Tilde{\lambda}_{D}$.
Then the Dirichlet problem in \eqref{ea6.1} has a unique solution in
$C^{2s+\beta}_{\mathrm{loc}}(D)\cap C(\Bar{D})$.
Moreover, for any $r<s$, we have the estimate
\begin{equation*}
\dabs{u}^{(-r)}_{2s+\beta;D}
\;\le\; C_{0} \, \norm{f}_{C^{\beta}(\Bar{D})}
\end{equation*}
for some constant $C_{0}$ that depends only on $d$, $\beta$, $r$, $s$ and
the domain $D$.
\end{theorem}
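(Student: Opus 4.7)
The plan is to decompose $\I = \I_0 + \widetilde\I$, where
\begin{equation*}
\I_0 u(x) \;\df\; b(x)\cdot\grad u(x) + k(x,0)\int_{\Rds} \dd u(x;z)\,\frac{\D{z}}{\abs{z}^{d+2s}}
\end{equation*}
is a variable-coefficient $s$-fractional operator with a subcritical drift (subcritical because $2s>1$), and $\widetilde\I u = \frac{k(x,0)}{c(d,2s)}\calH[u]$ is the weakly H\"older perturbation controlled by Lemma~\ref{L6.1}. Existence and the asserted estimate will come from a method-of-continuity argument along $\I_t \df \I_0 + t\widetilde\I$, $t\in[0,1]$, once appropriate weighted Schauder bounds for $\I_0$ are in hand.

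The first step is to establish a weighted boundary Schauder estimate for the Dirichlet problem $\I_0 w = g$ in $D$, $w=0$ on $D^c$, of the form $\dabs{w}^{(-r)}_{2s+\beta;D} \le C_0\,\norm{g}_{C^{\beta}(\Bar{D})}$ for some fixed $r\in\bigl(\tfrac{2s-1}{2}, s\bigr)$. This rests on: \emph{(i)} the sharp $C^{2s+\beta}$ boundary regularity of $(-\Delta)^s$ on $C^{2}$ domains established in \cite{RosOton-Serra}, together with its interior Schauder counterpart; \emph{(ii)} a coefficient-freezing argument that exploits the local H\"older continuity of $k(\cdot,0)$; and \emph{(iii)} a bootstrap that moves $b\cdot\grad u$ to the right-hand side, which is admissible because $2s>1$ renders the drift subcritical relative to the principal $s$-order part.

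For the continuity argument I apply the Schauder bound just described to the right-hand side $g = -f - t\widetilde\I u$ of $\I_t u = -f$, and invoke Lemma~\ref{L6.1} to bound $\widetilde\I u$ in a weighted $C^\beta$ sense by $C\,\dabs{u}^{(-r)}_{2s+\beta-\theta;D}$. The interpolation inequalities recalled in Section~\ref{S1.1} then give
\begin{equation*}
\dabs{u}^{(-r)}_{2s+\beta-\theta;D} \;\le\; \varepsilon\,\dabs{u}^{(-r)}_{2s+\beta;D} + C_\varepsilon\,\dbrac{u}^{(-r)}_{0;D}\,,
\end{equation*}
so choosing $\varepsilon$ small enough to absorb the leading term on the left and using the $L^\infty$ bound from Lemma~\ref{L3.2} on the remainder produces an a priori estimate uniform in $t$. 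Starting from the $t=0$ case, which is handled by the Schauder theory for $\I_0$ itself, the method of continuity then yields a solution at $t=1$. Boundary continuity $u\in C(\Bar{D})$ with $u=0$ on $\partial D$ is already implicit in finiteness of $\dbrac{u}^{(-r)}_{0;D}$ (which forces $\abs{u(x)}\le C\,d_{x}^{r}$), and uniqueness within $C^{2s+\beta}_{\mathrm{loc}}(D)\cap C(\Bar{D})$ follows from the comparison argument underlying Lemma~\ref{L3.2}.

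The main obstacle is the first step: producing the variable-coefficient Schauder estimate for $\I_0$ in precisely the weighted norm $\dabs{\,\cdot\,}^{(-r)}_{2s+\beta;D}$ that pairs with Lemma~\ref{L6.1}, while simultaneously accommodating the drift term. Although each ingredient---interior and boundary Schauder for $(-\Delta)^s$, coefficient freezing, and the subcritical treatment of the drift---is classical in spirit, combining them in the weighted $C^{2}$-boundary setting, and checking that the freezing procedure genuinely yields bounds in the weighted seminorm rather than in a rougher norm, is delicate and is precisely what drives the structural restrictions on $\beta$ and $\theta$ built into Lemma~\ref{L6.1}.
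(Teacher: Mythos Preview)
Your strategy is sound in outline, but it takes a considerably harder road than the paper does, and the ``main obstacle'' you flag is in fact completely avoided there. The paper does \emph{not} develop Schauder theory for the variable-coefficient drifted operator $\I_0$. Instead it divides through by $k(x,0)/c(d,2s)$ and rewrites \eqref{ea6.1} as a fixed-point equation for the \emph{pure} fractional Laplacian,
\[
(-\Delta)^{s} u \;=\; \mathcal{T}[u] \;\df\; \frac{c(d,2s)}{k(x,0)}\bigl(-f + b\cdot\grad u\bigr) + \calH[u]\quad\text{in }D,\qquad u=0~\text{on }D^{c},
\]
so that the drift, the variable coefficient $k(x,0)$, and the perturbation $\calH$ are all pushed to the right simultaneously. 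The weighted Schauder input is then just \cite[Lemma~2.10]{RosOton-Serra} for $(-\Delta)^{s}$ together with Lemma~\ref{L6.3} (which handles right-hand sides blowing up like $d_{x}^{-\delta}$ with $\delta<s$); no coefficient freezing is needed. Existence comes from the Leray--Schauder fixed-point theorem applied to the compact map $v\mapsto u$ on $\mathscr{C}^{(-r)}_{2s+\beta-\theta}(D)$, compactness being the embedding $\mathscr{C}^{(-r)}_{2s+\beta}(D)\hookrightarrow\mathscr{C}^{(-r)}_{2s+\beta-\theta}(D)$, and the a priori bound follows from the same interpolation-and-absorption argument you describe, plus Lemma~\ref{L3.2}.

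One small mismatch in your write-up: the Schauder estimate you propose for $\I_0$ has $\norm{g}_{C^{\beta}(\Bar{D})}$ on the right, but the actual right-hand side $-f - t\,\widetilde\I u$ lives only in the weighted space $\mathscr{C}^{(2s-r-\theta)}_{\beta}(D)$ by Lemma~\ref{L6.1}, so you would need the weighted version $\dabs{w}^{(-r)}_{2s+\beta;D}\le C\,\dabs{g}^{(2s-r)}_{\beta;D}$ for your continuity argument to close. This is exactly what \cite[Lemma~2.10]{RosOton-Serra} provides for $(-\Delta)^{s}$, and is another reason reducing to the pure fractional Laplacian is the efficient move. Your method-of-continuity scheme would work once this is repaired, but the paper's Leray--Schauder route is shorter because it entirely sidesteps building Schauder theory for $\I_0$.
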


\begin{proof}
Consider the case $\beta\ge s$.
We write \eqref{ea6.1} as
\begin{equation}\label{ea6.22}
\begin{split}
(-\Delta)^{s} u(x) & \;=\;\mathcal{T}[u](x)\;\df\;\frac{c(d,2s)}{k(x,0)}\,
\bigl(-f(x) + b(x)\cdot\grad{u}(x)\bigr)+ \calH[u](x)
\quad \text{in}~D\,,\\[5pt]
u& \;=\; 0 \quad \text{in}~D^{c}\,,
\end{split}
\end{equation}
and we apply the Leray--Schauder fixed point theorem.
Also, without loss of generality, we assume $\theta<2s-1$.
We choose any $r\in(0,s)$ which satisfies
\begin{equation*}
 r\;>\;\Bigl(s -\frac{\theta}{2}\Bigr)
\vee\Bigl(1-s+\frac{\theta}{2}\Bigr)\,,
\end{equation*}
and let $v\in\mathscr{C}_{2s+\beta-\theta}^{(-r)}(D)$.
Then
$\calH[v]\in
\mathscr{C}_{\beta}^{(2s-r-\theta)}(D)$
by Lemma~\ref{L6.1}.
Since $\grad V\in \mathscr{C}_{2s+\beta-\theta-1}^{(1-r)}(D)$
and $(1-r)\wedge (2s-r-\theta) < s$ by hypothesis, then
applying Lemma~\ref{L6.3} we conclude that there exists a solution $u$ to
$(-\Delta)^{s}u = \mathcal{T}[v]$ on $D$, with $u=0$ on $D^{c}$, such that
$u\in\mathscr{C}_{0}^{(-q)}(D)$ for any $q<s$.

Next we obtain some estimates that are needed in order to apply
the Leray--Schauder fixed point theorem.
By Lemma~\ref{L6.1} we obtain
\begin{equation*}
\bdabs{\calH[v]}^{(2s-r-\nicefrac{\theta}{2})}_{0;D}
\;=\; \bdabs{\calH[v]}^{(2s-(r-\nicefrac{\theta}{2})-\theta)}_{0;D}
\;\le\;
\kappa_{1}\, \dabs{v}^{(-r+\nicefrac{\theta}{2})}_{2s-\theta;D}\,,
\end{equation*}
and similarly,
\begin{equation}\label{ea6.23}
\bdabs{\calH[v]}^{(2s-r-\nicefrac{\theta}{2})}_{\beta;D}\;\le\;
\kappa_{1}\, \dabs{v}^{(-r+\nicefrac{\theta}{2})}_{2s+\beta-\theta;D}\,,
\end{equation}
for some constant $\kappa_{1}$ which does not depend on $\theta$ or $r$.
Thus, since by hypothesis $2s-r-\nicefrac{\theta}{2}<s$
and $1-r+\nicefrac{\theta}{2}<s$,
we obtain by Lemma~\ref{L6.3} that
\begin{equation}\label{ea6.24}
\norm{u}_{C^{r}(\Rd)}\;\le\; \kappa'_{1}\,
\Bigl(\norm{f}_{C(\Bar{D})}+\dabs{\grad v}^{(1-r+\nicefrac{\theta}{2})}_{0;D}
+\dabs{v}^{(-r+\nicefrac{\theta}{2})}_{2s-\theta;D}\Bigr)\,
\end{equation}
for some constant $\kappa'_{1}$.
Also, by Lemma~2.10 in \cite{RosOton-Serra}, there exists a constant
$\kappa_{2}$, depending only on $\beta$, $s$, $r$ and $d$, such that
\begin{equation}\label{ea6.25}
\dabs{u}^{(-r)}_{2s+\beta;D}\;\le\; \kappa_{2}\,\Bigl(\norm{u}_{C^{r}(\Rd)}
+\bdabs{\mathcal{T}[v]}^{(2s-r)}_{\beta;D}\Bigr)\,.
\end{equation}
It follows by \eqref{ea6.24}--\eqref{ea6.25} that $v\mapsto u$
is a continuous map from $\mathscr{C}^{(-r)}_{2s+\beta-\theta}$ to itself.
Moreover, since $\mathscr{C}^{(-r)}_{2s+\beta}(D)$ is precompact in
$\mathscr{C}^{(-r)}_{2s+\beta-\theta}(D)$,
it follows that $v\mapsto u$ is compact.

Next we obtain a bound for $\dabs{u}^{(-r)}_{2s+\beta;D}$.
By \eqref{ea6.23} we have
\begin{align*}
\bdabs{\calH[v]}^{(2s-r)}_{\beta;D}
&\;\le\;\bigl(\diam(D)\bigr)^{\nicefrac{\theta}{2}}
 \bdabs{\calH[v]}^{(2s-r-\nicefrac{\theta}{2})}_{\beta;D}\nonumber\\[5pt]
&\;\le\; \kappa_{1}\,\bigl(\diam(D)\bigr)^{\nicefrac{\theta}{2}}\,
 \dabs{v}^{(-r+\nicefrac{\theta}{2})}_{2s+\beta-\theta;D}\Bigr)\,.
\end{align*}
Therefore, since also $2s-r>1-r+\nicefrac{\theta}{2}$, we obtain
\begin{equation}\label{ea6.26}
\bdabs{\mathcal{T}[v]}^{(2s-r)}_{\beta;D}
\;\le\; \kappa_{3}\,\Bigl(\norm{f}_{C^{\beta}(\Bar{D})}
+ \dbrac{v}^{(-r+\nicefrac{\theta}{2})}_{1;D}
+\dabs{v}^{(-r+\nicefrac{\theta}{2})}_{2s+\beta-\theta;D}\Bigr)
\end{equation}
for some constant $\kappa_{3}$.
By the H\"older interpolation inequalities, for any
$\varepsilon>0$, there exists $\widetilde{C}(\varepsilon)>0$ such that
\begin{equation}\label{ea6.27}
\dbrac{v}^{(-r+\nicefrac{\theta}{2})}_{1;D} +
\dabs{v}^{(-r+\nicefrac{\theta}{2})}_{2s+\beta-\theta;D}\;\le\;
\widetilde{C}(\varepsilon)\,\dbrac{v}^{(-r+\nicefrac{\theta}{2})}_{0;D}
+\varepsilon\,\dabs{v}^{(-r+\nicefrac{\theta}{2})}_{2s+\beta;D}\,.
\end{equation}
Combining \eqref{ea6.24}, \eqref{ea6.25}, and \eqref{ea6.26}, and then
using \eqref{ea6.27} and the inequality
\begin{equation*}
\dbrac{v}^{(-r+\nicefrac{\theta}{2})}_{2s+\beta;D}\;\le\;
\bigl(\diam(D)\bigr)^{\nicefrac{\theta}{2}}
\dabs{v}^{(-r)}_{2s+\beta;D}
\end{equation*}
we obtain
\begin{equation}\label{ea6.28}
\dabs{u}^{(-r)}_{2s+\beta;D}\;\le\; \kappa_{4}(\varepsilon)\,
\Bigl(\norm{f}_{C^{\beta}(\Bar{D})}
+\dabs{v}^{(-r+\nicefrac{\theta}{2})}_{0;D}\Bigr)
+ \varepsilon\,\dabs{v}^{(-r)}_{2s+\beta;D}\,.
\end{equation}
In order to apply the  Leray--Schauder fixed point theorem, it suffices to show
that the set of solutions $u\in \mathscr{C}^{(-r)}_{2s+\beta}(D)$ of
$(-\Delta)^{s} u(x)  \;=\;\xi\,\mathcal{T}[u](x)$, for $\xi\in[0,1]$,
with $u=0$ on $D^{c}$, is bounded in
$\mathscr{C}^{(-r)}_{2s+\beta}(D)$.
However, from the above calculations, any such solution $u$ satisfies
\eqref{ea6.28} with $v\equiv u$.
Moreover by Lemma~\ref{L3.2},
\begin{equation}\label{ea6.29}
\sup_{x\in D}\; \abs{u(x)} \;\le\; \kappa_{5}\; \sup_{x\in D}\; \abs{f(x)}
\end{equation}
for some constant $\kappa_{5}$.
We also have that
\begin{align}\label{ea6.30}
\dabs{u}^{(-r+\nicefrac{\theta}{2})}_{0;D}&
\;\le\; \varepsilon^{-r+\nicefrac{\theta}{2}}\,
\sup_{x\in D,\,d_{x}\ge\varepsilon}\; \abs{u(x)}
+ \varepsilon^{\nicefrac{\theta}{2}}\,
\sup_{x\in D,\,d_{x}<\varepsilon}\; d_{x}^{-r}\,\abs{u(x)}\nonumber\\[5pt]
&\;\le\;\varepsilon^{-r+\nicefrac{\theta}{2}}\,\sup_{x\in D}\; \abs{u(x)}
+ \varepsilon^{\nicefrac{\theta}{2}}\,
\dabs{u}^{(-r)}_{0;D}\,.
\end{align}
Choosing $\varepsilon>0$ small enough, and using \eqref{ea6.29}--\eqref{ea6.30}
on the right hand side of \eqref{ea6.28} with $v\equiv u$, we obtain
\begin{equation}\label{ea6.31}
\dabs{u}^{(-r)}_{2s+\beta;D}\;\le\; \kappa_{6}\,
\norm{f}_{C^{\beta}(\Bar{D})}
\end{equation}
for some constant $\kappa_{6}$.
Hence by the Leray--Schauder fixed point theorem the map $v\mapsto u$
given by \eqref{ea6.22} has a fixed point
$u\in\mathscr{C}^{(-r)}_{2s+\beta}(D)$, i.e.,
\begin{equation*}
(-\Delta)^{s} u(x)=\mathcal{T}[u](x)\,.
\end{equation*}
Hence, this is a solution to \eqref{ea6.1}.
Uniqueness is obvious as $u$ is a classical solution.
The bound in \eqref{ea6.31} then applies and the proof is complete.
The proof in the case $\beta<s$ is completely analogous.
\end{proof}

Optimal regularity up to the boundary can be obtained under additional hypotheses.
The following result is a modest extension
of the results in \cite[Proposition~1.1]{RosOton-Serra}.

\begin{corollary}
Let $\I\in\mathfrak{I}_{2s}(\beta,\theta,\lambda)$ with $\theta>s$,
$f$ be locally H\"older continuous with exponent $\beta$, and
$D$ be a bounded domain with a $C^{2}$ boundary.
Suppose in addition that $b=0$ and that $k$ is symmetric, i.e.,
$k(x,z)=k(x,-z)$.
Then the solution of the Dirichlet problem in
\eqref{ea6.1} is in $C^{s}(\Rd)$.
Moreover, for any $\beta<s$ we have $u\in \mathscr{C}_{2s+\beta}^{(-s)}(D)$.
\end{corollary}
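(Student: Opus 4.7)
The plan is to treat $\I$ as a perturbation of $(-\Delta)^{s}$ and invoke the sharp boundary regularity theory of \cite{RosOton-Serra}. Under $b=0$ and $k(x,z)=k(x,-z)$, exactly as in the proof of Theorem~\ref{T6.1}, the equation $\I u=-f$ is equivalent to
\begin{equation*}
(-\Delta)^{s} u \;=\; \mathcal{T}[u] \;\df\; \frac{c(d,2s)\,f(x)}{k(x,0)} + \calH[u](x)\quad\text{in } D,\qquad u=0\text{ in } D^{c}.
\end{equation*}
Moreover, the symmetry $\Tilde{k}(x,z)=\Tilde{k}(x,-z)$ eliminates the gradient correction and yields
\begin{equation*}
\calH[u](x)\;=\;\tfrac{1}{2}\int_{\Rds}\frac{u(x+z)+u(x-z)-2u(x)}{\abs{z}^{d+2s}}\,\Tilde{k}(x,z)\,\D{z}.
\end{equation*}
Theorem~\ref{T6.1} already gives $u\in\mathscr{C}^{(-r)}_{2s+\beta}(D)$ for each $r<s$; the task is to upgrade this to the endpoint $r=s$.

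First I would show $\mathcal{T}[u]\in L^{\infty}(D)$, whence $u\in C^{s}(\Rds)$ by \cite[Proposition~1.1]{RosOton-Serra}. The contribution from $f/k(\cdot,0)$ is bounded on $\Bar{D}$. For $\calH[u]$, I would split the symmetric second-difference integral at $\abs{z}\sim d_{x}$: on $\abs{z}\le d_{x}/2$ bound the increment by $\abs{z}^{2}\sup_{B_{d_{x}/2}(x)}\abs{D^{2}u}$ (controlled through the weighted norm from Theorem~\ref{T6.1}) and integrate against $\abs{z}^{-d-2s}\Tilde{k}(x,z)$ using condition (d) of Definition~\ref{D-hkernel}; on $\abs{z}>d_{x}/2$ use the boundedness of $u$. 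The condition $\theta>s$ provides the margin needed to absorb the boundary blow-up of $D^{2}u$.

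For the sharp weighted estimate $u\in\mathscr{C}^{(-s)}_{2s+\beta}(D)$ when $\beta<s$, I would apply the weighted Schauder-type estimate \cite[Lemma~2.10]{RosOton-Serra} with $r=s$:
\begin{equation*}
\dabs{u}^{(-s)}_{2s+\beta;D} \;\le\; \kappa\bigl(\norm{u}_{C^{s}(\Rds)} + \bdabs{\mathcal{T}[u]}^{(s)}_{\beta;D}\bigr).
\end{equation*}
The $f/k(\cdot,0)$ contribution is bounded by $\norm{f}_{C^{\beta}(\Bar{D})}$ since $k(\cdot,0)\in C^{\beta}(\Bar{D})$, and a variant of Lemma~\ref{L6.1} adapted to the symmetric, driftless setting with $r=s$ gives
\begin{equation*}
\bdabs{\calH[u]}^{(s)}_{\beta;D}\;\le\; M_{1}\,\dabs{u}^{(-s)}_{2s+\beta-\theta;D}.
\end{equation*}
The interpolation inequality $\dabs{u}^{(-s)}_{2s+\beta-\theta;D} \le \varepsilon\,\dabs{u}^{(-s)}_{2s+\beta;D} + C(\varepsilon)\,\dabs{u}^{(-s)}_{0;D}$, together with $\dabs{u}^{(-s)}_{0;D}\le\norm{u}_{C^{s}(\Rds)}$ (which follows from $u\equiv 0$ on $D^{c}$ and $u\in C^{s}(\Rds)$), allows absorbing the top-order term and closing the bound. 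The main obstacle will be verifying that the $\calH[u]$-estimates from Lemma~\ref{L6.1} extend to the regime $\theta>s$ relevant here; the symmetric second-difference representation above should bypass the restriction $\theta<2s-1$ that arose in the original proof from the H\"older estimate on $\grad v$, since the vanishing gradient correction replaces that step with a direct bound on a symmetric second difference whose exponent $2$ dominates any $\theta<1$.
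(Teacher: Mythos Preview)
Your plan matches the paper's route closely: both rewrite the problem as $(-\Delta)^{s} u = \mathcal{T}[u]$, argue $\mathcal{T}[u]\in L^{\infty}(D)$ via the symmetry of $\Tilde k$ to obtain $u\in C^{s}(\Rd)$ from \cite[Proposition~1.1]{RosOton-Serra}, and then upgrade to the sharp weighted space via the Schauder-type estimate for $(-\Delta)^{s}$.

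There are, however, two concrete gaps in the sketch.  First, in the $L^{\infty}$ bound for $\calH[u]$, using only the \emph{boundedness} of $u$ on $\{\abs{z}>d_{x}/2\}$ is insufficient: it yields at best $\int_{d_{x}/2<\abs{z}<1}\abs{\Tilde k(x,z)}\,\abs{z}^{-d-2s}\,\D z \lesssim d_{x}^{\theta-2s}$, which blows up at the boundary since $\theta<2s$.  The paper instead fixes $r=2s-\theta$ (which satisfies $r<s$ precisely because $\theta>s$) and uses the global $C^{r}(\Rd)$ regularity of $u$ supplied by Theorem~\ref{T6.1}: the symmetric second difference is then bounded by $2[u]_{C^{r}}\abs{z}^{r}=2[u]_{C^{r}}\abs{z}^{2s-\theta}$, and condition~(d) of Definition~\ref{D-hkernel} integrates this directly.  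Your inner-region estimate is fine, but it too needs $r\ge 2s-\theta$ so that the resulting power $d_{x}^{r-2s+\theta}$ stays bounded.

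Second, the absorption argument for $\dabs{u}^{(-s)}_{2s+\beta;D}$ is circular as written.  The estimate $\bdabs{\calH[u]}^{(s)}_{\beta;D}\le M_{1}\,\dabs{u}^{(-s)}_{2s+\beta-\theta;D}$ requires the right-hand side to be finite, but Theorem~\ref{T6.1} only controls $\dabs{u}^{(-r)}_{2s+\beta;D}$ for $r<s$, and this does \emph{not} imply finiteness of $\dabs{u}^{(-s)}_{\gamma;D}$ for $\gamma>s$ (for instance $d_{x}^{1-s}\abs{\nabla u}$ is not controlled by $d_{x}^{1-r}\abs{\nabla u}$ near $\partial D$).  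The paper avoids this by keeping $r=2s-\theta$ in the second application of the Lemma~\ref{L6.1} estimate, so that the right-hand side is $\dabs{u}^{(-r)}_{2s+\beta'-\theta;D}$, which \emph{is} finite by Theorem~\ref{T6.1}; it then invokes \cite[Proposition~1.4]{RosOton-Serra} (a genuine regularity result, not merely an a~priori inequality) to obtain $u\in\mathscr{C}^{(-s)}_{2s+\beta'}(D)$ for $\beta'\le r$, and finally iterates in $\beta'$ to reach the full exponent $\beta$.
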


\begin{proof}
By Theorem~\ref{T6.1}, the Dirichlet problem in \eqref{ea6.1} has a
unique solution in $C^{2s+\rho}_{\mathrm{loc}}(D)\cap C(\Bar{D})$,
for any $\rho<\beta\wedge s$.
Moreover, for any $r<s$, we have the estimate
\begin{equation*}
\dabs{u}^{(-r)}_{2s+\rho;D}
\;\le\; C_{0} \, \norm{f}_{C^{\beta}(\Bar{D})}\,.
\end{equation*}
Fix  $r=2s-\theta$.
Then
\begin{equation*}
\int_{R<\abs{z}<1}
\abs{z}^{r}\,\frac{\Tilde{k}(x,z)}{\abs{z}^{d+2s}}\,
\D{z} \;=\; \int_{R<\abs{z}<1}
\abs{z}^{2s-\theta}\,\frac{\Tilde{k}(x,z)}{\abs{z}^{d+2s}}\,\D{z}
\;\le\;\lambda_{D}\,.
\end{equation*}
By \eqref{ea6.5} and the symmetry of the kernel, it follows that
\begin{equation*}
\babss{\int_{R<\abs{z}} \dd{u}(x;z)\,
\frac{\Tilde{k}(x,z)}{\abs{z}^{d+2s}}}\,
\D{z} \;\le\; \kappa_{1}
\Bigl(\dbrac{u}^{(-r)}_{r;D}
+ \norm{u}_{C(\Bar{D}}\Bigr)\qquad \forall x\in D\,,
\end{equation*}
for some constant $\kappa_{1}$.
Combining this with the estimate in Lemma~\ref{L6.1} we obtain
\begin{align*}
\bdbrac{\calH[u]}^{(0)}_{0;D}
\;\le\;M_{0}\,\dabs{u}^{(-r)}_{r;D}
\;<\;\infty\,,
\end{align*}
implying that  $\calH[u]\in L^{\infty}(D)$.
It then follows by \cite[Proposition~1.1]{RosOton-Serra}
that $u\in C^{s}(\Rd)$,
and that for some constant $C$ depending only on $s$, we have
\begin{align*}
\norm{u}_{C^{s}(\Rd)}
&\;\le\;
C\,\bnorm{\mathcal{T}[u]}_{L^{\infty}(D)}\\[5pt]
&\;\le\; C\,\lambda_{D}^{-1}\,c(d,2s)
\Bigl(\norm{f}_{L^{\infty}(D)}+\norm{\calH[u]}_{L^{\infty}(D)}\Bigr)\\[5pt]
&\;\le\;
C\,\lambda_{D}^{-1}\,c(d,2s)
\Bigl(\norm{f}_{L^{\infty}(D)}+M_{0}\,\dabs{u}^{(-r)}_{r;\,D}
\Bigr)\,.
\end{align*}
Using the H\"older interpolation inequalities
we obtain from the preceding estimate that
\begin{equation*}
\norm{u}_{C^{s}(\Rd)}
\;\le\; \Tilde{C}\,\norm{f}_{L^{\infty}(D)}
\end{equation*}
for some constant $\Tilde{C}$ depending only on $s$, $\theta$,
and $\lambda_{D}$.

Applying Lemma~\ref{L6.1} once more, we conclude that
$\calH[u]\in\mathscr{C}_{\beta'}^{(s)}(D)$ for any $\beta'\leq r$, and that
\begin{equation*}
\bdabs{\calH[u]}^{(s)}_{\beta';\,D}\;\le\;
M_{1}\, \dabs{u}^{(-r)}_{2s+\beta'-\theta ;\,D}\,.
\end{equation*}
Hence, applying \cite[Proposition~1.4]{RosOton-Serra}, we obtain
\begin{align*}
\dabs{u}^{(-s)}_{2s+\beta';\,D}
&\;\le\; C_{1}\Bigl(\norm{u}_{C^{s}(\Rd)}
+\bdabs{\mathcal{T}[u]}^{(s)}_{\beta';\,D}\Bigr)
\end{align*}
for some constant $C_{1}$,
and we can repeat this procedure to reach $u\in\mathscr{C}_{2s+\beta}^{(-s)}(D)$.
\end{proof}

\medskip

\noindent\textbf{Acknowledgments.}
We thank Dennis Kriventsov for helping us in clarifying some points of his
paper \cite{Kriventsov}
and for suggesting us the paper \cite{RosOton-Serra}.
We also thank H\'{e}ctor Chang Lara and Gonzalo D{\'a}vila for their help.
The work of Anup Biswas was supported in part by an award from the Simons
Foundation (No.~197982 to The University of Texas at Austin) and in part
by the Office of Naval Research
through the Electric Ship Research and Development Consortium.
The work of Ari Arapostathis was supported in part
by the Office of Naval Research
through the Electric Ship Research and Development Consortium.
This research of Luis Caffarelli is supported by an award from NSF.

\def\cprime{$'$}

\end{document}